\pgfplotsset{compat=1.18}
\newcommand{\defeq}{\vcentcolon=}
\newcommand{\ER}{Erd\H{o}s-Rényi }
\newcommand{\1}{\mathds{1}}
\newcommand{\IR}{\mathds{R}}
\newcommand{\IP}{\mathbb{P}}
\newcommand{\Z}{\mathds{Z}}
\newcommand{\IE}{\mathds{E}}
\newcommand{\N}{\mathbb{N}}
\newcommand{\cA}{\mathcal{A}}
\newcommand{\cC}{\mathcal{C}}
\newcommand{\cE}{\mathcal{E}}
\newcommand{\cN}{\mathcal{N}}
\newcommand{\cP}{\mathcal{P}}
\newcommand{\bfC}{\mathbf{C}}
\newcommand{\bfE}{\mathbf{E}}
\newcommand{\bfG}{\mathbf{G}}
\newcommand{\bfZ}{\mathbf{Z}}
\newcommand{\bfhZ}{\widehat{\bfZ}}
\newcommand{\bfhE}{\widehat{\bfE}}
\newcommand{\bfhG}{\widehat{\bfG}}
\newcommand{\hD}{\widehat{D}}
\newcommand{\hE}{\widehat{E}}
\newcommand{\hS}{\widehat{S}}
\newcommand{\hT}{\widehat{T}}
\newcommand{\hU}{\widehat{U}}
\newcommand{\hV}{\widehat{V}}
\newcommand{\hZ}{\widehat{Z}}
\definecolor{Felix}{rgb}{0.0, 0.5, 0.0}
\newcommand{\gucci}[1]{{\color{blue} $\checkmark$}}
\theoremstyle{plain}
\newtheorem{theorem}{Theorem}[section]       
\newtheorem{definition}[theorem]{Definition}    
\newtheorem{lemma}[theorem]{Lemma}
\newtheorem{proposition}[theorem]{Proposition}
\newtheorem{remark}[theorem]{Remark}
\newtheorem{conjecture}[theorem]{Conjecture}
\title{The Offended Voter Model}
\author{Raphael Eichhorn\footnote{University of Lübeck, Ratzeburger Allee 160, 23562 Lübeck, Germany, \texttt{raphael.eichhorn@uni-luebeck.de}} , Felix Hermann\footnote{Goethe University Frankfurt, Robert-Mayer-Straße 10, 60486 Frankfurt am Main, Germany, \texttt{hermann@math.uni-frankfurt.de}, \texttt{seiler@math.uni-frankfurt.de}} , $\text{Marco Seiler}^\dagger$}
\date{\today}
\begin{document}
\maketitle
\begin{abstract}
We study a variant of the voter model on a coevolving network in which interactions of two individuals with differing opinions only lead to an agreement on one of these opinions with a fixed probability $q$.
Otherwise, with probability $1-q$, both individuals become offended in the sense that they never interact again, i.e.\ the corresponding edge is removed from the underlying network.
Eventually, these dynamics reach an absorbing state at which there is only one opinion present in each connected component of the network.
If globally both opinions are present at absorption we speak of ``segregation'', otherwise of ``consensus''.
We rigorously show that segregation and a weaker form of consensus both occur with positive probability for every $q \in (0,1)$ and that the segregation probability tends to $1$ as $q \to 0$.
Furthermore, we establish that, if $q \to 1$ fast enough, with high probability the population reaches consensus while the underlying network is still densely connected.
We provide results from simulations to assess the obtained bounds and to discuss further properties of the limiting state.\\
{\footnotesize\textit{2020 Mathematics Subject Classification --} Primary 60K35, Secondary 05C80, 60K37
	\\
	\textit{Keywords --} voter model; coevolving graph dynamics; \ER graphs, segregation}
\end{abstract}
\section{Introduction}

\paragraph*{Motivation} There is a variety of fundamental processes in society which can be modelled via stochastic dynamics evolving on complex networks.
Opinion dynamics in spatially structured populations, spread of misinformation in social media, and the course of an epidemic are only some examples that are highly relevant in applications. This led to a significant research effort in the field of random graphs, which are used to describe complex networks, as well as in the area of stochastic processes evolving on such networks.

We are especially interested in modelling opinion dynamics and how such an exchange of opinions forms a social structure and could even lead to segregation of the network into separate groups which hold different opinions. A simplistic prototype to model opinion dynamics in a spatially structured population is the well-known voter model, which was introduced by Holley and Liggett \cite{holley1975ergodic}. They already established a duality relation between the voter model and a system of coalescing random walks, which is one of the most important tools to study this model. They considered a population of infinite size and showed in particular that coexistence of distinct opinions can only occur if and only if a symmetric random walk on the underlying graph is transient. Note that coexistence was defined as existence of a non-trivial invariant measure, i.e.\ a measure under which with positive probability both opinions appear at the same time.

The initial work of Holley and Liggett inspired others to study the question of coexistence in several variants of the voter model, see for example \cite{liggett1994coexistence,sturm2008heterozy,huo2019zealot}. 
More recently Astoquillca \cite{astoquillca2024stationary} considered voter dynamics on an autonomously evolving graph structure. However, in any case this notion of coexistence can only be observed on infinite graphs since a population described by a connected graph of finite size will reach consensus eventually.

Thus, in order to model segregation it is necessary that opinions as well as the graph structure evolve simultaneously. This motivated us to consider a coevolving variant of the voter model which we call \emph{the offended voter model}.
\paragraph*{Model} We start with a given graph and a configuration of opinions, for example a complete graph where half of the individuals have one opinion and the other half has the counter-opinion. In every discrete time step we choose uniformly at random a connected pair of individuals who disagree. Then, with probability $q\in (0,1)$ these two agree on one of the two opinions, with each outcome being equally likely. On the other hand, if they do not reach agreement, they are both offended and cut ties. As a consequence, we delete the edge between these two individuals permanently and they never interact with each other again. As these dynamics progress, either consensus is reached while the population stays fully connected or the population structure ends up being disconnected such that there are multiple disjoint connected components. The components each reach consensus `locally' but potentially hold opposing opinions.

As far as we know, we are the first to treat this model in a mathematically rigorous manner. However, it has been considered before by Gil and Zanette \cite{gil2006covelution,zanette2006opinion} in a simulation based study. We provide an asymptotic lower bound on the probability that segregation occurs in the offended voter model in the sense that in the final state the induced graph is disconnected and both opinions are still present in the population. In particular, we obtain that with positive probability segregation occurs for all $q\in (0,1)$. In fact, we provide simulations which suggest that this is not only a lower bound, but the exact limit. Furthermore, the probability that in the final state both opinions are still present and of the same order is as well positive for all $q\in (0,1)$. We also show that if $q_N\to 0$ as $N\to \infty$ the induced graph ends up being disconnected with high probability and both opinions are still present with almost the same proportion as initially. On the other hand, as $q_N\to 1$ consensus is reached and the final graph is even densely connected.
\paragraph*{Challenges}
Such coevolving voter models are much harder to treat since the opinion dynamics and graph evolution depend on each other and neither is autonomously evolving. Because of that, some extremely useful properties such as monotonicity are lost. One problematic consequence of this is that the duality relation with coalescing random walks does no longer hold. One of the few works treating such models in a mathematically rigorous manner is by Basu and Sly~\cite{basu2017evolving}. They consider a closely related coevolving voter model, where the initial population structure is given by a dense \ER graph and both opinions are initially present with proportion $\tfrac{1}{2}$. In this model edges between disagreeing individuals are not deleted but rewired. Similarly as in the offended voter model, the final state is either global consensus while all individuals are still connected or segregation. A major difference between these two models is that this rewiring procedure keeps the number of edges present in the graph constant, whereas in our model the number of edges declines as time progresses. This leads to some remarkable differences which we will further discuss in Section~\ref{sec:DiscussionAndSimulation}.

\paragraph*{Structure of the paper}
The rest of the paper is organized as follows: In Section~\ref{sec:model_and_notation} we rigorously define the model and classify the absorbing states.
Section~\ref{sec:results} states our main results comprised of several asymptotic bounds for events on the final state of the offended voter model.
The discussion in Section~\ref{sec:DiscussionAndSimulation} contains a comparison of our findings to the work of Basu and Sly~\cite{basu2017evolving} as well as simulations that validate our results, give rise to several conjectures, show interesting effects and thus motivate future work. Furthermore, we discuss possible model extensions.
Section \ref{sec:construction} prepares the proofs carried out in Section \ref{sec:proofs} by providing two constructions of the model and an auxiliary process.

\section{Model and Notation}\label{sec:model_and_notation}
We denote the complete graph of size $N\in \N$ by $G^N = (\Lambda^N, E^N\big)$ where $\Lambda^N \defeq \{1, \dots, N\}$ is the vertex set and \ $E^N:=\big\{\{x,y\}\subset \Lambda^N\times \Lambda^N: x\neq y\big\}$ is the edge set. Further, we fix $q_N\in[0,1]$. We interpret every vertex $x \in \Lambda^N$ as an individual which has one out of two different opinions, either $0$ or $1$. Roughly speaking, we study the opinion dynamics in a population in which interacting individuals either successfully agree on one of their opinions, with probability $q_N$, or otherwise cease to interact.

We model this by a discrete-time Markov chain $\bfZ^N=\big(\bfZ^N_n\big)_{n \in \N_0}$ taking values in the state space $\cP (\Lambda^N \cup E^N)$ where $\cP$ denotes the power set. We say an individual $x \in \Lambda^N$ has opinion $1$ at time $n$ if $x \in \bfZ^N_n$, and opinion $0$ otherwise. Furthermore, two individuals $x$ and $y$ can interact with each other at time $n$ if $\{x, y\}\in \bfZ^N_n$, i.e.\ the connecting edge is present. 
We call an edge $\{x,y\}$ between individuals $x$ and $y$ \emph{discordant} if they do not have the same opinion and denote the set of discordant edges at time $n$ by
\begin{equation*}
	\bfE_n^{N, d} \defeq \Big\{ \{x,y\} \in \bfZ_n^N : \big|\bfZ_n^N \cap \{x,y\}\big|=1\Big\}.
\end{equation*} 
Let $\bfZ_0^N \subset \Lambda^N \cup E^N$ be the initial state, which is possibly random but independent of the model dynamics. At any time $n\geq 1$, choose a discordant edge $\{x,y\} \in \bfE_{n-1}^{N, d}$ uniformly at random and then independently, with probability
\begin{itemize}
	\item $\frac{q_N}{2}$ both individuals of the edge adopt opinion $1$, i.e. $\bfZ_n^N = \bfZ_{n-1}^N \cup \{x,y\}$,
	\item $\frac{q_N}{2}$ both individuals of the edge adopt opinion $0$, i.e. $\bfZ_n^N = \bfZ_{n-1}^N \setminus \{x,y\}$,
	\item $1-q_N$ the edge is removed, i.e. $\bfZ_n^N = \bfZ_{n-1}^N \setminus \big\{\{x,y\}\big\}$.
\end{itemize}
We refer to events of the first and second type as \emph{opinion update} and to events of the third type as \emph{deletion}. The process $\bfZ^N$ is called the \emph{offended voter model}, abbreviated to OV-Model, and $q_N \in [0,1]$ the \emph{voting probability}. Furthermore, we denote the randomly evolving population graph at time $n$ associated with the model by ${\bfG^N_n = (\Lambda^N, \bfZ_n^N \cap E^N)}$.

Observe that if at some time $n$ there are no longer any discordant edges left, i.e.\ $\bfE_n^{N, d}=\emptyset$, then the process stays constant and has thus entered an absorbing state. 
Note that such a state is reached almost surely in finite time. For $q_N<1$ this follows since the number of edges present in $\bfG^N_n$ is decreasing in time. For $q_N=1$ we obtain a standard voter model on a finite graph which is known to find local consensus.
We denote the final state by
\begin{equation*}
	\bfZ_\infty^N = \lim_{n \to \infty} \bfZ_n^N
\end{equation*}
and the associated population graph by $\bfG^N_\infty$.

Let us recall some basic notions for graphs. Let $(\Lambda,E)$ be an arbitrary graph. We call $x,y\in \Lambda$ \emph{neighbours} if $\{x,y\}\in E$. We say $x$ is \emph{connected} to $y$ if there exists a sequence $x=z_0,\dots, z_n=y$ with $\{z_{i-1},z_i\}\in E$ for all $i\in \{1,\dots, n\}$. We call a graph $(\Lambda,E)$ \emph{connected} if every two vertices $x,y\in \Lambda$ are connected and otherwise we call the graph \emph{disconnected}.

Next, we introduce some notation for relevant quantities of our model. We denote the number of individuals of opinion $1$ by
\begin{equation*}
	Z^N_n = \big\lvert \bfZ_n^N \cap \Lambda^N \big\rvert \in \{0,\ldots, N\}
\end{equation*}
as well as the number of individuals carrying the minority opinion by
\begin{equation*}
	Z_n^{N, \textnormal{min}} = Z^N_n \wedge \big(N - Z^N_n\big).
\end{equation*}
Finally, the minimal degree of $\bfG_n^N$ is denoted by
\begin{equation*}
	D_n^{N, \textnormal{min}} = \min_{x \in \Lambda^N} | \{y \in \Lambda^N : \{x,y\} \in \bfZ_n^N \} | .    
\end{equation*}
There are three possible outcomes for $\bfZ^N_\infty$ described by the following definition.
\begin{definition}[Limit classification]\label{def:LimitClassicifaction}
	\begin{enumerate}
		\item $\mathcal{A}_N \defeq\big\{\bfG_\infty^N$ is disconnected and $Z_\infty^{N, \textnormal{min}}>0\}$ is called the event of \textbf{segregation}, i.e.\ the event that the final graph is disconnected and there is no global consensus.
		\item $\mathcal{B}_N \defeq \big\{\bfG_\infty^N$ is disconnected and $Z_\infty^{N, \textnormal{min}}=0 \big\}$ is the event of \textbf{disconnected consensus}, i.e.\ even though the final graph is disconnected all individuals share the same opinion.
		\item $\mathcal{C}_N \defeq \big\{\bfG_\infty^N$ is connected and $Z_\infty^{N, \textnormal{min}}=0 \big\}$ is the event of \textbf{(connected) consensus}.
	\end{enumerate}
\end{definition}

\begin{remark}
	\begin{enumerate}
		\item $\bfG_\infty^N$ being connected and $Z_\infty^{N, \textnormal{min}}>0$ is not possible, since this would imply that there still exists a discordant edge $\{x,y\}\in \bfE_{\infty}^{N, d}$. However, we already concluded that $\bfE_{\infty}^{N, d}=\emptyset$ must hold. It follows that $\mathcal A_N,\mathcal B_N$ and $\mathcal C_N$ are disjoint and satisfy
		$\IP_N(\mathcal{A}_N \cup \mathcal{B}_N \cup \mathcal{C}_N)=1$.
		\item A disconnected graph can be decomposed into connected components. Thus, even though there is no ``global'' consensus on the event $\cA_N$ the population must have reached a local consensus, which means that every connected component of $\bfG_\infty^N$ is in consensus. This follows by the absence of discordant edges in $\bfG_\infty^N$. 
	\end{enumerate}
\end{remark}

As a last bit of notation we introduce an event of a weaker form of connected consensus. Let $\bfC^{N}_{\max}$ denote the largest connected component of $\bfG_{\infty}^N$. Then, for any $\varepsilon \in \left( 0, \frac{1}{2}\right)$, we define $\mathbf\varepsilon$\emph{-consensus} or \emph{almost-consensus} as
\begin{align*}
	\mathcal{C}^\varepsilon_N \defeq \big\{ &Z_\infty^{N, \textnormal{min}} \leq \varepsilon N  \text{ and } |\bfC^{N}_{\max}|\geq (1-\varepsilon)N \big\}.
\end{align*}
It holds that $\mathcal{C}_N \subset \mathcal{C}^{\varepsilon_1}_N \subset \mathcal{C}^{\varepsilon_2}_N$ for any $\varepsilon_1 < \varepsilon_2 \in \left(0, \frac{1}{2} \right)$ and $N \in \N$.

Our main focus of this paper is the behaviour of the probabilities of the events $\mathcal{A}_N, \mathcal{B}_N$, $\mathcal{C}^\varepsilon_N$ and $\mathcal{C}_N$ as $N \to \infty$.

\section{Main Results}\label{sec:results}

Our primary interest lies in the scenario when the initial state is a complete graph, where each opinion is equally abundant among the population. This scenario will satisfy the conditions of all our results. However, our methods allow for generalisations in various respective directions.

Before we state our main results, we want to highlight one observation which will be crucial for our proof techniques. 
If one only considers $Z^N=(Z^N_n)_{n\geq 0}$, the total number of individuals of opinion $1$, then $Z^N$ is a delayed simple symmetric random walk until absorption takes place, i.e.\ $\bfE^{N,d}_n=\emptyset$ for the first time. This is a direct consequence of the fact that the ``winning'' opinion in every opinion update is determined by an independent toss of a fair coin.

Note that this is not only the case for the OV-Model, but also for the standard voter model regardless of the underlying graph structure. Hence, consensus of a population of size $N$ described by a standard voter model where initially $\big\lfloor \frac{N}{2} \big\rfloor$ individuals hold opinion $1$, can be identified by the first hitting time $\tau_N$ of the set $\{0, N\}$ of a simple random walk started in $\big\lfloor \frac{N}{2} \big\rfloor$.

Even though this is not the case for the OV-Model, since the underlying population graph can end up being disconnected before the population has reached consensus, the asymptotic behaviour of a simple random walk is still relevant for our first result. Thus, in order to state our first result we cite the following fact about the asymptotic behaviour of the distribution of $\tau_N$:
\begin{equation}\label{eq:spitzer}
	\lim_{N \to \infty} \IP \bigl( \tau_N > N^2 x\bigr) = \frac{4}{\pi} \sum_{j=0}^\infty \frac{(-1)^j}{2j+1}e^{-\frac{\pi^2}{2}(j+1)^2 x} =: \beta(x)
\end{equation}
for any $x>0$. A proof of \eqref{eq:spitzer} can be found in the book of Spitzer \cite{Spitzer1964} (result \textbf{P5} on page 244). Note that it is straightforward to check that the function $\beta : [0, \infty ) \to [0,1]$ is continuous and decreasing in $x$.

Our first result provides an asymptotic lower bound in terms of $\beta$ on the probability of $\cA_N$ as $N\to \infty$, i.e.\ the event of segregation.
\begin{theorem}\label{thm:split}
	Let $q_N = q \in [0,1)$ be constant and let the initial opinions satisfy $Z_0^{N, \textnormal{min}} \geq c N$ with fixed $c \in \left[ 0, \frac{1}{2}\right]$ for all $N$. Then, we get that for any $c' < c$ it holds that
	\begin{align*}
		\liminf_{N \to \infty} \IP \left( Z_{\infty}^{N,\textnormal{min}} > c'N\right) \geq \beta \left( \frac{q}{2(1-q)} \frac{1}{4(c-c')^2}\right) .
	\end{align*}
	This implies that
	\begin{align}
		\liminf_{N \to \infty} \IP \left( \mathcal{A}_N \right) \geq \beta \left( \frac{q}{2(1-q)} \frac{1}{4c^2}\right). \label{eq:split}
	\end{align}
\end{theorem}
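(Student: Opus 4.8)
The plan is to upgrade the observation stated just above the theorem — that $Z^N=(Z^N_n)_n$ runs as a delayed simple symmetric random walk until absorption — into a quantitative statement, by first capping the number of genuine $\pm1$ steps that this walk can make before the OV-Model absorbs, and then invoking the limit law \eqref{eq:spitzer}. The first move is to decouple the dynamics into two independent i.i.d.\ sources of randomness: at every step, independently of the past, the step is an opinion update with probability $q$ and a deletion with probability $1-q$, and, conditioned on it being an update, the increment of $Z^N$ is a fair $\pm1$ sign, again independent of everything else. Hence the model can be realised from a sequence $(\xi_i)_{i\ge1}$ of step types (update with probability $q$) and an independent sequence $(\varepsilon_j)_{j\ge1}$ of fair signs recording the outcomes of successive updates, both independent of $\bfZ_0^N$ and of the discordant edges chosen. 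If $T$ denotes the absorption time and $U_T$ the number of updates performed up to $T$, then $Z_\infty^N=Z_0^N+\sum_{j=1}^{U_T}\varepsilon_j=S_{U_T}$, where $S$ is a simple symmetric random walk started at $Z_0^N$. The point of this bookkeeping is that the event ``$S$ stays in a fixed interval for its first $m$ steps'' is measurable with respect to $(\varepsilon_j)$, so it may be intersected with $\{U_T\le m\}$ using only $\IP(A\cap B)\ge\IP(A)+\IP(B)-1$; no independence between $U_T$ and $(\varepsilon_j)$ is needed or claimed.

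Next I would produce a high-probability cap of order $N^2$ on $U_T$. Each deletion destroys one of the $\binom N2$ edges, so the process must have absorbed by the time of the $\binom N2$-th deletion; therefore $T\le M$, where $M$ is the index of the $\binom N2$-th deletion in $(\xi_i)$, and hence $U_T\le M-\binom N2=:U'$. Since $U'$ is a sum of $\binom N2$ i.i.d.\ shifted-geometric variables of mean $\tfrac{q}{1-q}$, Chebyshev's inequality gives $U'/N^2\to\tfrac{q}{2(1-q)}$ in probability; thus for every $\delta>0$, setting $m_\delta:=\lceil(1+\delta)\tfrac{q}{2(1-q)}N^2\rceil$, we have $\IP(U_T\le m_\delta)\to1$.

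Then comes the trapping step. Because $Z_0^{N,\textnormal{min}}\ge cN$ forces $cN\le Z_0^N\le(1-c)N$, the interval of radius $(c-c')N$ about $Z_0^N$ is contained in $(c'N,(1-c')N)$; so on the event $B_N$ that $S$ does not move by $(c-c')N$ within its first $m_\delta$ steps one has $S_k\in(c'N,(1-c')N)$ for all $k\le m_\delta$, and intersecting with $\{U_T\le m_\delta\}$ then forces $Z_\infty^{N,\textnormal{min}}>c'N$. After an affine change of coordinates, $\IP(B_N)$ equals $\IP(\tau_{N'}>m_\delta)$ for a suitable $N'$ with $N'/N\to2(c-c')$, and since $m_\delta/(N')^2\to(1+\delta)\tfrac{q}{2(1-q)}\cdot\tfrac1{4(c-c')^2}$, the monotonicity of $\IP(\tau_{N'}>\cdot)$ together with \eqref{eq:spitzer} and the continuity of $\beta$ yields $\IP(B_N)\to\beta\big((1+\delta)\tfrac{q}{2(1-q)}\tfrac1{4(c-c')^2}\big)$. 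Combining the two limits via $\IP(A\cap B)\ge\IP(A)+\IP(B)-1$ and letting $\delta\downarrow0$ (continuity of $\beta$ once more) gives the first inequality. The implication \eqref{eq:split} is then immediate: at absorption there are no discordant edges, so $Z_\infty^{N,\textnormal{min}}>0$ already forces $\bfG_\infty^N$ to be disconnected, i.e.\ $\cA_N=\{Z_\infty^{N,\textnormal{min}}>0\}$ up to a null set, and one takes $c'=0$ when $c>0$ (the case $c=0$ being trivial, as $\beta(\infty)=0$).

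The step I expect to be the main obstacle is producing the $O(N^2)$ cap on $U_T$: the whole argument hinges on converting the mere finiteness of the edge set into a genuinely useful, high-probability bound on the number of opinion updates, since this is precisely what places $U_T$ in the diffusive window where \eqref{eq:spitzer} is the relevant asymptotic. The decoupling also needs to be carried out with some care, so that $U_T$ — which depends on the full trajectory, and hence is not independent of $(\varepsilon_j)$ — can still be combined with the $(\varepsilon_j)$-measurable trapping event without any spurious independence assumption.
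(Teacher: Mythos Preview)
Your proposal is correct and follows essentially the same route as the paper's proof. The paper also decouples into an i.i.d.\ type sequence $(U_n^N)$ and sign sequence $(V_n^N)$, extends these beyond absorption, and uses the inclusion \eqref{eq:SuffSegregationCrit}; its Lemma~\ref{lem:vot_del_moves} is precisely your negative-binomial cap on the number of opinion updates (the paper verifies Lyapunov's condition for a CLT rather than invoking Chebyshev, but either suffices), and the final step---applying Spitzer's asymptotic on the rescaled interval and sending the slack parameter to $1$---is identical to your $\delta\downarrow0$ argument.
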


This in particular implies that segregation can always take place with positive probability for every constant $q$.
Note that this result is independent of the initial graph structure.

Furthermore, we can show that if $q_N\to 0$, segregation takes place with high probability and the total numbers of both opinions are almost of the same size as initially.
\begin{theorem}\label{thm:split2}
	Let $(q_N)_{N\in \N} \subset (0,1)$ with $q_N\to 0$ and let the initial opinions satisfy $Z_0^{N,\textnormal{min}}\geq cN$ with fixed $c\in[0,\frac12]$ for all $N$. Then, it holds for any $c'<c$ that
	\begin{equation*}
		\lim_{N \to \infty} \mathbb{P} (\mathcal{A}_N) = 1 \quad  \text{and} \quad   \lim_{N \to \infty} \IP \left( Z_{\infty}^{N, \textnormal{min}} > c'N\right) =1
		.
	\end{equation*}
\end{theorem}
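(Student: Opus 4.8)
The plan is to reduce both statements to Theorem \ref{thm:split}, exploiting that $\beta(x) \to 1$ as $x \to 0^+$ together with monotonicity of $\beta$. For the second claim, fix $c' < c$ and pick any $c'' \in (c', c)$. By Theorem \ref{thm:split} applied with the constant voting probability replaced — after a short argument, see below — by $q_N$, we would like to say
\begin{equation*}
	\IP\!\left(Z_\infty^{N,\textnormal{min}} > c'N\right) \geq \beta\!\left(\frac{q_N}{2(1-q_N)}\,\frac{1}{4(c-c')^2}\right) - o(1),
\end{equation*}
and since $q_N \to 0$ forces the argument of $\beta$ to tend to $0$, continuity of $\beta$ at $0$ with $\beta(0) = \tfrac{4}{\pi}\sum_{j\geq 0}\frac{(-1)^j}{2j+1} = 1$ gives the limit $1$. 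The first claim follows because $\{Z_\infty^{N,\textnormal{min}} > c'N\} \subset \cA_N$ whenever $c' \geq 0$: a positive minority at absorption rules out both $\cB_N$ and $\cC_N$ by the classification in the remark after Definition \ref{def:LimitClassicifaction}, hence forces $\cA_N$.

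The subtlety — and the main obstacle — is that Theorem \ref{thm:split} is stated for a \emph{constant} $q$, whereas here $q_N$ varies with $N$. One cannot directly quote it. I see two ways around this. The clean option is to inspect the proof of Theorem \ref{thm:split} (carried out in Section \ref{sec:proofs}) and observe that the lower bound in fact holds for each fixed $N$ before taking a limit, i.e.\ there is a non-asymptotic inequality $\IP(Z_\infty^{N,\textnormal{min}} > c'N) \geq \beta\big(\tfrac{q_N}{2(1-q_N)}\tfrac{1}{4(c-c')^2}\big) - \varepsilon_N$ with $\varepsilon_N \to 0$ uniformly over the relevant range of $q_N$; this is typically what the coupling-with-a-delayed-random-walk argument produces, since the coupling error and the convergence in \eqref{eq:spitzer} are both controlled by $N$ alone. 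If such uniformity is not transparent, the alternative is a monotone-coupling / domination argument: for any fixed $q \in (0,1)$, eventually $q_N \leq q$, and intuitively a smaller voting probability makes segregation more likely (deletions happen more often, so the opinion-count random walk has fewer steps to hit $\{0,N\}$); making this precise requires coupling the OV-Model at level $q_N$ with the one at level $q$ so that $Z_\infty^{N,\textnormal{min}}$ is stochastically larger in the former. Then $\liminf_N \IP(Z_\infty^{N,\textnormal{min}} > c'N) \geq \beta\big(\tfrac{q}{2(1-q)}\tfrac{1}{4(c-c')^2}\big)$ for every $q$, and letting $q \to 0$ along this bound yields $1$.

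I expect the domination route to be slightly delicate because the OV-Model lacks monotonicity in general (as the authors stress in the ``Challenges'' paragraph), so a naive coupling may not work; the safest path is therefore to extract the explicit finite-$N$ estimate from the proof of Theorem \ref{thm:split}. Once that estimate is in hand, the remaining steps — verifying $\beta(0)=1$ via the Leibniz series, invoking continuity and monotonicity of $\beta$, and the set inclusion $\{Z_\infty^{N,\textnormal{min}}>c'N\}\subset\cA_N$ — are routine. Finally, one notes that the case $c=0$ is vacuous (the second statement is trivially true and the first is not claimed), so we may assume $c>0$ and $c' \in [0,c)$, ensuring $(c-c')^2 > 0$ so that the argument of $\beta$ is well-defined and finite.
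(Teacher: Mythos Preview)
Your overall strategy---re-running the proof of Theorem~\ref{thm:split} with $q_N$ in place of a constant $q$ and using $\beta(x)\to 1$ as $x\to 0$---is exactly what the paper does. However, neither of the two specific routes you outline quite closes the argument, and the paper uses a third (simpler) device you do not mention.

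Your Option~1 posits a non-asymptotic inequality $\IP(Z_\infty^{N,\textnormal{min}}>c'N)\geq \beta\bigl(\tfrac{q_N}{2(1-q_N)}\tfrac{1}{4(c-c')^2}\bigr)-\varepsilon_N$ with $\varepsilon_N\to 0$. But the bound in the proof of Theorem~\ref{thm:split} comes from Lemma~\ref{lem:spitzer}, which is a limit statement for \emph{fixed} $x$; no uniformity in $x$ is established, so plugging in $x_N\to 0$ directly is not justified. Your Option~2 (monotone coupling in $q$) you yourself flag as dubious, and indeed the paper only conjectures monotonicity of $q\mapsto\IP_q(\cA_N)$; it is not available as a tool.

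The paper's fix is a trivial monotonicity at the level of the random walk, not the OV-Model: one first shows (via Lemma~\ref{lem:vot_del_moves}, which is already formulated for $q_N\to 0$ using $\alpha_N=\max\{q_N,1/N\}$) that with high probability the number of opinion updates before absorption is at most $\lceil r\tfrac{\alpha_N}{1-q_N}\tfrac{N^2}{2}\rceil$. Since $\alpha_N/(1-q_N)\to 0$, this is eventually $\leq \lceil r^\ast N^2\rceil$ for \emph{any fixed} $r^\ast>0$. Because the probability that a simple random walk stays inside an interval is decreasing in the number of steps, one gets $\liminf_N \IP(Z_\infty^{N,\textnormal{min}}>c'N)\geq \beta\bigl(\tfrac{r^\ast}{4(c-c')^2}\bigr)$ via Lemma~\ref{lem:spitzer} with the fixed argument $r^\ast$. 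Then send $r^\ast\to 0$. This avoids both the uniformity issue and any coupling of OV-Models at different $q$. Your concluding steps (the inclusion $\{Z_\infty^{N,\textnormal{min}}>c'N\}\subset\cA_N$ and $\beta(0)=1$) are correct.
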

This result as well is independent of the initial graph. Similarly to Theorem~\ref{thm:split}, we obtain a lower bound for the probability of $\varepsilon$-consensus in terms of $\beta$.
\begin{proposition}\label{prop:almost_consensus}
	Fix $q_N=q\in[0,1)$ and start the offended voter model with any sequence of initial configurations $\big(\bfZ_0^N\big)_{N \in \N}$ such that the initial graph is the complete graph. Then, for any $0 < \varepsilon < \frac{1}{2}$, 
	\begin{equation*}
		\liminf_{N \to \infty} \IP (\mathcal{C}^\varepsilon_N) \geq 1 - \beta \left(\frac{q}{1-q}\varepsilon (1-\varepsilon)\right) > 0.
	\end{equation*}
\end{proposition}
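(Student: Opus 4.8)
The plan is to exploit the key observation that $Z^N = (Z^N_n)_{n \geq 0}$ is a delayed simple symmetric random walk until absorption. Concretely, let $(S_k)_{k \geq 0}$ be a simple symmetric random walk on $\Z$ started from $S_0 = Z_0^N$, where the $k$-th step corresponds to the $k$-th \emph{opinion update} in the OV-Model (deletions do not move $Z^N$). The event $\cC^\varepsilon_N$ — on which the minority opinion survives with at most $\varepsilon N$ individuals and the largest component has at least $(1-\varepsilon)N$ vertices — should follow from the \emph{complement} of a bad event: namely, that before the random walk $S$ has a chance to exit the interval $(\varepsilon N, (1-\varepsilon)N)$ (equivalently, before $Z^{N,\text{min}}$ drops below $\varepsilon N$), enough opinion updates have occurred that the population is essentially driven to one-sided, or the graph has remained dense enough to be connected on the large component. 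The idea is that because we start from the complete graph, edge deletions are initially very rare relative to opinion updates: each discordant-edge selection is a deletion with probability $1-q$, but there are order $N^2$ edges, so it takes order $N^2$ deletions to damage connectivity, whereas a symmetric random walk started near $N/2$ reaches $\varepsilon N$ after order $N^2$ steps — and with the right constant the random-walk event wins.

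The main steps I would carry out, in order: (i) Bound the number of opinion updates needed for $Z^{N,\text{min}}$ to fall to $\varepsilon N$, i.e.\ relate the event $\{Z_\infty^{N,\text{min}} \le \varepsilon N\}$ to a hitting event for $S$; by the reflection-type identities and the scaling in \eqref{eq:spitzer}, the probability that $S$ started from $\lfloor N/2 \rfloor$ exits $[\varepsilon N, (1-\varepsilon)N]$ — hence that the minority shrinks below $\varepsilon N$ at \emph{some} point — within a number of steps comparable to $N^2$ can be read off from $\beta$ evaluated at an argument proportional to $\varepsilon(1-\varepsilon)$; this is where the term $\beta\!\left(\tfrac{q}{1-q}\varepsilon(1-\varepsilon)\right)$ enters. (ii) Control the edge count: up to the (random) time $\sigma$ at which $Z^{N,\text{min}}$ first equals $\lceil \varepsilon N\rceil$, argue that the number of deletions is, with high probability, small enough (sublinear in $N^2$, or at least leaving the induced subgraph on the majority vertices connected) — here one uses that the ratio of deletions to opinion updates is geometric with parameter $q$, and a concentration/Chernoff bound on the number of deletions among the first $O(N^2)$ steps. (iii) Conclude that on the intersection of these two good events, once the minority has been squeezed to at most $\varepsilon N$, the induced graph on the remaining at least $(1-\varepsilon)N$ majority vertices is still connected (a near-complete graph minus $o(N^2)$ edges on $(1-\varepsilon)N$ vertices is connected with high probability), so it forms a single component of size $\geq (1-\varepsilon)N$, giving $\cC^\varepsilon_N$. (iv) Combine with a union bound and take $\liminf$, matching the constants so the surviving lower bound is $1 - \beta\!\left(\tfrac{q}{1-q}\varepsilon(1-\varepsilon)\right)$, which is strictly positive since $\beta$ is continuous, decreasing, and $\beta(0)=1$ with $\beta(x)<1$ for $x>0$.

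The hard part will be step (ii)–(iii): making the connectivity argument genuinely rigorous inside the coevolving dynamics, because edge deletions are not chosen uniformly over all remaining edges but only over \emph{discordant} edges, so the deletion process is correlated with the opinion configuration and one cannot directly invoke a clean \ER-connectivity threshold. I expect one must instead couple or dominate the set of deleted edges by something tractable — e.g.\ bound the total number of discordant-edge selections before time $\sigma$ using the random-walk structure, then note that each such selection removes at most one edge, and that all removed edges are incident to at least one vertex that was in the minority at the time of removal; restricting attention to the vertices that are in the majority throughout, the edges among them are never discordant after $\sigma$ and were deleted only while one endpoint was still minority, which caps their number and lets one show the majority-induced graph stays connected. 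Getting the exact constant $\tfrac{q}{1-q}$ in front of $\varepsilon(1-\varepsilon)$ right is the remaining bookkeeping: it comes from converting ``number of steps of $S$'' into ``number of discordant-edge selections'' via the factor $1/q$ (fraction that are opinion updates) and then applying the $N^2$-scaling of \eqref{eq:spitzer}.
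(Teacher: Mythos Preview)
There is a genuine gap in steps (iii)--(iv). You argue about the state at time $\sigma$, the first moment $Z^{N,\textnormal{min}}$ reaches $\lceil\varepsilon N\rceil$, but $\mathcal{C}^\varepsilon_N$ is an event about the \emph{final} state $\bfZ_\infty^N$. After $\sigma$ the dynamics continue: the minority can climb back above $\varepsilon N$, many further edges can be deleted, and absorption may occur only much later in a completely different configuration. Nothing in your outline connects the state at $\sigma$ to the state at $\tau_{\textnormal{abs}}$, so the conclusion $\mathcal{C}^\varepsilon_N$ does not follow. Two smaller issues compound this: the number of deletions by time $\sigma$ is not sublinear in $N^2$ but $\Theta(N^2)$ (it is roughly $\tfrac{1-q}{q}$ times the $\Theta(N^2)$ opinion updates needed to exit $(\varepsilon N,(1-\varepsilon)N)$), so the near-complete-graph heuristic you invoke in (iii) is unfounded; and the exit time from an interval of width $(1-2\varepsilon)N$ gives a $\beta$-argument scaled by $(1-2\varepsilon)^{-2}$, not by $\varepsilon(1-\varepsilon)$, so the constant in (i) does not fall out of your forward-time construction either.

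The paper avoids all forward tracking by reasoning backward from the absorbing state. On $(\mathcal{C}^\varepsilon_N)^c$ at least $\varepsilon(1-\varepsilon)N^2$ edges must be missing from $\bfG_\infty^N$: if $Z_\infty^{N,\textnormal{min}}>\varepsilon N$ then all $Z_\infty^{N,\textnormal{min}}\bigl(N-Z_\infty^{N,\textnormal{min}}\bigr)\geq\varepsilon(1-\varepsilon)N^2$ cross-opinion edges are gone, and if $|\bfC^N_{\max}|<(1-\varepsilon)N$ a count of inter-component pairs gives the same bound. Since each step is independently a deletion with probability $1-q$ and an opinion update with probability $q$, this many deletions forces, with high probability via the negative-binomial concentration of Lemma~\ref{lem:vot_del_moves} with the roles of $S^{N,\textnormal{op}}$ and $S^{N,\textnormal{del}}$ swapped, at least $r\tfrac{q}{1-q}\varepsilon(1-\varepsilon)N^2$ opinion updates by absorption, for any $r<1$. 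Because the opinion walk cannot reach $\{0,N\}$ strictly before absorption, its boundary-hitting time in opinion-update steps is therefore at least that large, an event of asymptotic probability at most $\beta\bigl(r\tfrac{q}{1-q}\varepsilon(1-\varepsilon)\bigr)$ by Lemma~\ref{lem:non_centered_hitting_times}. Letting $r\to1$ gives the bound, with no intermediate-time connectivity analysis required.
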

Taking the limit of $q\to1$ and then $\varepsilon\to0$, this lower bound tends to $1$. Heuristically, since $\mathcal{C}_N^\varepsilon\to \mathcal{C}_N$ as $\varepsilon\to0$, this suggests that consensus occurs with probability tending to $1$ if $q_N\to1$ as $N\to\infty$. Our final result makes this observation rigorous when $q\to1$ fast enough. In particular, it shows then that with high probability consensus is reached and the final graph is densely connected.
\begin{theorem}\label{thm:consensus}
	Let $\kappa \in (0,1)$ and $q_N$ be chosen such that $(1-q_N) \in O(N^{-\delta})$ for some $\delta > 0$. Start the dynamics on the complete graph and arbitrary $Z_0^N \in [0,N]$. Then
	\begin{equation*}
		\lim_{N \to \infty} \IP \bigl( \mathcal{C}_N \text{ and } D^{N, \textnormal{min}}_\infty \geq \kappa N \bigr) = 1 .
	\end{equation*}
\end{theorem}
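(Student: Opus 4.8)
The plan is to run a bootstrap on two stopping times and reduce the statement to two tail bounds: that deletions are rare in aggregate, and that they are spread out enough that no vertex loses too many edges. Write $T_{\mathrm{abs}} := \min\{n : \bfE_n^{N,d} = \emptyset\}$ for the absorption time, $\sigma := \min\{n : D_n^{N,\textnormal{min}} < \kappa N\}$, $\rho := \min\{n : \text{at least } \eta N^2 \text{ deletions have occurred by time } n\}$ with $\eta := \kappa^2/8$, and $\theta := T_{\mathrm{abs}} \wedge \sigma \wedge \rho$. On $\{\theta = T_{\mathrm{abs}}\}$ the minimal degree never drops below $\kappa N$ and at most $\eta N^2$ edges are ever deleted; since separating $\bfG_\infty^N$ into two parts while every vertex keeps degree $\ge \kappa N$ requires deleting more than $\kappa^2 N^2 \ge \eta N^2$ cross-edges, $\bfG_\infty^N$ must be connected, and being free of discordant edges it is in consensus, so $\cC_N$ holds and $D_\infty^{N,\textnormal{min}} \ge \kappa N$. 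Hence it suffices to prove $\IP(\rho \le T_{\mathrm{abs}}) \to 0$ and $\IP(\sigma \le T_{\mathrm{abs}}) \to 0$.

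For the aggregate bound, recall that the opinion-$1$ count $Z^N$ moves as a simple random walk at each opinion update, so the number $O$ of opinion updates is at most the hitting time $\tau_N'$ of $\{0,N\}$ by this walk started from $Z_0^N$, with $\IE[\tau_N'] = Z_0^N(N - Z_0^N) \le N^2/4$; by Markov, $\tau_N' \le N^2 \log N$ with high probability. Each step is, independently of the opinion dynamics, a deletion with probability $1 - q_N = O(N^{-\delta})$, and having more than $\eta N^2$ deletions forces the partial sums of these i.i.d.\ indicators to reach $\eta N^2$ within the first $\tau_N' + \eta N^2$ steps; a Chernoff bound with mean $O((N^2 \log N) N^{-\delta}) = o(N^2)$ then yields $\IP(\rho \le T_{\mathrm{abs}}) = \IP(\#\text{deletions} > \eta N^2) \to 0$. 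In particular the total number of deletions is $o(N^2)$ with high probability, which is what the reduction used.

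For the spreading bound, fix a vertex $x$ and let $N_x$ be the number of deletions incident to $x$ up to $\theta$; we show $\IP(\exists x : N_x \ge (1 - \kappa) N) \to 0$. The deterministic input is: if $n < \theta$ and $Z_n^{N,\textnormal{min}} = k$, then there are at least $\tfrac{\kappa}{8} k N$ discordant edges — for $k \le \kappa N/2$ each minority vertex still has $\ge \kappa N/2$ discordant edges by the degree bound, and for $k > \kappa N/2$ at least $k(N-k) - \eta N^2 \ge \tfrac{\kappa}{8} N^2$ discordant edges of the complete graph survive. Hence at any step $i \le \theta$ the uniformly chosen edge is incident to $x$ with conditional probability at most $\tfrac{8}{\kappa Z_{i-1}^{N,\textnormal{min}}}$, and that step is a deletion with independent probability $1 - q_N$, so $N_x$ is stochastically dominated by a thinned sum of Bernoullis whose sum of conditional means is at most $\tfrac{8}{\kappa}(1 - q_N) R$ with $R := \sum_{i \le T_{\mathrm{abs}}} (Z_{i-1}^{N,\textnormal{min}})^{-1}$. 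Using the Green's function of simple random walk on $\{0, \dots, N\}$ killed at $\{0,N\}$, namely $G(a,k) = 2(a\wedge k)(N - a\vee k)/N \le 2(k \wedge (N-k))$ uniformly in the start $a$, together with a geometric bound on the number of deletions per opinion-update epoch, gives $\IE[R] = O(N)$, so $R \le N \log N$ with high probability. On that event the dominating variable has mean $O(N^{1-\delta} \log N) = o(N)$, so a Chernoff bound gives $\IP(N_x \ge (1-\kappa)N) \le e^{-\Omega(N \log N)}$ for each $x$; a union bound over the $N$ vertices — with $\IP(R > N \log N) \to 0$ pulled outside the union — gives $\IP(\sigma \le T_{\mathrm{abs}}) \to 0$, and the theorem follows.

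The delicate point is the spreading bound, and within it the need for an estimate on the number of deletions at a single vertex that is uniform over all $N$ vertices. This is what forces the discordant-edge lower bound (and hence the bootstrap through $\sigma$ and $\rho$) and the Green's function computation showing that the integrated per-step risk $\sum_i (Z_{i-1}^{N,\textnormal{min}})^{-1}$ is only of order $N$; the thinning factor $1 - q_N = O(N^{-\delta})$ is precisely what drives the per-vertex deletion count down to $o(N)$ with a failure probability small enough to survive the union bound over vertices.
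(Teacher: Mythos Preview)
Your argument is correct and takes a genuinely different route from the paper's proof.

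The paper works with the \emph{delayed} OV-model and couples it to a dynamical deletion graph $(\Lambda^N,\cE^N_n)$, which it shows is an \ER graph at each fixed time; classical \ER degree bounds then give $D_n^{N,\textnormal{min}}\geq\kappa N$ up to a time $n_2\asymp N^2/(1-q_N)$. To show absorption happens before $n_2$, the paper partitions $\{0,\ldots,N\}$ into finitely many geometric levels $[r_N^{-(k+1)}N,\,r_N^{-k}N]$ with $r_N=(1-q_N)^{-1/2}$ and proves inductively (Propositions~\ref{prop:Aone} and~\ref{prop:Ak}) that each level is crossed within $r_N^\alpha N^2$ steps of the delayed chain. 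The key technical device is the i.i.d.\ Bernoulli minorisation of the opinion-update indicators (Lemma~\ref{lem:iidcoupling}), valid whenever the discordant-edge count stays above a threshold.

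Your proof dispenses with the delayed model, the \ER comparison, and the level decomposition altogether. Instead you run a bootstrap on the three stopping times $T_{\mathrm{abs}},\sigma,\rho$: the aggregate-deletion bound is immediate from $\IE[\tau_N']\leq N^2/4$ and a binomial tail, and the spreading bound comes from (i) the deterministic lower bound $|\bfE^{N,d}_n|\geq\tfrac{\kappa}{8}Z_n^{N,\textnormal{min}}N$ on $\{n<\theta\}$, (ii) the Green's function estimate $\IE\sum_{i\leq T_{\mathrm{abs}}}(Z_{i-1}^{N,\textnormal{min}})^{-1}=O(N)$, and (iii) a Chernoff bound for sums of adapted Bernoullis with predictable means summing to $O(N^{1-\delta}\log N)$, strong enough to survive a union bound over vertices. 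This is more elementary and self-contained, and it yields direct per-vertex control rather than going through a global subgraph comparison. The paper's approach, on the other hand, isolates the \ER coupling as a reusable structural tool and makes the time-scale at which connectivity fails explicit, which may be of independent interest.

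One small point worth making precise when you write this out: the Chernoff step requires the sum of conditional means to be \emph{deterministically} bounded, so you should explicitly introduce the further stopping time $\tilde\theta=\theta\wedge\min\{n:\sum_{i\leq n}(Z_{i-1}^{N,\textnormal{min}})^{-1}>N\log N\}$ and apply optional stopping to the exponential supermartingale up to $\tilde\theta$; your parenthetical ``with $\IP(R>N\log N)\to0$ pulled outside the union'' indicates you have this in mind, but the referee will want to see it spelled out.
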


Note that here and in what follows we use the following big $O$ notation: For real-valued sequences $(a_N)_{N \in \N}$ and $(b_N)_{N \in \N}$ we write $a_N \in O\bigl( b_N\bigr)$ if $\limsup_{N \to \infty} \big|\frac{a_N}{b_N}\big|<\infty$ and $a_N\in \omega(b_N)$ if $\liminf_{N\to\infty} \big|\frac{a_N}{b_N}\big|=\infty$.

\section{Discussion and Simulations}\label{sec:DiscussionAndSimulation}
\subsection*{Comparison of the OV-Model and the voter model with rewiring} As already mentioned, coevolving (also called \emph{adaptive}) network models have rarely been considered in a mathematically rigorous manner. In case of such variants of the voter model it is likely due to loss of the most powerful tool at hand which is the duality relation with coalescing random walks. One of the few works which is also very relevant for us is by Basu and Sly~\cite{basu2017evolving} who consider a voter model with adaptive rewiring. Let us briefly introduce their model before discussing similarities and differences to the offended voter model. 

The initial population structure is given by an \ER graph with $N$ vertices and edge probability $\tfrac{1}{2}$. Again, initially, half of the individuals hold opinion $1$, the other half hold opinion $0$. In this model, in every time step an individual and a neighbour of opposing opinion are chosen uniformly at random. The individual tries to persuade its neighbour which succeeds with probability $q_N=\tfrac{\beta}{N}$. In case this persuasion fails, the first individual cuts the connection and re-connects to a new individual, where multiple edges between the same vertices are allowed. Two cases for this rewiring are considered: either a new individual is chosen uniformly at random among all individuals, which is called \emph{rewire-to-random}, or among all individuals which have the same opinion as the original individual, called \emph{rewire-to-same}. By definition it is clear that this process reaches basically the same classes of final states as the offended voter model.

The authors managed to identify a phase transition in the parameter $\beta$.
Theorem~1(i) in \cite{basu2017evolving} states that in both variants, for $\beta$ small enough, segregation occurs \emph{rapidly} with high probability in the sense that absorption happens after $O(N^2)$ time steps and the frequency of the minority opinion essentially does not move away from $\tfrac{1}{2}N$. On the other hand, part~(ii) of Theorem~1 shows that for arbitrarily small $\varepsilon'>0$, the minority opinions reach frequency $\varepsilon'$ before absorption with high probability, if $\beta$ is large enough (depending on $\varepsilon'$). In this case, the absorption time is of order $N^3$. This might suggest that consensus is possible for large $\beta$, since now there are of order $N^2$ many opinion updates and indeed the frequency of the minority opinion gets arbitrarily close to $0$. However, Theorem~2 contradicts this claim in the \emph{rewire-to-random} case. It shows for this variant that for any $\beta>0$, there is a frequency $\varepsilon_\ast(\beta)>0$ that w.h.p.\ is not reached by the minority opinions before absorption, i.e.\ segregation always happens with high probability. A corresponding result for \emph{rewire-to-same} is not provided.

The fundamental difference between these models is that in adaptive rewiring, edges can be rewired and reused multiple times while in the OV-Model, opinion updates are only allowed to fail once per edge. This leads to a cubic runtime for large $\beta$ in adaptive rewiring and significant movement of the minority opinion. In contrast to that, runtime in the OV-Model cannot exceed quadratic order and our Theorem~\ref{thm:split2} shows, for any $q_N\to0$, that segregation happens rapidly with high probability. Hence, we do not observe a comparable phase transition in the OV-Model.

Basu and Sly only treat the case where $q_N=\tfrac{\beta}{N}$ and did not consider constant $q_N$. However, their results suggest that in this case consensus is reached with high probability. This provides another significant difference to the OV-Model, where for fixed $q>0$ segregation still occurs in the final state with positive probability.
Furthermore, our bound derived in Theorem~\ref{thm:split}, together with simulations depicted in Figure~\ref{fig:bound_split}, suggests that also consensus is reached with positive probability, which makes the outcome of the OV-Model for fixed $q$ non-trivial.

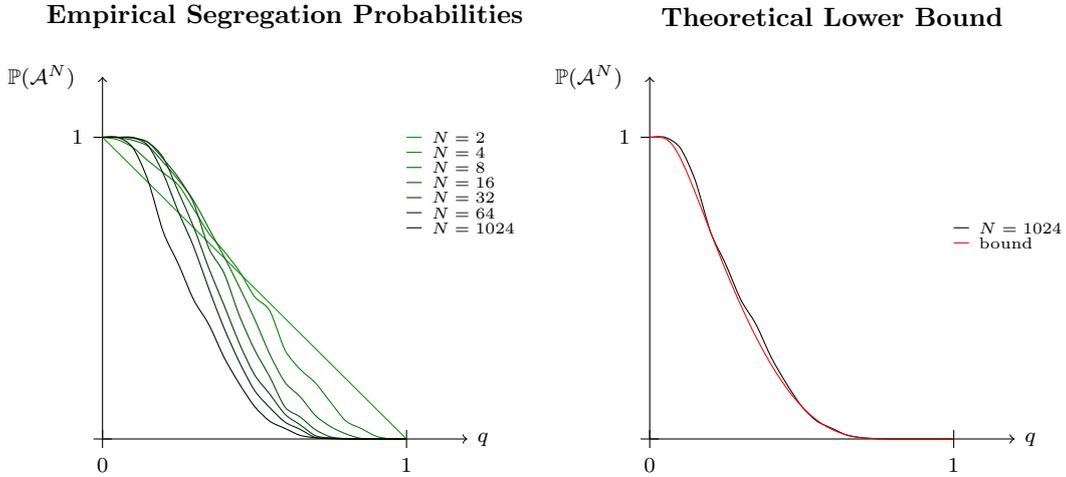
\begin{figure}[htb]
    \centering
    \begin{tikzpicture}[scale=4.0]
    \draw[->] (0,0) -- (1.2,0) node[right] {\scriptsize $q$}; 
    \draw[->] (0,0) -- (0,1.2) node[above] {};

    \draw (1, 0.03) -- (1, -0.03) node[below] {\scriptsize $1$};
    \draw (0, 0.03) -- (0, -0.03) node[below] {\scriptsize $0$};

    \draw (0.03, 1) -- (-0.03, 1) node[left] {\scriptsize $1$};
    \draw (0.03, 0) -- (-0.03, 0) node[left] {};
    
    \draw[black!40!green] (0,1) -- (1,0);
    \draw[black!40!green] (1, 1) -- ++(0.05, 0) node[right] {\tiny  \color{black} $N=2$};
    
    \draw[black!50!green] plot[smooth] file {plots/raw_data/plot_4.table};
    \draw[black!50!green] (1, 0.95) -- ++(0.05, 0) node[right] {\tiny \color{black} $N=4$};
    
    \draw[black!60!green] plot[smooth] file {plots/raw_data/plot_8.table};
    \draw[black!60!green] (1, 0.9) -- ++(0.05, 0) node[right] {\tiny \color{black} $N=8$};

    \draw[black!70!green] plot[smooth] file {plots/raw_data/plot_16.table};
    \draw[black!70!green] (1, 0.85) -- ++(0.05, 0) node[right] {\tiny \color{black} $N=16$};
    
    \draw[black!80!green] plot[smooth] file {plots/raw_data/plot_32.table};
    \draw[black!80!green] (1, 0.8) -- ++(0.05, 0) node[right] {\tiny \color{black} $N=32$};

    \draw[black!90!green] plot[smooth] file {plots/raw_data/plot_64.table};
    \draw[black!90!green] (1, 0.75) -- ++(0.05, 0) node[right] {\tiny \color{black} $N=64$};

    \draw[black] plot[smooth] file {plots/raw_data/plot_1024.table};
    \draw[black] (1, 0.7) -- ++(0.05, 0) node[right] {\tiny \color{black} $N=1024$};

    \node at (0.6, 1.4) {\small\textbf{Empirical Segregation Probabilities}};
    
    \node at (-0.2, 1.2) {\scriptsize $\mathbb{P}(\mathcal{A}^N)$};

    \begin{scope}[xshift=1.8cm]

        \draw[->] (0,0) -- (1.2,0) node[right] {\scriptsize $q$}; 
        \draw[->] (0,0) -- (0,1.2) node[above] {};
    
        \draw (1, 0.03) -- (1, -0.03) node[below] {\scriptsize $1$};
        \draw (0, 0.03) -- (0, -0.03) node[below] {\scriptsize $0$};

        \draw (0.03, 1) -- (-0.03, 1) node[left] {\scriptsize $1$};
        \draw (0.03, 0) -- (-0.03, 0) node[left] {};

        \draw[black] plot[smooth] file {plots/raw_data/plot_1024.table};
        \draw[black] (1, 0.7) -- ++(0.05, 0) node[right] {\tiny \color{black} $N=1024$};
        \draw[color=red] plot[smooth] file {plots/raw_data/spitzer.table};
        \draw[red] (1, 0.65) -- ++(0.05, 0) node[right] {\tiny \color{black} bound};
        \node at (0.6, 1.4) {\small\textbf{Theoretical Lower Bound}};
        \node at (-0.2, 1.2) {\scriptsize $\mathbb{P}(\mathcal{A}^N)$};
    \end{scope}

\end{tikzpicture}
    \caption{Left: Empirical segregation probabilities observed in simulations with $q\in\{0,0.05,0.1,\ldots,1\}$ and varying $N$. For each combination $(q,N)$, $1000$ simulations were carried out. Right: Comparison of the simulated empirical segregation probabilities with $N=1024$ individuals to the lower bound from equation \eqref{eq:split}.}
    \label{fig:bound_split}
\end{figure}

One natural extension of the OV-Model is to allow \emph{befriending}, i.e.\ reconnection of individuals which hold the same opinion, but are disconnected. One possible way to include this is, in every time step, to first assign a fixed weight $\gamma\geq0$ to any missing concordant edge, weight $1$ to every discordant edge and weight $0$ to the remaining edges. Then, randomly sample an edge according to these weights, i.e.\ sample an edge $e$ with probability $w_n(e)/W_n$, where $w_n(e)$ is its individual weight and $W_n=\sum_{f\in E^N}w_n(f)$. If we sample a discordant edge, we proceed as before, and in the other case we reconnect the two individuals. Heuristically, in this variant of the OV-Model we expect that deletions and reconnections balance each other out to some degree since the probability to sample a deleted concordant edge increases when more edges have been deleted and vice versa. Thus, we suspect a similar behaviour as for the model of Basu and Sly in the sense that for fixed $\gamma>0$, consensus is highly probable and behaviour gets more interesting when $\gamma=\gamma_N\to0$.

\subsection*{Segregation and Consensus Probability}

Recall that the total number of individuals of an opinion can be encoded by a symmetric random walk with absorbing boundaries at $0$ and $N$, which we denote by $(Z^{N}_n)_{n\geq 0}$. This walker pauses every time an edge is deleted. Roughly speaking, if the ``opinion random walk'' hits $0$ or $N$ before too many connections are lost, which would cause the graph to become disconnected, then consensus is reached; otherwise the population ends up in segregation. Even though this sounds like a straightforward characterisation, the problem is that the location of edge deletions depends in a highly non-trivial way on opinion evolution.
The locations of edge deletions, in turn, heavily influence the final state of the process: If the deletions concentrate around a single vertex, then the graph disintegrates after $N-1$ deletions. If deletions are "well-mixed", then the graph may be connected until an order of $N^2$ deletions have occurred.

A consequence of \eqref{eq:spitzer} is that the time for $(Z^{N}_n)_{n\geq 0}$ to reach $0$ or $N$ is of order $N^2$ and  with positive probability it can be longer than $cN^2$ for any $c>0$. On the other hand, $\tbinom{N}{2}$ is the maximal number of edges which can be deleted. Thus, the number of deletions until the graph starts to split up into multiple components is at most of order $N^2$. We further believe that the opinions mix sufficiently fast such that deletions occur for a long time in a uniform manner. This would suggest that the time until a component splits off from the rest of the network is also at least of order $N^2$. This heuristics is reflected in our simulations.
Figure~\ref{fig:runtime} suggests existence of a cut-off between runtime of the process until consensus and runtime into an absorbing segregation state. It thus seems that disconnected consensus occurs with probability tending to $0$, leading us to the following conjecture.
Note that in all our simulations with size $N \geq 1000$, we did not observe a single instance of disconnected consensus.
\begin{conjecture}\label{conj:TwoFinalStates}
	Let $q_N = q \in (0,1)$ be constant and $Z_0^{N, \textnormal{min}}= \lfloor \tfrac{N}{2}\rfloor$. Let $E^N\subset \bfZ^N_0$, that is, the initial graph is the complete graph. Then, it holds that 
	\begin{equation*}
		\lim_{N \to \infty} \IP \left( \mathcal{B}_N \right) =0,
	\end{equation*}
	which in particular implies that $\lim_{N \to \infty} \IP \left( \cA_N\cup \cC_N \right) =1$.
\end{conjecture}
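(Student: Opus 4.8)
## Proof proposal for Conjecture~\ref{conj:TwoFinalStates}

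The plan is to show that conditionally on the event $\mathcal{B}_N$ of disconnected consensus, the population graph $\bfG^N_n$ must, at some time before absorption, contain a small connected component (of size $o(N)$, say between $1$ and $N/2$) that is simultaneously \emph{monochromatic} — all vertices in it carry the same opinion — while the rest of the graph also holds that same opinion. The key tension to exploit is the following: for the graph to split off a small component of size $k$, at least $k(N-k)$ edges between that component and its complement must have been deleted. Each such deletion is the failure of an opinion update on a discordant edge, so it can only happen while the two endpoints disagree. Thus a small component can only detach from the bulk if, over the history of the process, order $kN$ discordant edges with one endpoint in that (future) component were sampled and deleted. The strategy is to argue that by the time this many deletions have accumulated along the ``boundary'' of any candidate small set, the opinion random walk $(Z^N_n)_{n\geq0}$ has with overwhelming probability already been absorbed at $0$ or $N$ — i.e.\ global consensus was reached before the graph could disconnect, contradicting $\mathcal{B}_N$.

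Concretely, I would proceed in the following steps. First, set up a union bound over all possible ``first small component to split off'': if $\mathcal{B}_N$ occurs then there is a first time $T$ at which $\bfG^N_T$ becomes disconnected, and at that time there is a connected component $S$ with $1\le |S|\le N/2$ and (since local consensus holds on each component of $\bfG^N_\infty$, and under $\mathcal{B}_N$ global consensus holds) $S$ is monochromatic and agrees with the bulk. Second, quantify the deletion cost: detaching $S$ requires deleting all $|S|(N-|S|)\ge |S|N/2$ edges of the cut $(S,S^c)$, and every deletion in that cut occurs at a step where the sampled edge was discordant. Third — and this is where the main work lies — establish a mixing / well-spreading estimate showing that deletions cannot concentrate on a single small cut fast enough: roughly, that the number of opinion-update steps that occur before $cN^2$ cut-deletions accumulate along \emph{any} fixed vertex subset is, with high probability, at least of order $N^2$, with a constant that can be made as large as desired. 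Combined with the Spitzer asymptotics \eqref{eq:spitzer}, which give that $(Z^N_n)$ is absorbed after $O_{\IP}(N^2)$ opinion updates and, more importantly, that $\IP(\tau_N > x N^2)=\beta(x)\to 0$ as $x\to\infty$, one concludes that with probability $1-o(1)$ absorption of the opinion walk precedes disconnection. Fourth, handle the genuinely small components ($|S|$ of constant or slowly growing order) separately, since there the cut has only $\Theta(|S| N)$ edges and the heuristic ``$N^2$ deletions needed'' degrades; here one needs the finer statement that a \emph{fixed} vertex $x$ is isolated (its $N-1$ incident edges all deleted) only after a number of steps that still dominates $\tau_N$, together with a union bound over the $N$ choices of $x$ and, more generally, over the $\binom{N}{k}$ choices of a $k$-set with a factor that the deletion dynamics on that cut is a slowed-down process whose effective clock runs at rate comparable to $|S|/\binom N2$ per step.

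The hard part will be Step three: making rigorous the claim that edge deletions stay ``well-mixed'' and do not concentrate on a small cut before the opinion walk absorbs. Unlike in the rewire models of Basu and Sly, here we cannot invoke duality, and the location of a deletion depends on the full opinion configuration in a delicate way — a discordant edge can only be deleted where opinions currently disagree, and the discordant edge set itself evolves with the opinions. A workable route is to track, for a \emph{fixed} target set $S$, the number of discordant edges in the cut $(S,S^c)$ as a function of $Z^N$ restricted to $S$ and to $S^c$, and to show that whenever $Z^N$ is far from $0$ and $N$ the cut still contains a positive fraction of edges, so that the per-step probability of deleting a \emph{cut} edge is $O(1/N)$ rather than $\Theta(1)$ — hence $\Omega(N^2)$ steps are needed to drain a cut of size $\Theta(N^2)$. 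One must be careful that this estimate survives conditioning on the early behaviour of the process (so a supermartingale / optional-stopping argument on a suitable potential, or an exploration-time coupling, is likely needed), and that the $o(1)$ error terms survive the union bound over all $S$. Given these ingredients, assembling the final statement $\IP(\mathcal{B}_N)\to 0$, and hence $\IP(\mathcal{A}_N\cup\mathcal{C}_N)\to1$ via the disjointness already noted in the remark after Definition~\ref{def:LimitClassicifaction}, is routine.
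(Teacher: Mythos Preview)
The statement you are trying to prove is listed in the paper as a \emph{Conjecture}; the authors offer only heuristic support (the cut-off in Figure~\ref{fig:runtime} between consensus runtimes and segregation runtimes) and explicitly leave it open. There is therefore no proof in the paper to compare against.

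More importantly, your outline contains a structural flaw that makes it unworkable as stated. Your core claim in Steps~2--3 is that ``by the time enough deletions have accumulated along the boundary of any candidate small set $S$, the opinion random walk has with overwhelming probability already been absorbed at $0$ or $N$.'' If that were true, it would establish that with high probability the graph does not disconnect before global consensus is reached --- but that statement would equally rule out the segregation event $\mathcal{A}_N$, which by Theorem~\ref{thm:split} has \emph{strictly positive} limiting probability for every fixed $q\in(0,1)$. In other words, your proposed union bound over candidate cut sets $S$ cannot possibly close: on the event $\mathcal{A}_N$ (which occurs with probability bounded away from zero) there \emph{is} a set $S$ whose entire cut is drained while $Z^N$ remains in the interior of $[0,N]$. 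Any argument that does not distinguish $\mathcal{B}_N$ from $\mathcal{A}_N$ is therefore doomed.

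Relatedly, your Step~1 contains a smaller error: at the \emph{first} disconnection time $T$, the component $S$ that splits off need not be monochromatic, and the bulk $S^c$ certainly need not yet be at consensus. You are conflating the state at time $T$ with the state at absorption. The feature that actually separates $\mathcal{B}_N$ from $\mathcal{A}_N$ is not ``disconnection is hard'' but rather that, \emph{after} disconnection, the now independently evolving pieces all happen to settle on the \emph{same} opinion. A viable attack would have to exploit this: for instance, conditioning on the configuration at the first disconnection time and arguing that the two (or more) independent OV-dynamics on the pieces agree on their final colour only with probability $o(1)$. Your sketch never touches this mechanism.
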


\begin{figure}[htb!]
    \centering
    \begin{tikzpicture}
    \begin{axis}[
        width = 6.5cm,
        title = {Time Until Absorption},
        area style,
        axis lines=middle,
        ytick = {0,100,200,300,400},
        yticklabels = {0,100,200,300,400},
        xtick = {748251},
        xticklabels = {\tiny $\frac{1}{1-q}\binom{N}{2}$},
        scaled ticks = false,
        extra y tick labels = {$0$,$\frac{N}{2}$},
        xlabel={$\tau_{\text{abs}}$},
        xlabel style={
            at={(current axis.south)}, 
            anchor=north, 
            yshift=-8pt 
        },
        ylabel={no. of observations},
        ylabel near ticks,
        enlarge x limits = {0.1},
        ymin = 0,
        ymax = 430,
        grid=major,
        legend style={at={(0.3,0.92)},
        anchor=north,draw=none, font=\tiny,
        legend cell align = {left}}
    ]
    \addplot+[ybar interval, color=orange, opacity=0.3] plot coordinates { 
    (680159.,0)
    (691386.,0)
    (702613.,1)
    (713840.,6)
    (725067.,9)
    (736294.,416)
    (747521.,0)
    };
    \addlegendentry{split}

    \addplot+[ybar interval, color=violet, opacity=0.3] plot coordinates { 
    (53339.,  14)
    (119856.1,48)
    (186373.2,69)
    (252890.3,66)
    (319407.4,63)
    (385924.5,65)
    (452441.6,64)
    (518958.7,61)
    (585475.8,66)
    (651992.9,38)
    (718510.,0)
        };
    \addlegendentry{consensus}

    \end{axis}
    \end{tikzpicture}
    \caption{Histogram of $\tau_{\textnormal{abs}}$ with $N=1024$ and $q=0.3$.}
    \label{fig:runtime}
\end{figure}
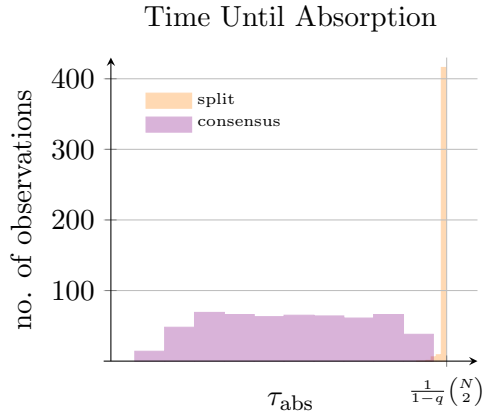

The reasoning above is central to the proof of Theorem~\ref{thm:split}. The strategy is to consider the event that the time until the opinion random walk hits the boundary is longer than it would take to delete all $\tbinom{N}{2}$ edges. The probability of this event is naturally a lower bound on the segregation probability.

In fact, Figure~\ref{fig:runtime} suggests that in case of a split the absorption time concentrates at $\frac{1}{1-q}\tbinom{N}{2}$. In Lemma~\ref{lem:vot_del_moves} it is shown that at this time $\tbinom{N}{2}$ many deletion steps have already occurred with high probability. This leads us to believe that in some cases the lower bound of the segregation probability derived in Theorem~\ref{thm:split} is in fact identical to the limit.
Figure~\ref{fig:bound_split} reinforces this belief since it shows that the lower bound from \eqref{eq:split} is a close fit to the empirical segregation probability for $N\geq1000$. Hence, we have come to the following conjecture.
\begin{conjecture}\label{conj:TightBound}
	Let $q_N = q \in (0,1)$ be constant,  $Z_0^{N, \textnormal{min}}= \lfloor \tfrac{N}{2}\rfloor$ and $E^N\subset \bfZ^N_0$. Then, it holds that 
	\begin{equation*}
		\lim_{N \to \infty} \IP \left( \mathcal{A}_N \right) = \beta \left( \frac{q}{2(1-q)} \right).
	\end{equation*}
\end{conjecture}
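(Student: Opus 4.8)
The lower bound $\liminf_{N\to\infty}\IP(\cA_N)\ge\beta\!\bigl(\tfrac{q}{2(1-q)}\bigr)$ is precisely \eqref{eq:split} of Theorem~\ref{thm:split} with $c=\tfrac12$, so the whole content is the matching upper bound $\limsup_{N\to\infty}\IP(\cA_N)\le\beta\!\bigl(\tfrac{q}{2(1-q)}\bigr)$. The guiding picture is the heuristic described before Conjecture~\ref{conj:TwoFinalStates}: segregation happens exactly when the opinion random walk $(Z_n^N)$ fails to reach $\{0,N\}$ before the population graph disconnects, and --- this is the crucial structural input --- the complete graph with discordant edges successively removed stays connected until $(1-o(1))\binom N2$ of its edges are gone. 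Since opinion updates and deletions occur at the asymptotically fixed ratio $q:(1-q)$, deleting $(1-o(1))\binom N2$ edges goes hand in hand with performing $(1-o(1))\tfrac{q}{1-q}\binom N2=(1-o(1))\tfrac{q}{2(1-q)}N^2$ opinion updates, and $\IP\bigl(\tau_N>(1-o(1))\tfrac{q}{2(1-q)}N^2\bigr)\to\beta\!\bigl(\tfrac{q}{2(1-q)}\bigr)$ by \eqref{eq:spitzer}; this is the source of the constant.

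To make this rigorous I would use the construction in which the \emph{type} of each move is drawn first: let $(M_i)_{i\ge1}$ be i.i.d.\ $\mathrm{Bernoulli}(q)$, independent of the sequence $(Z_m)_{m\ge0}$ of opinion-update outcomes, which is a simple symmetric random walk started from $Z_0^N=\lfloor N/2\rfloor$; set $V_n=\sum_{i\le n}M_i$ (number of opinion updates) and $D_n=n-V_n$ (number of deletions) among the first $n$ moves, so that $|E^N\setminus\bfZ_n^N|=D_n$ since the initial graph is complete. The heart of the argument is the following \emph{connectivity lemma}: for every $\varepsilon>0$, with probability $1-o(1)$ the graph $\bfG_n^N$ is connected at every time $n$ with $D_n\le(1-\varepsilon)\binom N2$. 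Granting this, fix $\varepsilon>0$ and work on $\cA_N$ intersected with the two $(1-o(1))$-events "the connectivity lemma holds" and "$|V_n-qn|=o(N^2)$ for all $n\le N^3$ and $n^\ast\le N^3$", where $n^\ast:=\inf\{n:D_n\ge(1-\varepsilon)\binom N2\}$ and the latter is a routine Hoeffding-plus-union bound over the i.i.d.\ move types. On $\cA_N$ the graph is disconnected in the limit, hence first disconnects at some finite time $\sigma$; edges are never re-added, so $\sigma\le$ absorption time, and the connectivity lemma forces $D_\sigma\ge(1-\varepsilon)\binom N2$, i.e.\ $n^\ast\le\sigma$. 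Thus the process has not absorbed by time $n^\ast$, so $Z_m^N=Z_{V_m}$ for all $m\le n^\ast$, and since consensus is never reached on $\cA_N$ the walk $(Z_m)$ stays in $(0,N)$ throughout its first $V_{n^\ast}$ steps, where moreover $V_{n^\ast}\ge q n^\ast-o(N^2)\ge(1-2\varepsilon)\tfrac{q}{2(1-q)}N^2$ for $N$ large. Hence $\cA_N$, intersected with those $(1-o(1))$-events, is contained in $\bigl\{\tau_N>(1-2\varepsilon)\tfrac{q}{2(1-q)}N^2\bigr\}$, so
\[
  \limsup_{N\to\infty}\IP(\cA_N)\;\le\;\lim_{N\to\infty}\IP\!\Bigl(\tau_N>(1-2\varepsilon)\tfrac{q}{2(1-q)}N^2\Bigr)\;=\;\beta\!\Bigl((1-2\varepsilon)\tfrac{q}{2(1-q)}\Bigr)
\]
by \eqref{eq:spitzer} and continuity of $\beta$. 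Letting $\varepsilon\downarrow0$ and using continuity of $\beta$ once more gives the upper bound, which with Theorem~\ref{thm:split} proves the claim. (This route bypasses Conjecture~\ref{conj:TwoFinalStates}; as a by-product it yields $\IP(\cB_N)+\IP(\cC_N)\to1-\beta\!\bigl(\tfrac{q}{2(1-q)}\bigr)$.)

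The main --- and genuinely hard --- obstacle is the connectivity lemma, which is why the statement is only conjectured. The deleted edges are neither independent nor uniformly distributed: at each deletion a uniform \emph{discordant} edge is removed, so the deleted set $\Delta_n=E^N\setminus\bfZ_n^N$ is entangled with the entire opinion trajectory. One would want to show that this entanglement is harmless for connectivity, i.e.\ that $\bfG_n^N$ is, as far as the existence of a small cut is concerned, no worse than the \ER graph $G\!\bigl(N,\binom N2-D_n\bigr)$, whose connectivity threshold lies at $\sim\tfrac12N\log N$ remaining edges --- far below the $\Theta(N^2)$ edges still present when $D_n\le(1-\varepsilon)\binom N2$. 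The plan would be: (i) show that the voter dynamics on the still-dense graph mix fast, so that up to consensus every vertex changes opinion order $N$ times and the discordant-edge set stays well spread out, with the caveat that low-frequency excursions of $(Z_n^N)$ concentrate deletions near the few minority vertices; (ii) control those excursions --- an excursion to level $k$ lasts $O(k^2)$ opinion updates and hence costs a given minority vertex only $O(k)$ of its edges, so summing over a geometric ladder of levels no vertex loses more than a $(1-\varepsilon)$-fraction of its edges while $D_n\le(1-\varepsilon)\binom N2$; (iii) upgrade "no isolated vertex" to "no sparse cut": for a set $S$ with $|S|=s\le N/2$ all $\Theta(sN)$ cross edges would have to be deleted, an event of probability $e^{-csN}$ under a sufficiently uniform deletion mechanism, which survives the union bound $\binom Ns\le e^{O(s\log N)}$; and (iv) in the complementary regime, if consensus is reached while $D_n\le(1-\varepsilon)\binom N2$, then the absorbed graph still has $\Theta(N^2)$ edges and is connected by (i)--(iii), so the trajectory lies in $\cC_N$, consistent with the lemma. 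The delicate point is making "sufficiently uniform" precise: the deletion rule can be \emph{more} concentrated than uniform during low-frequency excursions, and controlling the joint law of the opinion path and $\Delta_n$ tightly enough to run the cut union bound in step (iii) is the crux.
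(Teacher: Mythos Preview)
This statement is a \emph{conjecture} in the paper; there is no proof to compare against. The paper offers only simulation evidence (Figures~\ref{fig:bound_split} and~\ref{fig:runtime}) together with the heuristic that, in the segregation case, the absorption time concentrates near $\tfrac{1}{1-q}\binom N2$, so that essentially all $\binom N2$ edges are deleted and hence $\sim\tfrac{q}{2(1-q)}N^2$ opinion updates occur.

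Your proposal is not a proof but a proof \emph{strategy}, and you say so explicitly. The reduction you outline is sound: conditional on the connectivity lemma, on $\cA_N$ disconnection forces $D_\sigma\ge(1-\varepsilon)\binom N2$, hence $n^\ast\le\sigma\le\tau_{\textnormal{abs}}$, hence the undelayed walk survives at least $V_{n^\ast}\sim\tfrac{q}{2(1-q)}N^2$ steps in $(0,N)$, and \eqref{eq:spitzer} plus continuity of $\beta$ gives the upper bound. This matches and sharpens the paper's heuristic reasoning.

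You correctly isolate the connectivity lemma as the genuine gap and identify why it is hard: the deleted edges are the discordant ones, so their locations are entangled with the entire opinion trajectory, and low-frequency excursions of $Z^N$ bias deletions toward the few minority vertices. Your sketch of an attack (fast mixing of opinions keeps deletions spread out; an excursion to level $k$ costs a minority vertex $O(k)$ edges; union bound over cuts) is plausible, but step~(iii) requires quantitative control of the joint law of the opinion path and the deleted-edge set that neither you nor the paper supplies. That is precisely why the paper states this as a conjecture rather than a theorem; your write-up is an honest assessment of where the difficulty lies, not a proof.
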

Combining the first two conjectures, we strongly believe that for all $q\in(0,1)$ the probability of reaching consensus is strictly positive and the consensus probability converges to $1$ as $q\to 1$. We were unable to show this in full generality since the codependence of edge deletions and the allocation of opinions makes it quite difficult to control the graph evolution as meticulously as necessary to apply our methods.

We were able to show in Theorem~\ref{thm:consensus} that, if $1-q_N\in \mathcal{O}(N^{-\delta})$, then consensus happens with high probability. This is justified by our simulations visualised in Figure~\ref{fig:bound_split}, and thus we believe the following to be true.
\begin{conjecture}
	Let $Z_\infty^{N, \textnormal{min}}>c N$  and $(\Lambda^N,E^N\cap\bfZ_0^{N})$ dense enough. 
	\begin{enumerate}
		\item If $q_N = q \in (0,1)$ is constant, then $\liminf_{N \to \infty} \IP \left( \mathcal{C}_N \right) >0$.
		\item If $q_N \to 1$ as $N\to \infty$, it holds that $\lim_{N \to \infty} \IP \left( \mathcal{C}_N \right)\to 1$.
	\end{enumerate}
\end{conjecture}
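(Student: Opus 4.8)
I read the hypothesis as a lower bound on the \emph{initial} minority, $Z_0^{N,\textnormal{min}}\ge cN$, since on $\mathcal{C}_N$ one has $Z_\infty^{N,\textnormal{min}}=0$; the prototypical case is the complete graph with $Z_0^{N,\textnormal{min}}=\lfloor N/2\rfloor$. The starting point is the observation made before Theorem~\ref{thm:split}: the frequency process $Z^N=(Z^N_n)_n$ is a delayed simple symmetric random walk absorbed at $\{0,N\}$, and the whole dynamics terminate as soon as $\bfE^{N,d}_n=\emptyset$. Hence $\{Z_\infty^{N,\textnormal{min}}=0\}$ is exactly the event that the walk reaches the boundary $\{0,N\}$ \emph{before} the discordant-edge set empties through loss of connectivity, and $\mathcal{C}_N$ is the sub-event on which $\bfG^N$ is moreover still connected at that moment. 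Both parts thus reduce to a race between absorption of the opinion walk and disintegration of $\bfG^N$.

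I would quantify this race through two quantities. Let $\tau_N$ be the number of opinion updates until the walk hits $\{0,N\}$; since each opinion update is one step of the simple symmetric random walk, $\tau_N$ obeys the Spitzer law \eqref{eq:spitzer}, so $\IP(\tau_N\le xN^2)\to 1-\beta(x)$ for every $x>0$ (the balanced start $c=\tfrac12$ is the slowest to absorb, so a smaller initial minority only helps). Independently of the walk increments, each interaction is an opinion update with probability $q_N$ and a deletion with probability $1-q_N$; therefore, on the event that the dynamics are still running, the number $D$ of deletions accumulated by the $\tau_N$-th opinion update concentrates around $\frac{1-q_N}{q_N}\tau_N$. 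Starting from $\binom{N}{2}$ edges, the \ER connectivity threshold guarantees that $\bfG^N$ stays connected until about $\binom{N}{2}-\tfrac12N\log N$ edges are gone, \emph{provided the removals are spread out like uniform deletions}. Translating, connectivity should persist as long as $\frac{1-q_N}{q_N}\tau_N\lesssim \tfrac12 N^2$, i.e.\ as long as $\tau_N\lesssim \frac{q_N}{2(1-q_N)}N^2$.

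Granting this uniformity input, both parts follow. For part~(i), fix $q_N=q$ and set $x=\frac{q}{2(1-q)}-\eta$ for small $\eta>0$; by \eqref{eq:spitzer}, $\IP(\tau_N\le xN^2)\to 1-\beta(x)>0$ since $\beta(x)<1$ for all $x>0$. On this event $D$ stays below the connectivity threshold, so $\bfG^N$ is connected when the walk absorbs and $\mathcal{C}_N$ occurs; letting $\eta\to0$ yields $\liminf_N\IP(\mathcal{C}_N)\ge 1-\beta\!\left(\frac{q}{2(1-q)}\right)>0$, consistent with Conjecture~\ref{conj:TightBound}. For part~(ii), let $q_N\to1$ and take $x=x_N\to\infty$ slowly; then $\IP(\tau_N\le x_NN^2)\to1$, while on this event $D\lesssim \frac{1-q_N}{q_N}x_NN^2=o(N^2)$ lies far below the threshold, so $\bfG^N$ is connected with high probability and $\IP(\mathcal{C}_N)\to1$.

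\paragraph*{Main obstacle.}
Everything hinges on the uniformity input: the claim that deletions stay well enough distributed for $\bfG^N$ to remain connected until order $\tfrac12N^2$ edges are lost. This is exactly the difficulty the authors flag. Deletions fall on \emph{discordant} edges, whose location is coupled to the opinion partition in a strongly non-autonomous way, and in the worst case all deletions pile up at one vertex, disconnecting $\bfG^N$ after only $N-1$ removals. I would try to control this by establishing that, as long as both opinion classes have size $\Theta(N)$, the partition mixes fast enough that a uniformly chosen discordant edge is, up to constants, a uniformly chosen edge, so that the minimum degree $D^{N,\textnormal{min}}_n$ erodes evenly rather than at a single vertex. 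The genuinely hard regime is the \emph{endgame}, when the minority has shrunk to $o(N)$: there the discordant edges cluster on the few surviving minority vertices, deletions concentrate on them, and such a vertex may lose all its discordant edges and be stranded before an opinion update converts it, producing segregation with a tiny minority (or disconnected consensus) instead of $\mathcal{C}_N$. This is the same mechanism that blocks upgrading $\mathcal{C}^\varepsilon_N$ of Proposition~\ref{prop:almost_consensus} to $\mathcal{C}_N$ and that obstructs ruling out $\mathcal{B}_N$ in Conjecture~\ref{conj:TwoFinalStates}. Note, finally, that Theorem~\ref{thm:consensus} sidesteps the issue only because $1-q_N\in O(N^{-\delta})$ forces a mere $O(N^{2-\delta})$ deletions in total, so a crude union bound over the $N$ vertices keeps $D^{N,\textnormal{min}}_\infty\ge\kappa N$ with no mixing input; for constant $q$, and for $q_N\to1$ slowly, the deletions are far too numerous for such a bound, and proving the mixing/no-concentration estimate---especially through the endgame---is the crux that would have to be resolved.
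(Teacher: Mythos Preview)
The statement you are addressing is a \emph{conjecture} in the paper, not a theorem; the authors explicitly state that they ``were unable to show this in full generality since the codependence of edge deletions and the allocation of opinions makes it quite difficult to control the graph evolution as meticulously as necessary to apply our methods.'' There is therefore no proof in the paper to compare against, and your write-up is, appropriately, a heuristic outline with an openly acknowledged gap rather than a proof.

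Your heuristic is exactly the one the paper promotes: race the Spitzer absorption time of the opinion walk against the connectivity threshold of an \ER-like deletion process, and observe that the putative limit $1-\beta\bigl(\tfrac{q}{2(1-q)}\bigr)$ for $\IP(\mathcal{C}_N)$ is the complement of the bound in Conjecture~\ref{conj:TightBound}. Your identification of the obstacle---that deletions fall on \emph{discordant} edges whose locations are coupled to the opinion partition, and that the endgame with $o(N)$ minority vertices is where the argument breaks---matches the paper's own diagnosis in the discussion section.

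One small correction to your closing remark on Theorem~\ref{thm:consensus}. The paper does \emph{not} get away with ``a crude union bound over the $N$ vertices with no mixing input.'' Its proof couples the delayed OV-model to the \emph{dynamical deletion graph} (Section~\ref{sec:construction}), which deletes \emph{uniformly random} edges regardless of discordance and is therefore always a subgraph of $\bfhG^N_n$; connectivity and minimum-degree bounds are then read off from the \ER comparison (Lemmas~\ref{lem:er_degrees} and~\ref{lem:ConnectivityAndMinimalDegree}). So there \emph{is} a mixing input, but a one-sided one: uniform deletion dominates the true deletion from below. The reason this coupling fails for constant $q$ is not the total number of deletions per se, but that the opinion walk in the delayed model must be driven all the way to the boundary before the dynamical deletion graph loses its $\kappa N$ minimum degree, and the paper achieves this only via the geometric ``level'' scheme (Propositions~\ref{prop:Aone} and~\ref{prop:Ak}) with $r_N=(1-q_N)^{-1/2}\to\infty$, which collapses to finitely many levels precisely when $1-q_N\in O(N^{-\delta})$. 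For constant $q$ the levels do not shrink, the endgame takes too long in delayed time, and the coupling is exhausted---which is the same obstruction you describe, just seen from the paper's construction rather than from a union bound.
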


Lastly, we strongly suspect monotonicities in $q$ of these probabilities.
\begin{conjecture}
	The probabilities of the final states defined in Definition~\ref{def:LimitClassicifaction} are monotone in $q$, i.e.\  $q\mapsto\IP_q(\cA_N)$ is decreasing and $q\mapsto\IP_q(\cC_N)$ is increasing in $q$.
\end{conjecture}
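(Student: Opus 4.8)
The statement is a stochastic monotonicity at fixed $N$, so the natural attack is a \emph{coupling}. Fix $N$ and $0<q_1<q_2<1$, write $\tau_i$ for the absorption time of a copy $\bfZ^{N,q_i}$ of the OV-Model, and aim to realise $\bfZ^{N,q_1}$ and $\bfZ^{N,q_2}$ on a common probability space, both started from the same initial configuration, so that almost surely $\{\bfZ^{N,q_1}\in\cC_N\}\subseteq\{\bfZ^{N,q_2}\in\cC_N\}$ and $\{\bfZ^{N,q_2}\in\cA_N\}\subseteq\{\bfZ^{N,q_1}\in\cA_N\}$; taking probabilities then yields exactly the asserted monotonicities. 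The heuristic is that raising $q$ trades a deletion for an opinion update on each chosen discordant edge, and since deletions only impair connectivity of the final graph while opinion updates only help the opinion walk $Z^N$ reach its absorbing boundary $\{0,N\}$, the $q_2$-copy ought to be pushed toward consensus relative to the $q_1$-copy.

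The target of the coupling is the invariant that at every time $n\le\tau_1\wedge\tau_2$ the two configurations carry identical opinion labels on all vertices while the edge set of the $q_1$-copy is contained in that of the $q_2$-copy, i.e.\ $\bfZ^{N,q_1}_n\cap\Lambda^N=\bfZ^{N,q_2}_n\cap\Lambda^N$ and $\bfZ^{N,q_1}_n\cap E^N\subseteq\bfZ^{N,q_2}_n\cap E^N$. Granting this, both inclusions follow by a short case analysis on which copy absorbs first, using two elementary facts: an absorbed graph that is connected is globally unanimous, and a subgraph of a disconnected graph is disconnected. If $\bfZ^{N,q_1}\in\cC_N$, then at $\tau_1$ the $q_1$-graph is connected, hence unanimous; by the invariant the $q_2$-copy is then unanimous, so it has no discordant edges and is already absorbed at $\tau_1=\tau_2$ with a graph containing the connected $q_1$-graph, i.e.\ $\bfZ^{N,q_2}\in\cC_N$. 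If $\bfZ^{N,q_2}\in\cA_N$ and $\tau_1<\tau_2$, the invariant holds at $\tau_1$; the absorbed $q_1$-copy cannot be in $\cC_N$ (else the previous step forces $\tau_2=\tau_1$), so its graph is disconnected, and the $q_2$-copy is not unanimous at $\tau_1$ (same reason), so both opinions are present and $\bfZ^{N,q_1}\in\cA_N$. If $\bfZ^{N,q_2}\in\cA_N$ and $\tau_2\le\tau_1$, the invariant at $\tau_2$ gives the $q_1$-copy both opinions and a subgraph of the disconnected $q_2$-graph, hence a disconnected graph whose components refine the unanimous components of the absorbed $q_2$-copy; thus the $q_1$-copy has no discordant edges at $\tau_2$, so $\tau_1=\tau_2$ and $\bfZ^{N,q_1}\in\cA_N$.

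It remains to build the invariant, and this is where the difficulty lies. One would couple the two chains stepwise: while opinions agree, the discordant-edge sets are nested, so one can make both copies pick the \emph{same} discordant edge with the largest possible probability, and couple the move types with a single uniform $U\in[0,1]$ so that $\{q_1\text{-copy updates}\}=\{U<q_1\}\subseteq\{U<q_2\}=\{q_2\text{-copy updates}\}$, sharing the winning opinion whenever both update. Every such move preserves the invariant \emph{except} the one occurring on $\{U\in[q_1,q_2)\}$ at a shared edge, where the $q_1$-copy deletes the edge while the $q_2$-copy performs an opinion update on it: exactly one endpoint then changes its label and opinion-equality is lost. Because the two opinions play symmetric roles in a voter update there is no partial order on opinion configurations that this move respects, so it is unclear how to re-establish a comparison — which is why the statement is only conjectured. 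A plausible remedy is to couple a \emph{reduced} statistic, such as the pair consisting of the discordant-edge set together with the minority count $Z^{N,\textnormal{min}}$, and to look for a partial order on it that a more elaborate coupling preserves, exploiting that $Z^N$ is a delayed simple symmetric random walk whose clock runs, relative to the deletion clock, monotonically faster in $q$. Making this work would require showing that the locations of the deletions in the faster-voting chain stochastically dominate those of the slower chain in a connectivity-relevant sense; this is exactly the codependence of edge deletions and opinion allocation highlighted after Conjecture~\ref{conj:TwoFinalStates} and in Section~\ref{sec:DiscussionAndSimulation}, and it is what makes the fine control of the graph evolution, hence a complete proof, elusive.
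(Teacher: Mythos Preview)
The statement you attempt is labelled a \emph{conjecture} in the paper and is not proved there; the authors explicitly remark that ``a proof of this seems elusive since the interdependence of opinion dynamics and deletion locations is difficult to handle.'' Your proposal is therefore not to be compared against a proof in the paper --- there is none --- but against that informal commentary, and in that respect your analysis is accurate and considerably more detailed: you set up the natural coupling, verify that the target invariant (equal opinion labels, nested edge sets) would indeed imply both monotonicities via the case analysis you give, and then pinpoint where maintaining the invariant fails. This is precisely the obstruction the authors allude to, and your write-up is a fair elaboration of it rather than a proof.

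One small addition to your diagnosis: the failure mode you isolate (a shared discordant edge together with $U\in[q_1,q_2)$) is not the only one. Under the invariant the discordant-edge set of the $q_1$-copy is a \emph{strict} subset of that of the $q_2$-copy as soon as the former has lost an edge, so with positive probability the maximal coupling of the two uniform samples forces the chains to act on \emph{different} discordant edges in the same step; any opinion update then also destroys opinion equality. One can try to sidestep this by passing to the delayed construction of Section~\ref{sec:construction} and letting both copies sample the same uniform pair from $E^N$, but the problem reappears whenever the sampled pair is discordant and present in the $q_2$-copy yet already deleted in the $q_1$-copy and the $q_2$-copy then performs an update. In either formulation the essential obstruction is the one you name: once the opinion labels diverge there is no useful partial order on opinion configurations that the dynamics respect. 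Your proposal therefore correctly diagnoses the difficulty but, as you yourself concede, does not resolve it; the conjecture remains open.
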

Intuitively, when $q$ increases, fewer edges will be deleted during the time the random walk takes to reach the boundary and segregation should become less likely. This intuition is reflected in Figure~\ref{fig:bound_split}. However, a proof of this seems elusive since the interdependence of opinion dynamics and deletion locations is difficult to handle.

\subsection*{Structure of the final graph}
As long as the total number of edges is of quadratic order, it seems that the deletion of edges happens fairly uniformly. Heuristically, this can be understood by fixing an edge and noticing that the states of its connected vertices change an order of $N$ times before that edge itself is sampled. Following this argument, the connected vertices of a typical sampled edge should be in a random well-mixed state that does depend on the opinion frequencies but not on the location of the edge.
As a consequence, the induced population graph $\bfG^N_n$ of the OV-Model resembles a dense \ER graph in this stage.
This is in fact one of the key ideas for the proof of Theorem~\ref{thm:consensus}, which we make precise in Section~\ref{sec:ProofConsensus}.
Indeed, this idea is supported by our simulations where in case of consensus even the final graph $\bfG^N_{\infty}$ has edges still of quadratic order.

On the other hand, Figure~\ref{fig:components_split}(b) demonstrates that, in the case of segregation, the number of remaining edges is sub-quadratic and for the most part even of linear order and $\bfG^N_n$ eventually becomes much sparser. Then, the comparability to an \ER graph changes significantly.
One strong indicator for that is that the simulation runs, where segregation is reached as the final state, seem to produce a final graph $\bfG^N_{\infty}$, which contains two connected components of linear size with positive probability. See Figure~\ref{fig:components_split}(a) for a visualisation. In contrast to that, an \ER graph is either fully connected or contains a unique macroscopic component of linear order. 

However, our simulations clearly indicate that in case of segregation the final graph $\bfG_{\infty}^N$ does not always contain two macroscopic components (see Figure~\ref{fig:components_split}(a) and (d)).
It seems that there exists a breaking point with respect to the size of the largest connected component around $\frac45 N$.
If the largest component goes beyond this size, then the second largest component begins to no longer be of linear order.
Intuitively, if the largest component is large, the second component becomes more fragile, a size of linear order becomes more difficult to sustain and it likely breaks apart. This is in line with Figure~\ref{fig:components_split}(d), where the maximum of the observed total number of components coincides with the breaking point in (a). Furthermore, Figures~\ref{fig:components_split}(a) and (c) together show that a significant amount of simulations, with segregation as final state, do produce only one connected component of linear size. This suggests that this is not a finite size effect.

On the other hand, we believe that the total number of the minority opinion is always of linear order if segregation occurs. One can show that this is a consequence of Conjecture~\ref{conj:TightBound} and Theorem~\ref{thm:split}, i.e.\ for $c_N\to0$ it holds that
\begin{equation*}
	\lim_{N \to \infty} \IP \left( c_N N \geq Z_{\infty}^{N, \textnormal{min}} > 0\right)= 0.
\end{equation*}

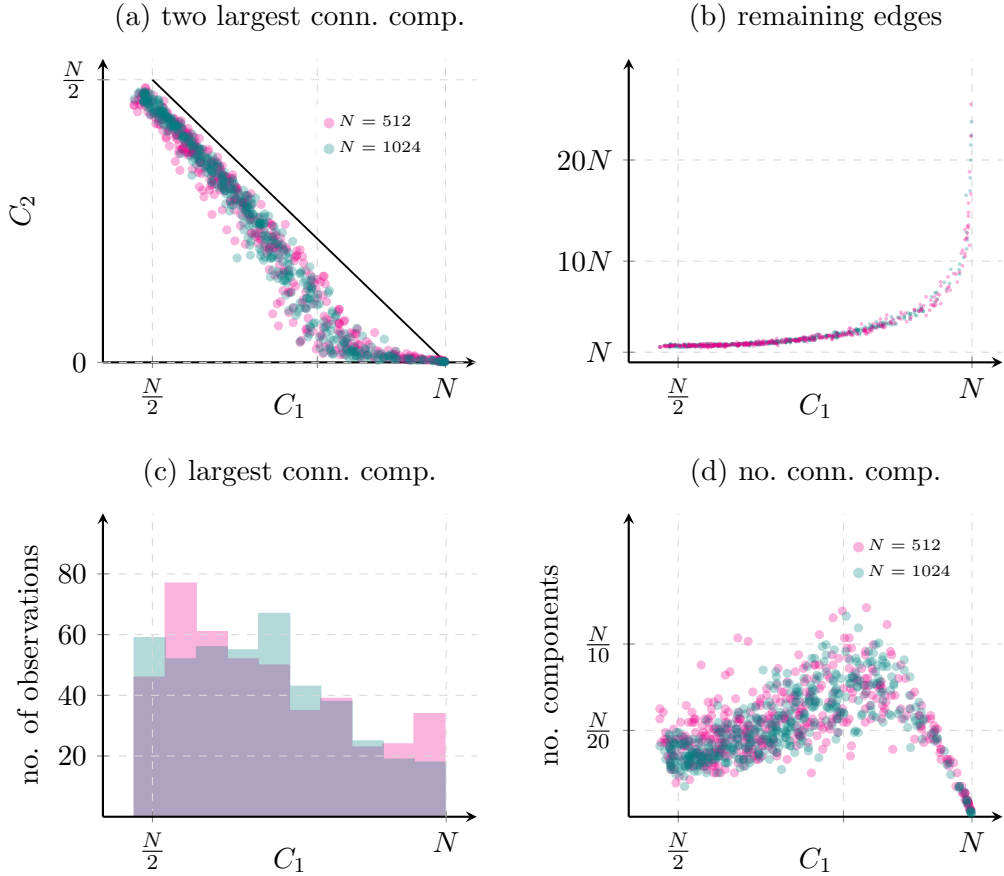
\begin{figure}[htb!]
    \centering
    \begin{tikzpicture}
    \begin{groupplot}[
        group style={group size=2 by 2, vertical sep=2cm, horizontal sep=2cm},
        width=6.5cm,
        axis lines=middle,
        axis line style={thick, black},
        ymin=0,
        ymax=550,
        legend style={
            at={(0.5,1.05)}, anchor=south, draw=none, font=\tiny,
            legend cell align={left}
        },
        legend image post style={line width=1pt, mark repeat=2},
        grid style={dashed,gray!30}
    ]
    
    \nextgroupplot[
        title = {(a) two largest conn.\ comp.},
        axis lines=middle,
        xtick = {512, 800},
        ytick = {0},
        extra x ticks = {1024},
        extra y ticks = {0,512},
        xticklabels = {$\frac{N}{2}$, {}},
        extra x tick labels = {$N$},
        extra y tick labels = {$0$,$\frac{N}{2}$},
        xlabel={$C_1$},
        xlabel style={
            at={(current axis.south)}, 
            anchor=north, 
            yshift=-8pt 
        },
        ylabel={$C_2$},
        ylabel near ticks,
        enlarge x limits = {0.1},
        ymin = 0,
        ymax = 550,
        grid=major,
        legend style={at={(0.73,0.85)},
        anchor=north,draw=none, font=\tiny,
        legend cell align = {left}},
    ]

        \addplot[only marks, mark=*, color=magenta, opacity=0.3, mark size=1.5 pt] table [x expr=\thisrowno{0}*2, y expr=\thisrowno{1}*2] {plots/raw_data/components_512};
        \addlegendentry{$N=512$}

        \addplot[only marks, mark=*, color=teal, opacity=0.3, mark size=1.5 pt] table {plots/raw_data/components_1024};
        \addlegendentry{$N=1024$}

        \addplot[color=black, line width=0.7pt] coordinates {(512,512) (1024,0)};
    \nextgroupplot[
        title = {(b) remaining edges},
        axis lines=middle,
        ylabel near ticks,
        ytick = {1,10,20},
        yticklabels = {$N$,$10N$,$20N$},
        xtick = {0.5,1},
        xticklabels = {$\frac{N}{2}$,$N$},
        xlabel={$C_1$},
        xlabel style={
            at={(current axis.south)}, 
            anchor=north, 
            yshift=-8pt 
        },
        enlarge x limits = {0.1},
        ymin = 0,
        ymax = 30,
        grid=major,
        legend style={at={(0.73,0.85)},
        anchor=north,draw=none, font=\tiny,
        legend cell align = {left}},
    ]

        \addplot[only marks, mark=*, color=teal, opacity=0.3, mark size=0.5 pt] table {plots/raw_data/remaining_edges_1024};

        \addplot[only marks, mark=*, color=magenta, opacity=0.3, mark size=0.5 pt] table {plots/raw_data/remaining_edges_512};


    \nextgroupplot[
        title = {(c) largest conn.\ comp.},
        area style,
        axis lines=middle,
        xtick = {512},
        ytick = {0,20,40,60,80},
        extra x ticks = {1024},
        xticklabels = {$\frac{N}{2}$},
        extra x tick labels = {$N$},
        extra y tick labels = {$0$,$\frac{N}{2}$},
        xlabel={$C_1$},
        xlabel style={
            at={(current axis.south)}, 
            anchor=north, 
            yshift=-8pt 
        },
        ylabel={no. of observations},
        ylabel near ticks,
        enlarge x limits = {0.1},
        ymin = 0,
        ymax = 100,
        grid=major,
        legend style={at={(0.73,0.85)},
        anchor=north,draw=none, font=\tiny,
        legend cell align = {left}}
    ]
    \addplot+[ybar interval, color=magenta, opacity=0.3] plot coordinates { ( 480.0 , 46.0 )
( 534.2 , 77.0 )
( 588.4 , 61.0 )
( 642.6 , 52.0 )
( 696.8 , 50.0 )
( 751.0 , 35.0 )
( 805.2 , 39.0 )
( 859.4 , 23.0 )
( 913.6 , 24.0 )
( 967.8 , 34.0 )
(1022, 0)};

    \addplot+[ybar interval, color=teal, opacity=0.3] plot coordinates { ( 480.0 , 59.0 )
( 534.3 , 52.0 )
( 588.6 , 56.0 )
( 642.9 , 55.0 )
( 697.2 , 67.0 )
( 751.5 , 43.0 )
( 805.8 , 38.0 )
( 860.1 , 25.0 )
( 914.4 , 19.0 )
( 968.7 , 18.0 )
( 1023 , 0 )}; 

    \addplot[only marks, mark=*, color=teal, opacity=0, mark size=1.5 pt] table {plots/raw_data/components_1024};

    \addplot[only marks, mark=*, color=magenta, opacity=0, mark size=1.5 pt] table [x expr=\thisrowno{0}*2, y expr=\thisrowno{1}*2] {plots/raw_data/components_512};

    \nextgroupplot[
        title = {(d) no.\ conn.\ comp.},
        axis lines=middle,
        ytick = {51.2,102.4},
        xtick = {512,800,1024},
        xticklabels = {$\frac{N}{2}$, {}, $N$},
        yticklabels = {$\frac{N}{20}$,$\frac{N}{10}$},
        extra x tick labels = {$N$},
        extra y tick labels = {$0$,$\frac{N}{2}$},
        xlabel={$C_1$},
        xlabel style={
            at={(current axis.south)}, 
            anchor=north, 
            yshift=-8pt 
        },
        ylabel={no. components},
        ylabel near ticks,
        enlarge x limits = {0.1},
        ymin = 0,
        ymax = 180,
        grid=major,
        legend style={at={(0.74,0.95)},
        anchor=north,draw=none, font=\tiny,
        legend cell align = {left}},
    ]

        \addplot[only marks, mark=*, color=magenta, opacity=0.3, mark size=1.5 pt] table [x expr=\thisrowno{0}*2, y expr=\thisrowno{1}*2] {plots/raw_data/num_components_512};
        \addlegendentry{$N=512$}

        \addplot[only marks, mark=*, color=teal, opacity=0.3, mark size=1.5 pt] table {plots/raw_data/num_components_1024};
        \addlegendentry{$N=1024$}

    \end{groupplot}
\end{tikzpicture}
    \caption{
    Segregation cases of 1000 simulations for each combination of $q=0.3$ and $N\in\{512,1024\}$. \textbf{(a)} Size $C_1$ of largest versus size $C_2$ of second largest connected component. \textbf{(b)} Number of remaining edges in final graph. \textbf{(c)} Histogram of $C_1$. \textbf{(d)} Number of connected components in final graph.
    }
    \label{fig:components_split}
\end{figure}

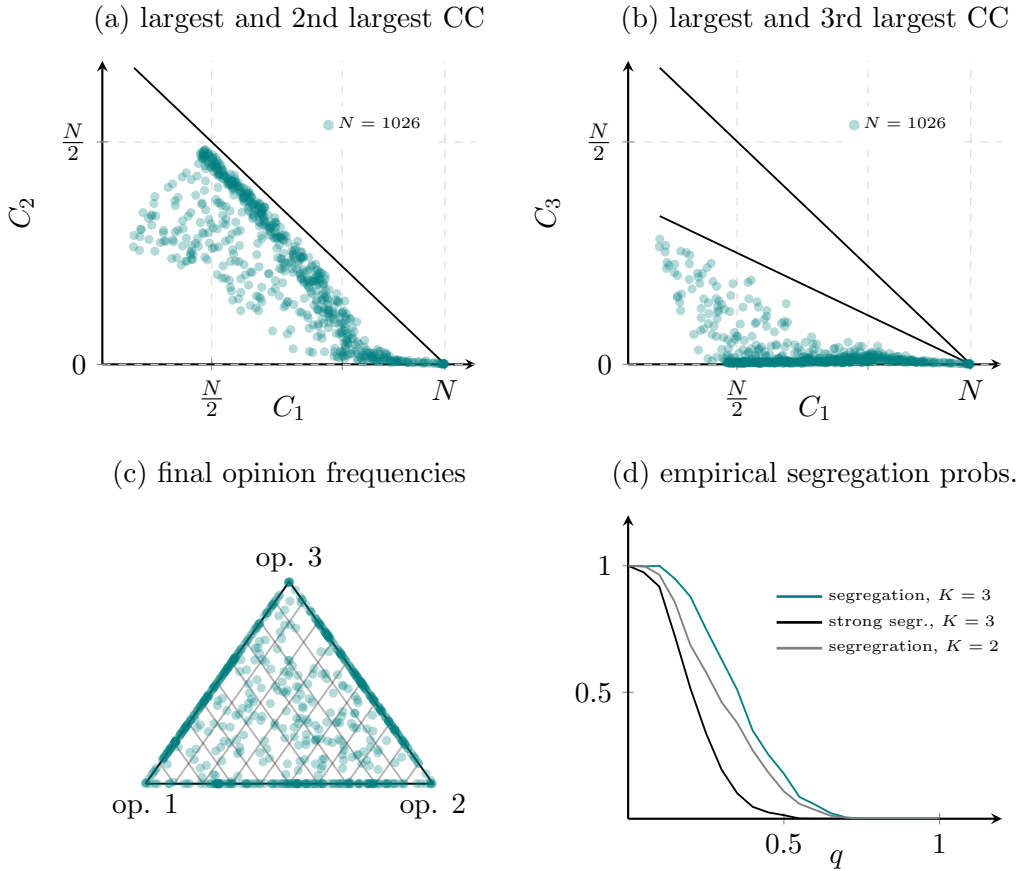
\begin{figure}[htb!]
    \centering
    \begin{tikzpicture}
    \begin{groupplot}[
        group style={group size=2 by 2, vertical sep=2cm, horizontal sep=2cm},
        width=6.5cm,
        axis lines=middle,
        axis line style={thick, black},
        ymin=0,
        ymax=550,
        legend style={
            at={(0.5,1.05)}, anchor=south, draw=none, font=\tiny,
            legend cell align={left}
        },
        legend image post style={line width=1pt, mark repeat=2},
        grid style={dashed,gray!30}
    ]
    
    \nextgroupplot[
        title = {(a) largest and 2nd largest CC},
        axis lines=middle,
        xtick = {513, 800},
        ytick = {0},
        extra x ticks = {1026},
        extra y ticks = {0,513},
        xticklabels = {$\frac{N}{2}$, {}},
        extra x tick labels = {$N$},
        extra y tick labels = {$0$,$\frac{N}{2}$},
        xlabel={$C_1$},
        xlabel style={
            at={(current axis.south)}, 
            anchor=north, 
            yshift=-8pt 
        },
        ylabel={$C_2$},
        ylabel near ticks,
        enlarge x limits = {0.1},
        ymin = 0,
        ymax = 700,
        grid=major,
        legend style={at={(0.73,0.85)},
        anchor=north,draw=none, font=\tiny,
        legend cell align = {left}},
    ]

        \addplot[only marks, mark=*, color=teal, opacity=0.3, mark size=1.5 pt] table {plots/three_opinions/connected_components_1_2_1026};
        \addlegendentry{$N=1026$}

        \addplot[color=black, line width=0.7pt] coordinates {(342,684) (1026,0)};

    \nextgroupplot[
        title = {(b) largest and 3rd largest CC},
        axis lines=middle,
        xtick = {512, 800},
        ytick = {0},
        extra x ticks = {1026},
        extra y ticks = {0,513},
        xticklabels = {$\frac{N}{2}$, {}},
        extra x tick labels = {$N$},
        extra y tick labels = {$0$,$\frac{N}{2}$},
        xlabel={$C_1$},
        xlabel style={
            at={(current axis.south)}, 
            anchor=north, 
            yshift=-8pt 
        },
        ylabel={$C_3$},
        ylabel near ticks,
        enlarge x limits = {0.1},
        ymin = 0,
        ymax = 700,
        grid=major,
        legend style={at={(0.73,0.85)},
        anchor=north,draw=none, font=\tiny,
        legend cell align = {left}},
    ]

        \addplot[only marks, mark=*, color=teal, opacity=0.3, mark size=1.5 pt] table {plots/three_opinions/connected_components_1_3_1026};
        \addlegendentry{$N=1026$}

        \addplot[color=black, line width=0.7pt] coordinates {(342,684) (1026,0)};
        \addplot[color=black, line width=0.7pt] coordinates {(342,342) (1026,0)};
       
    \nextgroupplot[
        title = {(c) final opinion frequencies},
        axis line style=transparent,
        tick style = transparent,
        tick label style = transparent,
        xlabel style = transparent,
        ylabel style = transparent,
        xtick = {512, 800},
        ytick = {0},
        extra x ticks = {1026},
        extra y ticks = {0,1026},
        xticklabels = {$\frac{N}{2}$, {}},
        extra x tick labels = {$N$},
        extra y tick labels = {$0$,$\frac{N}{2}$},
        xlabel={$C_1$},
        enlarge x limits = {0.15},
        enlarge y limits = {0.15},
        ymin = 0,
        ymax = 1026,
        xmin = 0,
        xmax = 1026,
        grid=none,
        legend style={at={(0.73,0.85)},
        anchor=north,draw=none, font=\tiny,
        legend cell align = {left}},
    ]

        \addplot[color=black, line width=0.7pt, opacity=0.3] coordinates {
        (0.00, 0.0)
        (51.3, 88.85420642829999)
        (102.6, 0)
        (205.2, 0)
        (102.6, 177.70841285659998)
        (153.89999999999998, 266.5626192849)
        (307.79999999999995, 0)
        (410.4,0)
        (205.2, 355.41682571319996)
        (256.5, 444.2710321414999)
        (513.0, 0)
        (615.5999999,0)
        (307.79999999999995, 533.1252385698)
        (359.09999999999997, 621.9794449981)
        (718.1999999999999,0)
        (820.8,0)
        (410.4, 710.8336514263999)
        (461.7, 799.6878578546999)
        (923.4,0)};

        \addplot[color=black, line width=0.7pt, opacity=0.3] coordinates {
        (1026.0, 0.0)
        (923.4,0)
        (974.7, 88.85420642829999)
        (923.4, 177.70841285659998)
        (820.8, 0)
        (718.2, 0)
        (872.1, 266.5626192849)
        (820.8, 355.41682571319996)
        (615.6, 0)
        (513.0, 0)
        (769.5, 444.2710321414999)
        (718.2, 533.1252385698)
        (410.4, 0)
        (307.8, 0)
        (666.9, 621.9794449981)
        (615.6, 710.8336514263999)
        (205.2, 0)
        (102.6, 0)
        (564.3, 799.6878578546999)
        (513.0, 888.5420642829998)
        (0, 0)
        };

        \addplot[color=black, line width=0.7pt] coordinates {(0,0) (1026,0) (513,888) (0,0)};

        \addplot[only marks, mark=*, color=teal, opacity=0.3, mark size=1.5 pt] table[x expr=\thisrowno{1} + 0.5* (1026- \thisrowno{0}-\thisrowno{1}), y expr=0.866 * (1026- \thisrowno{0}-\thisrowno{1})] {plots/three_opinions/final_opinions_1_2_1026};

        \node[below] at (0,0) {op. 1};
        \node[below] at (1026,0) {op. 2};
        \node[above] at (513,888.542) {op. 3};

    \nextgroupplot[xmax=1.2, ymax=1.2,
    legend style={at={(0.7,0.5)}},
    title={(d) empirical segregation probs.},
    xlabel={$q$},
        xlabel style={
            at={(current axis.south)}, 
            anchor=north, 
            yshift=-8pt,
            xshift=8pt
        },
    ylabel={},
        ylabel style={
            at={(current axis.north)}, 
            anchor=north, 
            yshift=0pt,
            xshift=-90pt
        },
    ]
        \addplot[color=teal, line width=0.7pt] coordinates {
        ( 0.0 ,  1.0 )
        ( 0.05 ,  0.997 )
        ( 0.1 ,  1.0 )
        ( 0.15000000000000002 ,  0.9490000000000001 )
        ( 0.2 ,  0.878 )
        ( 0.25 ,  0.75 )
        ( 0.30000000000000004 ,  0.63 )
        ( 0.35000000000000003 ,  0.51 )
        ( 0.4 ,  0.34800000000000003 )
        ( 0.45 ,  0.254 )
        ( 0.5 ,  0.178 )
        ( 0.55 ,  0.08600000000000001 )
        ( 0.6000000000000001 ,  0.056 )
        ( 0.65 ,  0.022 )
        ( 0.7000000000000001 ,  0.005 )
        ( 0.75 ,  0.0 )
        ( 0.8 ,  0.0 )
        ( 0.8500000000000001 ,  0.0 )
        ( 0.9 ,  0.0 )
        ( 0.9500000000000001 ,  0.0 )
        ( 1.0 ,  0.0 )
        };
        \addlegendentry{segregation, $K=3$}

        \addplot[color=black, line width=0.7pt] coordinates {
        ( 0.0 ,  1.0 )
        ( 0.05,  0.974)
        ( 0.1 ,  0.918)
        ( 0.15,  0.722)
        ( 0.2 ,  0.515)
        ( 0.25,  0.338)
        ( 0.30,  0.194)
        ( 0.35,  0.101)
        ( 0.4 ,  0.047)
        ( 0.45,  0.025)
        ( 0.5 ,  0.014)
        ( 0.55,  0.001)
        ( 0.60,  0)
        ( 0.65,  0)
        ( 0.70,  0)
        ( 0.75,  0)
        ( 0.8 ,  0)
        ( 0.85,  0)
        ( 0.9 ,  0)
        ( 0.95,  0)
        ( 1.0 ,  0)
        };
        \addlegendentry{strong segr., $K=3$}

        \addplot[color=gray, line width=0.7pt] coordinates {
        ( 0.0 ,  1.0 )
        ( 0.05 ,  1.0 )
        ( 0.1 ,  0.965 )
        ( 0.15000000000000002 ,  0.856 )
        ( 0.2 ,  0.6859999999999999 )
        ( 0.25 ,  0.579 )
        ( 0.30000000000000004 ,  0.46099999999999997 )
        ( 0.35000000000000003 ,  0.379 )
        ( 0.4 ,  0.27 )
        ( 0.45 ,  0.18300000000000005 )
        ( 0.5 ,  0.10799999999999998 )
        ( 0.55 ,  0.05900000000000005 )
        ( 0.6000000000000001 ,  0.03500000000000003 )
        ( 0.65 ,  0.01200000000000001 )
        ( 0.7000000000000001 ,  0.0040000000000000036 )
        ( 0.75 ,  0.0010000000000000009 )
        ( 0.8 ,  0.0 )
        ( 0.8500000000000001 ,  0.0 )
        ( 0.9 ,  0.0 )
        ( 0.9500000000000001 ,  0.0 )
        ( 1.0 ,  0.0 )
        };
        \addlegendentry{segregation, $K=2$}

    \end{groupplot}
\end{tikzpicture}
    \caption{
   Simulation of three opinions. Depiction of 633 segregation states reached in 1000 runs of $N=1026$, $q=0.3$, initial complete graph with $342$ nodes of each opinion.
   \textbf{(a)} Sizes of largest versus second largest connected component.
   \textbf{(b)} Largest versus third largest connected component.
   \textbf{(c)} Ternary plot of opinion frequencies at absorption.
   \textbf{(d)} Comparison of empirical probabilities for $q\in\{0,0.05,0.1,\ldots1\}$ and $K=3$. Teal: Any segregation. Black: Segregation with 3 opinions present. Gray: Segregation with $K=2$ initial opinions (cf.\ Figure~\ref{fig:bound_split} for $N=1024$).
    }\label{fig:three-opinions}
\end{figure}

\subsection*{Multiple opinions}
A natural generalisation of the (two-opinion) OV-Model is the consideration of $K\in\N$, $K\geq2$, opinions present in the population. Letting $[K]=\{1,\ldots,K\}$ be the set of opinions, the formal extension works as follows:

Consider a graph $\mathbf{G}_n^N = (\Lambda^N, \mathbf{E}_n^N)$ where $\mathbf{E}_n^N$ is a (randomly evolving) edge set.
Define $\mathbf{Z}_n^N : \Lambda^N \to [K]$ to be the (randomly evolving) opinion function.
Denote the set of discordant edges by
\begin{equation*}
	\mathbf{E}_n^{N, d} := \{ \{i, j\} \in \mathbf{E}_n^N : \mathbf{Z}_n^N (i) \neq \mathbf{Z}_n^N (j)\}.
\end{equation*}
Start the dynamics with arbitrary initial condition $\mathbf{E}_0^N$ and $\mathbf{Z}_0^N$. Then, at each time step $n \in \mathbb{N}$, sample a discordant edge $\{i, j\} \in \mathbf{E}_{n-1}^{N,d}$ uniformly at random and
\begin{itemize}
	\item with probability $\frac{q}{2}$, set $\mathbf{Z}_{n}^N(i) = \mathbf{Z}_{n-1}^N (j)$
	\item with probability $\frac{q}{2}$, set $\mathbf{Z}_{n}^N(j) = \mathbf{Z}_{n-1}^N (i)$
	\item with probability $1-q$, set $\mathbf{E}_n^N = \mathbf{E}_{n-1}^N \setminus \big\{\{i,j\}\big\}$
\end{itemize}
while all other quantities remain constant.

As a first observation note that the number of opinion-$k$-individuals, $k\in K$, now does not necessarily change with every opinion update, if $K\geq3$.
Intuitively, this introduces an additional delay for the maximum opinion random walk which makes consensus harder to achieve and thus segregation more likely. Indeed, this is reflected in Figure~\ref{fig:three-opinions}(d).

Several interesting questions emerge for this generalisation. For example, is it possible for any $K$ that a strong form of segregation occurs with positive probability, i.e.\ all opinions are still present and of macroscopic order in the final state, or is only a portion of the opinions asymptotically persistent. Furthermore, it would be interesting to know what the size of the connected components in case of segregation is. Are there multiple components of macroscopic order? There are many more questions of this flavour. Figure~\ref{fig:three-opinions} provides first insights: (d) shows that, for $q=0.3$, around a third of the segregation cases are strong (black and teal lines), which correspond to the dots in the interior of (c). Since for this $q$ a large proportion of runs end up with only two opinions left, (a) shows similar effects as Figure~\ref{fig:components_split}(a), with added noise from the strong segregation cases.

In fact, we can show with the methods of this paper that strong segregation occurs with positive probability for $q$ small enough.
To this end, denote the number of individuals carrying opinion $k \in [K]$ by
\begin{equation*}
	Z_n^{N, k} = |\{i \in \Lambda^N : \mathbf{Z}_n^N (i) = k \}| .
\end{equation*}
Note that $\big(Z_n^{N,k}\big)_{n \in \mathbb{N}}$ is again for all $k$ a delayed simple symmetric random walk until $\mathbf{E}_n^{N,d} = \emptyset$ for the first time. In the case $K=2$, the delay is particularly well behaved since the delay corresponds to the edge deletions. This delay is studied in detail in the proofs of Theorem~\ref{thm:split} and Lemma~\ref{lem:vot_del_moves}.

For $K \geq 3$ the delay becomes more complex since $Z_n^{N,k}$ only jumps if an opinion update which involves an individual of opinion $k$ is performed. Thus, the delay is highly non-trivial as it depends on the precise allocation of the opinions. Despite the more complicated nature of this twofold delay, we still obtain a result in the spirit of Theorem~\ref{thm:split} for arbitrarily many opinions.

To this end, we generalize the notion of ``minority opinion'' to the case of $K$ possible opinions, i.e.
\begin{equation*}
	Z_n^{N, \textnormal{min}} := \min_{k \in [K]} Z_n^{N, k}
\end{equation*}
\begin{proposition}\label{prop:more_opinions}
	Let $K \in \N$ be the number of opinions and $\delta \in [0, K^{-1})$. Start the OV-Model such that there are at least $\lfloor K^{-1}N\rfloor$ voters of each opinion. Then there exists $q^*_K>0$ such that for any $q \leq q_K^*$
	\begin{equation*}
		\liminf_{N \to \infty} \IP_{K,q} (Z_\infty^{N, \textnormal{min}}>\delta N)>0 .
	\end{equation*}
\end{proposition}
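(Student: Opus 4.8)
\emph{Proof proposal.} The argument runs in parallel to that of Theorem~\ref{thm:split}, with the difference that we work coordinatewise in the opinion and pay for it with a union bound, which is why we only obtain a positive limit for small $q$. Fix $K$ and $\delta\in[0,K^{-1})$ and write $\rho:=K^{-1}-\delta>0$. By the observation recalled in the excerpt, each coordinate $\bigl(Z_n^{N,k}\bigr)_n$ is a delayed simple symmetric random walk, run until $\mathbf{E}_n^{N,d}=\emptyset$ for the first time: there is a (free) simple symmetric random walk $\widetilde{W}^{(k)}=\bigl(\widetilde{W}^{(k)}_m\bigr)_{m\geq0}$ started at $Z_0^{N,k}\geq\lfloor K^{-1}N\rfloor$ and a nondecreasing random clock $m\mapsto m_n^{(k)}$ with $Z_n^{N,k}=\widetilde{W}^{(k)}_{m_n^{(k)}}$ for all $n$. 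Since the clock advances only on opinion updates, $m_\infty^{(k)}\leq U_N$ for every $k$, where $U_N$ denotes the total number of opinion updates performed by the process. The plan is: (i) show that $U_N$ is at most of order $\tfrac{q}{1-q}\binom{N}{2}$ with probability tending to $1$; (ii) conclude that each $\widetilde{W}^{(k)}$, started near $K^{-1}N$, fails to drop to level $\lfloor\delta N\rfloor$ within that many steps with a probability that, for $q$ small, exceeds $1-\tfrac{1}{2K}$; (iii) combine these by a union bound over $k\in[K]$.

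For (i): edges are only ever deleted, never added, so the process performs at most $\binom{N}{2}$ deletions. Coupling the sequence of step types (opinion update with probability $q$, deletion with probability $1-q$) to an i.i.d.\ sequence extended to all times, the process is absorbed no later than the step carrying the $\binom{N}{2}$-th deletion, whence $U_N\leq V_N$, where $V_N$ is the number of updates preceding the $\binom{N}{2}$-th deletion in the i.i.d.\ sequence. Now $V_N$ is a sum of $\binom{N}{2}$ i.i.d.\ geometric variables with mean $\tfrac{q}{1-q}$, so $\IE[V_N]=\tfrac{q}{1-q}\binom{N}{2}$ and $\mathrm{Var}(V_N)=\tfrac{q}{(1-q)^2}\binom{N}{2}$; Chebyshev gives $\IP(V_N>T_N)\to0$ with $T_N:=2\tfrac{q}{1-q}\binom{N}{2}$. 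Setting $G_N:=\{U_N\leq T_N\}$ we thus have $\IP(G_N^c)\to0$. This is the $K$-opinion analogue of the estimate carried out for $K=2$ in Theorem~\ref{thm:split} and Lemma~\ref{lem:vot_del_moves}; crucially it needs only an \emph{upper} bound on the total number of updates and is therefore insensitive to the (complicated, for $K\geq3$) way opinions are allocated during the dynamics, and it uses only $|E^N|=\binom{N}{2}$, so the result is independent of the initial graph.

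For (ii) and (iii): on $G_N$ we have $m_\infty^{(k)}\leq U_N\leq T_N$ for all $k$, so $Z_\infty^{N,k}=\widetilde{W}^{(k)}_{m_\infty^{(k)}}\geq\min_{0\leq m\leq T_N}\widetilde{W}^{(k)}_m$, and hence
\begin{align*}
	\IP_{K,q}\bigl(Z_\infty^{N,\min}>\delta N\bigr)\;\geq\;1-\IP(G_N^c)-\sum_{k\in[K]}\IP\Bigl(\min_{0\leq m\leq T_N}\widetilde{W}^{(k)}_m\leq\lfloor\delta N\rfloor\Bigr).
\end{align*}
For each $k$, the reflection principle bounds the $k$-th summand by $2\,\IP\bigl(\widetilde{W}^{(k)}_{T_N}\leq\lfloor\delta N\rfloor\bigr)\leq2\,\IP\bigl(\widetilde{W}^{(k)}_{T_N}-Z_0^{N,k}\leq-\tfrac{\rho}{2}N\bigr)$ for $N$ large (using $Z_0^{N,k}\geq\lfloor K^{-1}N\rfloor$ and $\lfloor K^{-1}N\rfloor-\lfloor\delta N\rfloor\geq\tfrac{\rho}{2}N$), and Hoeffding's inequality bounds this by $2\exp\bigl(-\rho^2N^2/(8T_N)\bigr)\leq2\exp\bigl(-\tfrac{(1-q)\rho^2}{8q}\bigr)$. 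A union bound over $k\in[K]$ and the choice $q_K^*\in(0,\tfrac12)$ small enough that $2K\exp\bigl(-\rho^2/(16q)\bigr)\leq\tfrac12$ for all $q\leq q_K^*$ (using $1-q\geq\tfrac12$) then yields $\liminf_{N\to\infty}\IP_{K,q}\bigl(Z_\infty^{N,\min}>\delta N\bigr)\geq\tfrac12>0$ for every $q\leq q_K^*$.

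The only delicate step is (i): that the total number of opinion updates stays $O(N^2)$ with high probability. As indicated above, this is forced by the hard constraint $|E^N|=\binom{N}{2}$ on the edge-deletion budget and so bypasses the ``twofold delay'' difficulty the text mentions for $K\geq3$. The price is that this crude two-step argument — an upper bound on the update count followed by a union bound over the $K$ coordinate walks — only produces a positive limit for small $q$: the factor $K$ in the union bound forces the Gaussian tail $\exp(-c/q)$ to be small. Obtaining a sharp $\beta$-type formula valid for all $q\in(0,1)$, as in Theorem~\ref{thm:split}, would require jointly controlling the $K$ coupled delayed walks, which we do not attempt here.
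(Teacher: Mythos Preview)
Your argument is correct and follows essentially the same route as the paper's proof sketch: bound the total number of opinion updates by a negative binomial via the $\binom{N}{2}$ deletion budget, observe that each coordinate clock is dominated by this total, and then apply a union bound over the $K$ coordinate random walks. The only cosmetic difference is that the paper invokes the Spitzer asymptotic (Lemma~\ref{lem:spitzer}) for the two-sided exit probability from $(\delta N,(1-\delta)N)$, whereas you use the one-sided reflection principle together with Hoeffding; both yield a bound that is made smaller than $K^{-1}$ (respectively $\tfrac{1}{2K}$) by taking $q$ small.
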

A sketch of the proof is given in Subsection~\ref{sec:ProofsSegregation}. It is possible to compute $q_K^*$ explicitly, but we do not provide values for $q_K^*$, since we have no reason to doubt that Proposition~\ref{prop:more_opinions} holds for arbitrary $q \in [0,1)$ and not only $q \leq q_K^*$.
The appearance of a $q_K^*$ is rather an artefact of the proof strategy, which comes from a lack of control over the system of (dependent) delayed random walks.

\section{Construction}\label{sec:construction}
In this section we provide two explicit constructions of the OV-Model which will be useful in Section~\ref{sec:proofs}.

In the first construction of the OV-Model we sample in every step a \emph{discordant} edge after which either the sampled edge is deleted or an opinion update takes place. Alternatively, as in the second construction, one can also sample an \emph{arbitrary pair of vertices} in every step. If the corresponding edge is still present and discordant, one again performs either a deletion or an opinion update, and if it is not, nothing happens. Thus, this results in a (time) \textit{delayed OV-Model}. We define this version and couple it with a third model which we call the \textit{dynamical deletion graph}.

\paragraph*{OV-Model}
Let $N, \Lambda^N, E^N$ and $G^N$ be as in Section~\ref{sec:model_and_notation} and fix $q_N\in[0,1]$.
Let $\bfZ^N$ be a Markov chain taking values in $\cP (\Lambda^N \cup E^N)$ and let $\bfE_n^{N, d}$ be as defined in Section~\ref{sec:model_and_notation}.
An alternative way to state the dynamics described in Section \ref{sec:model_and_notation} is as follows:
Let $(U^N_n)_{n \in \N}$ and $(V^N_n)_{n \in \N}$ be two independent sequences of i.i.d.\ random variables such that $U^N_1\sim \text{Ber}(q_N)$ and $V^N_1\sim \text{Ber}\bigl(\tfrac{1}{2}\bigr)$.
Start the process with arbitrary initial state $\bfZ^N_0$. If at time $n=1,2,\ldots$ $\bfE^{N,d}_{n-1}\neq\emptyset$, choose a discordant edge $\{x,y\} \in \bfE_{n-1}^{N, d}$ uniformly at random and independently of everything else. Then,
\begin{itemize}
	\item if $U_n^N = 0$, remove $\{x,y\}$, i.e.\ set $\bfZ_{n}^N = \bfZ_{n-1}^N \setminus \{\{x,y\}\}$, and
	\item if $U_n^N = 1$, then
	\begin{itemize}
		\item if $V_n^N = 1$, set  $\bfZ_n^N = \bfZ_{n-1}^N \cup \{x,y\}$ and \vspace{.5em}
		\item if $V_n^N = 0$, set  $\bfZ_n^N = \bfZ_{n-1}^N \setminus \{x,y\}$.
	\end{itemize}
\end{itemize}
On the other hand, if $\bfE^{N,d}_{n-1}=\emptyset$, set $\bfZ^N_n=\bfZ^N_{n-1}$. We denote the time of absorption by
\begin{equation*}
	\tau_{\textnormal{abs}} = \inf \{ k \in \N : \bfE^{N,d}_k = \emptyset \}.
\end{equation*}
With this construction, some quantities of the process can be nicely expressed in terms of $U^N$ and $V^N$. Specifically, for all $n\leq \tau_{\textnormal{abs}}$ \textit{the number of individuals having opinion $1$} at time  $n$, is given by
\begin{equation*}
	Z_n^N = Z_0^N + \sum_{i=1}^n U_i^N \bigr( 2 V_i^N - 1\bigl)
\end{equation*}
and the \textit{number of opinion updates} until time $n$ is given by $S_n^{N, \textnormal{op}} = \sum_{i=1}^n U_i^N$. Likewise, the number of \textit{edge deletions} until time $n$ is $S_n^{N, \textnormal{del}} = n - S_n^{N, \textnormal{op}}$.

\begin{remark}\label{rem:ExtensionBeyondAbsorption}
	Note that we abuse notation and define $Z^N, S^{N,\textnormal{op}}$ and $S^{N,\textnormal{del}}$ for all times $n\in\N$, even though $\mathbf Z^N$ stays constant for $n\geq\tau_{\textnormal{abs}}$. This is feasible since these processes as described above solely depend on the continuing sequences $U^N$ and $V^N$. As we will see in the proofs to come, it will be convenient to have $Z^N, S^{N, \textnormal{op}}, S^{N, \textnormal{del}}$ keep evolving after absorption of $\bfZ^N$. Notably, after $\tau_{\textnormal{abs}}$ they lose their interpretation as opinions or moves in the OV-Model.
\end{remark}

\paragraph*{Delayed OV-Model and Coupling with the Dynamical Deletion 
	Graph}
Together with the delayed OV-Model, which is denoted by $\bfhZ^{N}=(\bfhZ^{N}_n)_{n \in \N_0}$, we define a dynamical random graph $(\Lambda^N, \cE^N)$ where the stochastic process $\cE^N=(\cE^N_n)_{n\in \N_0}$ models its edge set which evolves randomly over time. We call $(\Lambda^N, \cE^N)$ the \emph{dynamical deletion graph}.
The goal of the following construction is to obtain a coupling between these processes such that at all times $n$ the graph $(\Lambda^N, \cE^N_n)$ is a subgraph of $\bfhG^N_n=(\Lambda^N, \bfhZ^N_n\cap E^N)$.
This coupling and further auxiliary processes obtained from this construction will be central to later proofs.

Let $(\cE_n^N)_{n \in \N_0}$ and $(\bfhZ^{N}_n)_{n \in \N_0}$ be discrete-time Markov chains taking values in $\cP (E^N)$ and $\cP (\Lambda^N \cup E^N)$, respectively.
Initialise both processes with the same edge sets, i.e. $\cE_0^N = \bfhZ^{N}_0 \cap E^N$.
Denote the set of all discordant edges of the process $\bfhZ^{N}$ at time $n$ by

\begin{equation*}
	\bfhE_n^{N, d} \defeq \Big\{ \{x,y\} \in \bfhZ^{N}_n : \big|\bfhZ^N_n \cap \{x,y\}\big|=1\Big\}.
\end{equation*}
Analogously to the non-delayed construction, let $(\hU^N_n)_{n \in \N}$ and $(\hV^N_n)_{n \in \N}$ be two independent sequences of i.i.d.\ random variables such that $\hU^N_1\sim \text{Ber}(q_N)$ and $\hV^N_1\sim \text{Ber}(\tfrac{1}{2})$.
The auxiliary processes $(X^{N,\textnormal{op}}_n)_{n\in\N}$ and $(X^{N,\textnormal{e}}_n)_{n\in\N}$ defined in the dynamics below will become useful later (cf.\ Remark~\ref{rem:delayed_ov_model}).

At time $n =1,2,\ldots$ choose any pair of vertices $\{x,y\} \in E^N$ uniformly at random independently of everything else.
\begin{itemize}
	\item If $\hU_n^N = 0$, set $X^{N, \textnormal{op}}_n = 0$ and set $\cE^N_{n} = \cE^N_{n-1} \setminus \{\{x,y\}\}$, i.e.\ remove $\{x,y\}$ from $\cE^N$ if it is still present. Also,
	\begin{itemize}
		\item if $\{x,y\} \in \bfhE_{n-1}^{N, d}$, then remove it from $\bfhZ^{N}$ as well, i.e.\ set $\bfhZ^{N}_n = \bfhZ^{N}_{n-1} \setminus \{\{x,y\}\}$ and $X_n^{N,\textnormal{e}} = 1$ or\vspace{.5em}
		\item if $\{x,y\} \notin \bfhE_{n-1}^{N, d}$, then set $\bfhZ^{N}_n = \bfhZ^{N}_{n-1}$ and set $X_n^{N,\textnormal{e}} = 0$.
	\end{itemize}
	\item If $\hU_n^N = 1$, then set $\cE^N_{n} = \cE^N_{n-1}$ and
	\begin{itemize}
		\item if $\{x,y\} \in \bfhE_{n-1}^{N, d}$, set $X^{N, \textnormal{e}}_n = 1$ and $X^{N, \textnormal{op}}_n = 1$ and further
		\begin{itemize}
			\item if $\hV_n^N = 1$, set $\bfhZ^{N}_{n} = \bfhZ^{N}_{n-1} \cup \{x,y\}$ or\vspace{.5em}
			\item if $\hV_n^N = 0$, set $\bfhZ^{N}_{n} = \bfhZ^{N}_{n-1} \setminus \{x,y\}$,\vspace{.5em}
		\end{itemize}
		\item if $\{x,y\} \notin \bfhE_{n-1}^{N, d}$ set $\bfhZ^{N}_{n} = \bfhZ^{N}_{n-1}$ and set $X^{N, \textnormal{e}}_n = 0$ and $X^{N, \textnormal{op}}_n = 0$.
	\end{itemize}
\end{itemize}
Note that in the original OV-Model, in every time step a random \textit{discordant} edge is sampled, while in the delayed OV-Model, which we just defined, \textit{any} random pair of vertices $\{x,y\} \in E$ is sampled. Thus, it becomes apparent that it is just a time delayed version of the former process. In fact the jump chain of the delayed OV-Model $\bfhZ^N$ has the same distribution as the OV-Model $\bfZ^N$. 

From the definition of the two processes it is clear that $(\Lambda^N, \cE^N_n)$ is a subgraph of $(\Lambda^N, \bfhZ^{N}_n \cap E^N)$ at all times $n \in \N_0$.
This will allow us to study the connectedness of the delayed OV-Model by understanding the connectedness of $\cE^N$ in certain regimes, which can be achieved via comparison to \ER graphs.

\begin{remark}\label{rem:delayed_ov_model}
	From the construction it becomes clear that $X^{N, \textnormal{e}}_n=1$ whenever a change in the delayed OV-Model occurs and $X^{N, \textnormal{op}}_n = 1$ whenever an opinion update takes place. In fact, it is straightforward to see that
	\begin{equation*}
		\IP (X_n^{N,\textnormal{e}} = 1 | \bfhZ_{n-1}^N) = \frac{2 \hE_{n-1}^{N, d}}{N(N-1)}\quad \text{and} \quad \IP (X_n^{N, \textnormal{op}} = 1 | \bfhZ_{n-1}^N) = \frac{2 q_N \hE_{n-1}^{N, d}}{N(N-1)},
	\end{equation*}
	where $\hE_n^{N,d} := |\bfhE_{n}^{N, d}|$ denotes the number of discordant edges at time $n$.
	We will use this property for the proofs in Section~\ref{sec:proofs}.
\end{remark} 

We end this section introducing some further notation for the delayed OV-Model, which we will need later. Analogously to the non-delayed case we denote the minimal degree of the graph $\bfhG_n^N$ of the delayed OV-Model at time $n$ by 
\begin{equation*}
	\hD_n^{N, \textnormal{min}} = \min_{x \in \Lambda^N} | \{y \in \Lambda^N : \{x,y\} \in \bfhZ_n^N \} | .
\end{equation*}
The number of individuals holding opinion one is denoted by $\hZ^N_n$ and the number of individuals holding the minority opinion is denoted by $\hZ^{N, \textnormal{min}}_n$ and defined as in the non-delayed case.
Denote the number of opinion updates happening between times $k$ and $m$, $k<m$, by
\begin{equation*}
	\hS_{[k,m]}^{N, \textnormal{op}} \defeq \sum_{n=k}^m X_n^{N, \textnormal{op}}
\end{equation*}
and set $\hS_m^{N, \textnormal{op}} = \hS_{[1,m]}^{N, \textnormal{op}}$.
Note that for the opinions it holds that
\begin{equation*}
	\hZ^{N}_n = \hZ^N_0 + \sum_{i=1}^n X_i^{N, \textnormal{op}} \big(2\hV_i^N - 1\big).
\end{equation*}
The process $\hZ^N$ is a delayed simple symmetric random walk and the distribution of $\hZ^N_n$ equals the distribution of a simple symmetric random walk at time $\hS_n^{N, op}$.

\section{Proofs}\label{sec:proofs}
Before we start with the proof of Theorem \ref{thm:split} we want to note that the asymptotics of the time to absorption at $\{0, N\}$ of a simple symmetric random walk started in $\lfloor \frac{N}{2} \rfloor$ given in \eqref{eq:spitzer} were originally only formulated in \cite{Spitzer1964} for $N \to \infty$ with $N$ even.
For the proof of Theorem \ref{thm:split}, we will need a minor generalization:
\begin{lemma}\label{lem:spitzer}
	Let $(S_n)_{n \in \N_0}$ be a simple symmetric random walk started in $0$. For $\delta >0$, let 
	\begin{equation*}
		\tau_N^\delta \defeq \inf \Big\{n \geq 0: S_n \notin \left(-\delta\tfrac{N}{2},  \delta\tfrac{N}{2}\right)\Big\} .
	\end{equation*}
	Then
	\begin{equation}\label{lem:spitzer_fine}
		\lim_{N \to \infty} \IP \big( \tau_{N}^\delta > N^2 x\big) = \beta \left( \tfrac{x}{\delta^2}\right) .
	\end{equation}
\end{lemma}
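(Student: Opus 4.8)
The plan is to reduce the statement to the cited asymptotics \eqref{eq:spitzer} by an \emph{exact} distributional identity followed by a rescaling argument. Since $(S_n)$ is integer-valued and moves in steps of $\pm 1$, the event $S_n \notin (-\delta\tfrac N2, \delta\tfrac N2)$ is the event $|S_n| \geq k_N$ with $k_N \defeq \lceil \delta N/2\rceil$, and at the \emph{first} such time one necessarily has $|S_n| = k_N$. Hence $\tau_N^\delta$ is exactly the first hitting time of $\{-k_N, k_N\}$ by the walk started at $0$. Translating the walk by $+k_N$, this is the first hitting time of $\{0, 2k_N\}$ started from $k_N = \lfloor 2k_N/2\rfloor$; in the notation of \eqref{eq:spitzer} this means $\tau_N^\delta$ has the same law as $\tau_{2k_N}$. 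The point of landing on $2k_N$ rather than on $k_N$ or $\delta N$ is that $2k_N$ is even, which is precisely the regime in which \eqref{eq:spitzer} is quoted.

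Next I would match the time scales. Write $N^2 x = (2k_N)^2 y_N$ with $y_N \defeq N^2 x / (2k_N)^2$. Because $k_N = \delta N/2 + O(1)$ we have $2k_N/N \to \delta$, hence $y_N \to x/\delta^2$ and also $2k_N \to \infty$ as $N\to\infty$. Thus
\[
	\IP\bigl(\tau_N^\delta > N^2 x\bigr) = \IP\bigl(\tau_{2k_N} > (2k_N)^2 y_N\bigr),
\]
and it remains to show the right-hand side converges to $\beta(x/\delta^2)$, i.e.\ to upgrade the pointwise convergence in \eqref{eq:spitzer} to convergence along the sequence $y_N$.

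For this, fix $\varepsilon > 0$; for all large $N$ one has $y_N \in (x/\delta^2 - \varepsilon,\, x/\delta^2 + \varepsilon)$, and since $y \mapsto \IP(\tau_M > M^2 y)$ is nonincreasing for every fixed $M$,
\[
	\IP\bigl(\tau_{2k_N} > (2k_N)^2 (x/\delta^2 + \varepsilon)\bigr) \leq \IP\bigl(\tau_{2k_N} > (2k_N)^2 y_N\bigr) \leq \IP\bigl(\tau_{2k_N} > (2k_N)^2 (x/\delta^2 - \varepsilon)\bigr).
\]
Letting $N\to\infty$ along the even indices $2k_N$ and applying \eqref{eq:spitzer} to both bounds gives $\beta(x/\delta^2 + \varepsilon) \leq \liminf_N \leq \limsup_N \leq \beta(x/\delta^2 - \varepsilon)$; sending $\varepsilon \downarrow 0$ and using continuity of $\beta$ yields the claim. (The case $x = 0$ is trivial: both sides equal $1$ for $N$ large.)

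I do not expect a genuine obstacle here — the lemma is essentially bookkeeping around parity and the scaling constant. The only points deserving care are (i) arranging the reduction so that the Spitzer parameter $2k_N$ is even, since \eqref{eq:spitzer} is only quoted for even index, and (ii) absorbing the discrepancy between $(2k_N)^2/\delta^2$ and $N^2$ through the monotonicity-plus-continuity sandwich rather than pretending $\delta N$ is an integer.
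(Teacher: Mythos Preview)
Your proof is correct and follows essentially the same route as the paper's (very terse) argument: reduce to the Spitzer asymptotics and absorb the scaling discrepancy via monotonicity and continuity of $\beta$. The one minor difference is that the paper invokes monotonicity to drop the evenness hypothesis in \eqref{eq:spitzer} directly, whereas you sidestep parity altogether by identifying $\tau_N^\delta$ with $\tau_{2k_N}$ so that the Spitzer parameter is automatically even; this is a clean touch and makes the reduction slightly more explicit than the paper's sketch.
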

\begin{proof}
	By a monotonicity argument one can drop the assumption on $N$ being even in the prelimit in \cite{Spitzer1964}.
	A substitution yields the quadratic dependence on $\delta$.
\end{proof}

This result demonstrates that the hitting times $\tau_N^\delta$ live on the $N^2$ scale in the sense that $\tau_N^\delta/N^2$ converges in distribution to a non-trivial limit.
The next lemma shows that the boundary hitting time is asymptotically smaller when the random walk is not centred initially.

\begin{lemma}\label{lem:non_centered_hitting_times}
	Let $z \in \Z$ and $(S_n)_{n \in \N_0}$ be a simple symmetric random walk started at $0$. For $\delta > 0$, let
	\begin{equation*}
		\tau_N^\delta(z) \defeq \inf \Big\{n \geq 0: S_n+z \notin \left(-\delta\tfrac{N}{2},  \delta\tfrac{N}{2}\right)\Big\}. 
	\end{equation*}
	Then,
	\begin{equation*}
		\limsup_{N \to \infty} \IP \big( \tau_N^\delta (z) > N^2 x \big) \leq \beta (\tfrac{x}{\delta^2}).
	\end{equation*}
\end{lemma}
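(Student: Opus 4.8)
The plan is to reduce to Lemma~\ref{lem:spitzer} by noting that starting the walk at a fixed point $z$ is the same as letting the \emph{centred} walk exit a translated interval of the same width, and that a fixed translation is absorbed by an arbitrarily small widening of the rescaled interval. Concretely, since $S_n + z \notin \left(-\delta\tfrac{N}{2},\delta\tfrac{N}{2}\right)$ holds if and only if $S_n \notin \left(-\delta\tfrac{N}{2} - z,\ \delta\tfrac{N}{2} - z\right)$, we may rewrite $\tau_N^\delta(z) = \inf\bigl\{n \ge 0 : S_n \notin \left(-\delta\tfrac{N}{2} - z,\ \delta\tfrac{N}{2} - z\right)\bigr\}$, i.e.\ the exit time of the \emph{same} walk $(S_n)$ started at $0$ from an interval of width $\delta N$ that is merely shifted by $-z$.

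Now fix any $\delta'' > \delta$. Because $z$ is fixed while $(\delta''-\delta)\tfrac{N}{2} \to \infty$, for all $N$ large enough (depending on $z, \delta, \delta''$) one has the inclusion $\left(-\delta\tfrac{N}{2} - z,\ \delta\tfrac{N}{2} - z\right) \subseteq \left(-\delta''\tfrac{N}{2},\ \delta''\tfrac{N}{2}\right)$; indeed both required inequalities $\delta''\tfrac{N}{2} \ge \delta\tfrac{N}{2} + z$ and $\delta''\tfrac{N}{2} \ge \delta\tfrac{N}{2} - z$ reduce to $(\delta''-\delta)\tfrac{N}{2} \ge |z|$. Since the two exit times are computed for one and the same path of $(S_n)$, and leaving the larger interval forces having already left the smaller one, we get the pathwise bound $\tau_N^\delta(z) \le \tau_N^{\delta''}$ for all large $N$, where $\tau_N^{\delta''}$ is exactly the quantity from Lemma~\ref{lem:spitzer}. (No hypothesis on the starting point is needed for this monotonicity; alternatively, one may simply observe that if $|z| \ge \delta\tfrac{N}{2}$ then $\tau_N^\delta(z) = 0$ and there is nothing to prove.)

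Consequently $\IP\bigl(\tau_N^\delta(z) > N^2 x\bigr) \le \IP\bigl(\tau_N^{\delta''} > N^2 x\bigr)$ for all large $N$, and taking $\limsup_{N\to\infty}$ and applying Lemma~\ref{lem:spitzer} yields $\limsup_{N\to\infty}\IP\bigl(\tau_N^\delta(z) > N^2 x\bigr) \le \beta\bigl(x/(\delta'')^2\bigr)$. As this holds for every $\delta'' > \delta$ and $\beta$ is continuous and decreasing, letting $\delta'' \downarrow \delta$ gives $\limsup_{N\to\infty}\IP\bigl(\tau_N^\delta(z) > N^2 x\bigr) \le \beta\bigl(x/\delta^2\bigr)$, as claimed. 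There is essentially no obstacle here: the only point worth flagging is that the $O(1)$ offset $z$ is being compared against a $\Theta(N)$ interval, which is precisely why an arbitrarily small loss $\delta''-\delta$ in the interval width is enough to make $z$ asymptotically irrelevant; one could instead prove a version of Lemma~\ref{lem:spitzer} with $N$-dependent half-width $\delta_N\tfrac{N}{2}$, $\delta_N \to \delta$, but routing through a fixed $\delta''$ keeps the argument self-contained.
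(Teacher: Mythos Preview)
Your proof is correct and takes a genuinely different route from the paper's argument. The paper proceeds by a reflection coupling: for even $z$, it mirrors a second walk $S'_n=-S_n$ until the two paths meet, after which $S'_n = S_n+z$, yielding the pathwise comparison $\tau_N^\delta(z)\le\tau_N^\delta$ for the \emph{same} $\delta$; for odd $z$ a first-step analysis reduces to the even case at the cost of a single time step. Your argument instead absorbs the fixed offset $z$ by widening the interval to half-width $\delta''\tfrac{N}{2}$, giving $\tau_N^\delta(z)\le\tau_N^{\delta''}$ for large $N$, and then recovers the target bound via continuity of $\beta$ as $\delta''\downarrow\delta$.

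Both approaches are valid; yours is more elementary (no coupling, no parity split) and makes transparent the point that a bounded shift is asymptotically negligible on the $N$ scale. The paper's coupling is slightly sharper in that it produces a stochastic inequality $\tau_N^\delta(z)\preceq\tau_N^\delta$ valid for every $N$ (up to the $+1$ for odd $z$), not only an asymptotic one, but that extra strength is not used downstream.
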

\begin{proof}
	For $z\not\in \left(-\delta\tfrac{N}{2},  \delta\tfrac{N}{2}\right)$, the statement is trivial. If $z$ is even, then one can couple $S_n$ with another simple symmetric random walk $S'_n$ such that
	\begin{equation*}
		S'_n =-S_n \text{ for all }n \leq \inf\{k \geq 0 : S_n+z = S_n'\} 
	\end{equation*}
	and
	\begin{equation*}
		S'_n =S_n + z\text{ for all }n > \inf\{k \geq 0 : S_n+z = S_n'\}. 
	\end{equation*}
	This way, for all $n \leq \tau_N^\delta (z)$ it holds that
	\begin{equation*}
		|S_n+z - \delta \tfrac{N}{2} | \wedge |S_n+z + \delta \tfrac{N}{2} | \leq |S'_n - \delta \tfrac{N}{2} | \wedge |S'_n + \delta \tfrac{N}{2} |
	\end{equation*}
	and hence $\tau_N^\delta (z) \leq \tau_N^\delta$ which implies the statement for $z$ even.

	For $z$ odd, a first step analysis shows that
	\begin{align*}
		\tau_N^\delta (z) 
		&\overset d= 1 + \1_{\{S_1=-1\}}\tau_N^\delta (z-1) + \1_{\{S_1=1\}}\tau_N^\delta (z+1)
		\preceq  1 + \tau_N^\delta
	\end{align*}
	by the first part, where $\preceq$ denotes the stochastic order with respect to $\IP$.
	Hence,
	\begin{align*}
		\IP(\tau_N^\delta(z) > N^2x)
		&\leq \IP(\tau_N^\delta > N^2x - 1) 
		= \IP(\tau_N^\delta > N^2(x - \tfrac1{N^2})).
	\end{align*}
	Considering continuity and monotonicity of $\beta$ concludes the proof.
\end{proof}

In Section~\ref{sec:ProofConsensus} we consider a random walk running for a time in $\omega(N^2)$. Heuristically, this should hit the boundary with high probability. For the sake of completeness we show this fact in the following lemma.
\begin{lemma}\label{lem:AsymptoticsForDifferentScale}
	Let $z\in \Z$ and let $(S_n)_{n \in \N_0}$ be a simple symmetric random walk started at $0$. Further, let $(a_N)_{N\in \N}$ be a positive sequence such that $a_N\to \infty$ as $N\to \infty$. Then, for $\delta>0$, it follows that
	\begin{equation*}
		\lim_{N\to \infty} \IP\big(\tau_{N}^\delta(z) > a_NN^2\big)=0.
	\end{equation*}
\end{lemma}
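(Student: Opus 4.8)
The plan is to reduce the claim to Lemma~\ref{lem:non_centered_hitting_times} together with the decay of $\beta$ at infinity. First I would record the elementary fact that $\beta(x)\to 0$ as $x\to\infty$: in the series representation in \eqref{eq:spitzer} the terms $\tfrac{1}{2j+1}e^{-\frac{\pi^2}{2}(j+1)^2x}$ are positive and strictly decreasing in $j$ for every fixed $x>0$, so the alternating sum is squeezed between $0$ and its first term, giving $0\le\beta(x)\le\tfrac{4}{\pi}e^{-\pi^2x/2}\xrightarrow{\,x\to\infty\,}0$.

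Next, fix an arbitrary $x>0$. Since $a_N\to\infty$, there is an $N_0$ with $a_N\ge x$ for all $N\ge N_0$, and therefore the event inclusion $\{\tau_N^\delta(z)>a_NN^2\}\subseteq\{\tau_N^\delta(z)>xN^2\}$ holds for all $N\ge N_0$, whence $\IP(\tau_N^\delta(z)>a_NN^2)\le\IP(\tau_N^\delta(z)>xN^2)$ for those $N$. Taking $\limsup_{N\to\infty}$ on both sides and invoking Lemma~\ref{lem:non_centered_hitting_times} yields
\begin{equation*}
	\limsup_{N\to\infty}\IP\big(\tau_N^\delta(z)>a_NN^2\big)\le\beta\!\left(\tfrac{x}{\delta^2}\right).
\end{equation*}

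Finally, the left-hand side does not depend on $x$, so I let $x\to\infty$ and use $\beta(x/\delta^2)\to 0$ to conclude that the $\limsup$ is $\le 0$; since probabilities are nonnegative, the $\limsup$ and hence the limit equals $0$, which is the assertion.

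Because the argument uses only a monotone inclusion of events plus the already-established Lemma~\ref{lem:non_centered_hitting_times}, there is essentially no real obstacle here; the single point needing a line of justification is the $x\to\infty$ decay of $\beta$, which is immediate from its defining alternating series as above. (One could also bypass Lemma~\ref{lem:non_centered_hitting_times} and argue directly: on $\{\tau_N^\delta(z)>a_NN^2\}$ the walk stays in an interval of width $\delta N$ for $a_NN^2$ steps, and splitting this time span into $\lfloor a_N\rfloor$ independent blocks of length $N^2$, each of which exits such an interval with probability bounded below by some $c_\delta>0$ uniformly in $N$, gives the bound $(1-c_\delta)^{\lfloor a_N\rfloor}\to 0$; but routing through the cited lemma is shorter.)
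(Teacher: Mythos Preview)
Your proof is correct and follows essentially the same route as the paper: fix $x>0$, use the inclusion $\{\tau_N^\delta(z)>a_NN^2\}\subset\{\tau_N^\delta(z)>xN^2\}$ for large $N$, apply Lemma~\ref{lem:non_centered_hitting_times}, and then let $x\to\infty$ using $\beta(y)\to 0$. Your justification of the decay of $\beta$ via the alternating-series bound is a bit more explicit than the paper's, which simply appeals to the definition in~\eqref{eq:spitzer}.
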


\begin{proof}
	Again, for $z\not\in \left(-\delta\tfrac{N}{2},  \delta\tfrac{N}{2}\right)$ the statement is trivial. Hence, assume that $z\in \left(-\delta\tfrac{N}{2},  \delta\tfrac{N}{2}\right)$.
	Since $a_N\to \infty$ it follows by monotonicity and Lemma~\ref{lem:non_centered_hitting_times} that
	\begin{equation*}
		\limsup_{N\to\infty} \IP \big(\tau_{N}^\delta(z) > a_NN^2\big) 
		\leq
		\limsup_{N\to \infty} \IP(\tau_{N}^\delta(z) > xN^2)
		\leq \beta \left( \tfrac{x}{\delta^2}\right)
	\end{equation*}
	for all $x>0$. By the definition of $\beta$ in equation \eqref{eq:spitzer} it is clear that $\beta(y)\to 0$ as $y\to\infty$.
\end{proof}
\subsection{Proofs for segregation and almost consensus}\label{sec:ProofsSegregation}
In this section, we prove Theorems~\ref{thm:split}, \ref{thm:split2} as well as Propositions~\ref{prop:almost_consensus} and \ref{prop:more_opinions}.

\begin{proof}[Proof of Theorem~\ref{thm:split}]
	In this proof we just need to consider some macroscopic quantities defined in Section~\ref{sec:construction} and do not need to track the complete underlying graph structure. To be precise, we consider
	\begin{itemize}
		\item $Z_n^N$ the number of individuals with opinion $1$ at time $n$,
		\item $S_n^{N, \textnormal{op}}$ the number of opinion updates until time $n$ and
		\item $S_n^{N, \textnormal{del}}$ the number of edge deletions until time $n$.
	\end{itemize}
	As already mentioned in Remark~\ref{rem:ExtensionBeyondAbsorption} we use the extensions of the processes $Z^N, S^{N, \textnormal{op}}$ and $S^{N, \textnormal{del}}$ beyond the absorption time and we will make use of the fact that $(Z_n^N)_{n \in \N_0}$ is equal in law to $(Y_{S_n^{N, \textnormal{op}}} + Z_0^N)_{n \in \N_0}$ where $Y$ is a simple symmetric random walk independent of $S_n^{N, \textnormal{op}}$ with initial state $Y_0 = 0$.

	A priori, the macroscopic quantities do not expose much information about the offended voter model.
	However, if we know that $Z^N$ has never left the interval $(c'N,(1-c')N )$
	up until time $n$ while $S_n^{N,\textnormal{del}}> \tbinom{N}{2}$, then the offended voter model must have terminated in the event of segregation before time $n$ since the edge set only contains $\tbinom{N}{2}$ edges.
	Moreover, it holds that
	\begin{equation}\label{eq:SuffSegregationCrit}
		\begin{aligned}
			\bigcup_{n=1}^\infty \Big\{ S_n^{N, \textnormal{del}}> \tbinom N2 &\text{ and } Z_k^N \in (c'N,(1-c')N )
			\text{ for } k\leq n \Big\}\\
			&\subset \Big\{ Z_\infty^{N, \textnormal{min}} > c'N \Big\} .
		\end{aligned}
	\end{equation}
	for any $c' < c\leq\tfrac{1}{2}$. In Lemma~\ref{lem:vot_del_moves} below we show, for any $r>1$, that if $\big\lceil r \frac{q}{1-q} \frac{N^2}{2}\big\rceil$ opinion updates took place, then with high probability at least $\tbinom{N}{2}$ deletions have happened. Thus, we use \eqref{eq:SuffSegregationCrit}, to see that
	\begin{alignat*}{2}
		& \liminf_{N \to \infty} \IP \big( Z_\infty^{N, \textnormal{min}} > c'N \big)\\ 
		\geq &\liminf_{N \to \infty} \IP \biggl( \bigcup_{n=1}^\infty \Big\{S_n^{N,\textnormal{del}}> \tbinom{N}{2}, \ S_n^{N, \textnormal{op}}= \Big\lceil r \tfrac{q}{1-q}\tfrac{N^2}{2} \Big\rceil, Z_k^N \in (c'N,(1-c')N ) \text{ for all } k\leq n \Big\} \biggr)\\
		\geq &\liminf_{N \to \infty} \IP \biggl( \bigcup_{n=1}^\infty \Big\{S_n^{N,\textnormal{op}}= \Big\lceil r \tfrac{q}{1-q}\tfrac{N^2}{2} \Big\rceil,  Z_k^N \in (c'N,(1-c')N ) \text{ for all } k\leq n \Big\} \biggr)\\
		&\qquad-\limsup_{N \to \infty} \IP \biggl( \bigcup_{n=1}^\infty \Big\{S_n^{N,\textnormal{del}} \leq \tbinom{N}{2}, S_n^{N,\textnormal{op}}= \Big\lceil r \tfrac{q}{1-q}\tfrac{N^2}{2} \Big\rceil \Big\} \biggr)
	\end{alignat*}
	for any $r > 1$. The limit superior equals $0$ which follows from Lemma~\ref{lem:vot_del_moves} and by the monotonicity of $S_n^{N,\textnormal{del}}$ and $S_n^{N,\textnormal{op}}$. Next we use that $Z_k^N$ has the same distribution as $Y_{S_k^{N, \textnormal{op}}} + Z_0^N$. Therefore, letting $T^N_r=\inf\{n:S_n^{N,\textnormal{op}}=\lceil r\frac{q}{1-q}\frac{N^2}{2}\rceil\}$ (cf.\ Lemma~\ref{lem:vot_del_moves}), the whole expression equals
	\begin{align*}
		&\liminf_{N \to \infty} \IP \left( \bigcup_{n=1}^\infty
		\{T^N_r=n\}\cap
		\Big\{
		Y_k + Z_0^N
		\in (c'N,(1-c')N )  \text{ for } k\leq \Big\lceil r\tfrac{q}{1-q}\tfrac{N^2}{2} \Big\rceil \Big\} \right)\\
		&= \liminf_{N \to \infty} \IP \left(Y_k + Z_0^N  \in (c'N,(1-c')N )  \text{ for } k\leq \Big\lceil r\tfrac{q}{1-q}\tfrac{N^2}{2} \Big\rceil \right).
	\end{align*}
	By symmetry we can assume without loss of generality that opinion $1$ is the minority opinion at time $0$. Then the last expression equals 
	\begin{align*}
		&\liminf_{N \to \infty} \IP \left(Y_k \in ( c'N  - Z_0^N ,  (1-c')N  - Z_0^N ) \text{ for } k\leq \Big\lceil r\tfrac{q}{1-q}\tfrac{N^2}{2} \Big\rceil \right)\\
		&\geq \liminf_{N \to \infty} \IP \left(Y_k \in \big( (c' - c)N , (c-c')N ) \text{ for } k\leq \Big\lceil r\tfrac{q}{1-q}\tfrac{N^2}{2}\Big\rceil \right)\\
		&\geq \beta \left( \tfrac{r}{2} \tfrac{q}{1-q} \tfrac{1}{4 (c-c')^2}\right).
	\end{align*}
	For the first inequality, we consider $Z_0^N-c'N$ as the right border of the interval, which is strictly smaller than $(1-c')N-Z_0^N$ since $c'<c$. Furthermore, the probability is then minimized by choosing $Z_0^N=\lceil c N\rceil$. For the last inequality recall the definition of $\beta$ in \eqref{eq:spitzer} and Lemma~\ref{lem:spitzer}.
	
	Letting $r \searrow 1$, the desired result follows from monotonicity and continuity of $\beta$. Note that the monotonicity of $\beta$ follows from that of the prelimit in Lemma \ref{lem:spitzer}, $\IP \left(\tau_N^\delta > N^2 x\right)$.
\end{proof}

\begin{lemma}\label{lem:vot_del_moves}
	Let $r>1$, $q_N=q\in (0,1)$ or $q_N\to 0$ as $N\to \infty$. Further, let $\alpha_N=\max\{q_N,\frac1N\}$ and denote by $T_{r}^N$ the first time that the number of opinion updates hits $\big\lceil r \tfrac{\alpha_N}{1-q_N} \tfrac{N^2}{2} \big\rceil$, i.e.\
	\begin{equation}
		T_{r}^N = \inf \left\{n \in \N: S_n^{N, \textnormal{op}} =  \Big\lceil r \tfrac{\alpha_N}{1-q_N} \tfrac{N^2}{2} \Big\rceil \right\}.
	\end{equation}
	Then, w.h.p.\ $T_{r}^N \geq \big\lceil r \frac{\alpha_N}{1-q_N} \frac{N^2}{2}\big\rceil + \frac{N^2}{2}$ as $N \to \infty$
	and thus, $S^{N,\textnormal{del}}_{T^N_r}\geq \frac{N^2}2$ w.h.p., i.e.\ until time $T^N_r$ at least $N^2/2$ edge deletions occurred with high probability.
\end{lemma}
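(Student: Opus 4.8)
The plan is to use Remark~\ref{rem:ExtensionBeyondAbsorption}: the extended process $S^{N,\textnormal{op}}_n = \sum_{i=1}^n U_i^N$ is, for \emph{all} $n$, a sum of i.i.d.\ $\mathrm{Ber}(q_N)$ variables, so $T_r^N$ is nothing but the first hitting time of the level $m_N \defeq \big\lceil r\frac{\alpha_N}{1-q_N}\frac{N^2}{2}\big\rceil$ by a binomial counting process. First I would reduce the desired lower bound on $T_r^N$ to a one-sided deviation estimate for a binomial at a \emph{deterministic} time. Since $S^{N,\textnormal{op}}$ is non-decreasing with increments in $\{0,1\}$ and $S^{N,\textnormal{op}}_0 = 0 < m_N$ (note $m_N \ge \frac{rN}{2} \to \infty$ because $\alpha_N \ge \frac1N$), one has $\{T_r^N \le t\} = \{S^{N,\textnormal{op}}_t \ge m_N\}$ for every deterministic $t$. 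Taking $t_N \defeq \lfloor m_N + \frac{N^2}{2} \rfloor$, the complement of the target event satisfies $\{T_r^N < m_N + \frac{N^2}{2}\} \subseteq \{S^{N,\textnormal{op}}_{t_N} \ge m_N\}$ with $S^{N,\textnormal{op}}_{t_N} \sim \mathrm{Bin}(t_N, q_N)$. Once $\IP(S^{N,\textnormal{op}}_{t_N} \ge m_N) \to 0$ is shown, both claims follow: $T_r^N \ge m_N + \frac{N^2}{2}$ w.h.p., and hence $S^{N,\textnormal{del}}_{T_r^N} = T_r^N - S^{N,\textnormal{op}}_{T_r^N} = T_r^N - m_N \ge \frac{N^2}{2}$ w.h.p.

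For the binomial tail I would apply Chebyshev after two estimates. \emph{Mean gap:} $\IE[S^{N,\textnormal{op}}_{t_N}] = t_N q_N \le (m_N + \tfrac{N^2}{2})q_N$, and since $m_N \ge r\frac{\alpha_N}{1-q_N}\frac{N^2}{2}$ we obtain $m_N - \IE[S^{N,\textnormal{op}}_{t_N}] \ge m_N(1-q_N) - q_N\frac{N^2}{2} \ge (r\alpha_N - q_N)\frac{N^2}{2} \ge (r-1)\alpha_N\frac{N^2}{2} > 0$, using $q_N \le \alpha_N$ in the last step. \emph{Variance:} $\mathrm{Var}(S^{N,\textnormal{op}}_{t_N}) = t_N q_N(1-q_N) \le (m_N + \tfrac{N^2}{2})q_N$; using $m_N \le r\frac{\alpha_N}{1-q_N}\frac{N^2}{2} + 1$, $q_N \le 1$, the fact that $\sup_N \frac1{1-q_N} =: C_0 < \infty$ (finite for $N$ large in both regimes, since $q_N<1$ always and $q_N\to 0$ or $q_N\equiv q$), and $\alpha_N N^2 \ge N \ge 1$, one gets $\mathrm{Var}(S^{N,\textnormal{op}}_{t_N}) \le C_1 \alpha_N N^2$ for a constant $C_1 = C_1(r)$. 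Then Chebyshev yields
\[
	\IP\bigl(S^{N,\textnormal{op}}_{t_N} \ge m_N\bigr) \le \frac{\mathrm{Var}\bigl(S^{N,\textnormal{op}}_{t_N}\bigr)}{\bigl(m_N - \IE[S^{N,\textnormal{op}}_{t_N}]\bigr)^2} \le \frac{C_1 \alpha_N N^2}{\bigl((r-1)\alpha_N N^2/2\bigr)^2} = \frac{4C_1}{(r-1)^2\,\alpha_N N^2} \le \frac{4C_1}{(r-1)^2 N} \xrightarrow[N\to\infty]{} 0 ,
\]
where the last inequality again uses $\alpha_N \ge \frac1N$.

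\textbf{Main obstacle.} The one genuinely delicate choice is to analyse $S^{N,\textnormal{op}}$ at the deterministic time $t_N$ rather than working directly with $S^{N,\textnormal{del}}_{T_r^N}$. The latter is negative binomial with variance of order $m_N(1-q_N)/q_N^2$, which, if $q_N \to 0$ arbitrarily fast, grows faster than the square of its mean gap $(r-1)\frac{N^2}{2}$, so a naive Chebyshev estimate at the random time $T_r^N$ breaks down. Passing to the fixed time $t_N$ replaces the potentially enormous number of trials $T_r^N \approx m_N/q_N$ by the moderate $t_N = \Theta(m_N + N^2)$, keeping the variance at the manageable scale $\alpha_N N^2$; this is precisely where the definition $\alpha_N = \max\{q_N,\tfrac1N\}$ is used, guaranteeing $\alpha_N N^2 \to \infty$ uniformly across the two regimes. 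Beyond this, only the harmless rounding in $m_N$ and $t_N$ (the ``$+1$'' from the ceiling, parity of $N$) needs to be tracked, and it affects constants only.
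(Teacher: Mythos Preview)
Your proof is correct and takes a genuinely different route from the paper's. The paper works directly with $X^N := T_r^N - k_N = S^{N,\textnormal{del}}_{T_r^N}$, observes that $X^N \sim \mathrm{nB}(k_N,q_N)$ as a sum of $k_N$ i.i.d.\ geometrics, verifies the Lyapunov condition, and applies the Central Limit Theorem to show that the standardized level $(\tfrac{N^2}{2}-\mu_N)/\sigma_N \sim N(q_N-r\alpha_N)/\sqrt{2r\alpha_N}\to -\infty$. You instead exploit the duality $\{T_r^N \le t\}=\{S^{N,\textnormal{op}}_t \ge m_N\}$ to transfer everything to a \emph{deterministic}-time binomial $S^{N,\textnormal{op}}_{t_N}\sim\mathrm{Bin}(t_N,q_N)$ and finish with Chebyshev. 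Your argument is more elementary --- no CLT, no Lyapunov check --- and makes the role of the truncation $\alpha_N=\max\{q_N,\tfrac1N\}$ very transparent through the bound $\alpha_N N^2\ge N$ in the final step. One minor comment on your ``main obstacle'' paragraph: the assessment that a naive Chebyshev estimate on the negative binomial breaks down is not quite right. When $q_N<1/N$ one has $\alpha_N=1/N$, so $\mu_N \sim rN/(2q_N)\gg N^2/2$ and the true mean gap is of order $\mu_N$, not $(r-1)N^2/2$; a short computation shows $\sigma_N^2/(\mu_N-N^2/2)^2=O(1/N)$ in all regimes, so Chebyshev on $X^N$ would also succeed. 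This does not affect your argument, which stands independently.
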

\begin{proof}
	Fix $r>1$ and set $X^N:=T_{r}^N - k_N =S^{N,\textnormal{del}}_{T^N_r}$, where $k_N=\lceil r \tfrac{\alpha_N}{1-q_N} \tfrac{N^2}{2} \rceil$. We can interpret $X^N$ as a negative binomial random variable. This can be seen by considering a single opinion update as a ``success'' and a deletion as ``failure''.
	Then $X^N$ counts how many ``failures'' occurred before the first $k_N$ ``successes'' took place.
	Since negative binomials arise as the sum of i.i.d.\ geometric random variables, for large parameters and a proper rescaling they are well-approximated by a normal distribution, which follows by an application of the Central Limit Theorem.
	In this case 
	$X^N \sim \text{nB}\big(k_N, q_N\big)$ has mean $\mu_N = \frac{1-q_N}{q_N}k_N$ and standard deviation $\sigma_N = \frac{\sqrt{\mu_N}}{\sqrt{q_N}}$.
	
	We now verify the Lyapunov condition in order to apply the Central Limit Theorem (e.g.\ \cite[Lemma~15.41, Theorem~15.43]{kenke2020probability}):
	Denote by $Y^N_1,\ldots,Y^N_{k_N}$ i.i.d.\ and $\text{Geo}(q_N)$-distributed random variables.
	Further, let $\widetilde Y^N_i$ the corresponding standardized variables, i.e.\ 
	\begin{equation*}
		\widetilde Y^N_i
		= \frac{Y^N_i - \IE[Y^N_i]}{\sqrt{\mathds{V}[X^N]}}
		= k_N^{-\frac12}\cdot \frac{Y^N_i - \IE[Y^N_i]}{\sqrt{\mathds{V}[\smash{Y^N_i}]}}.
	\end{equation*}
	Then, recalling the kurtosis of the $\text{Geo}(p)$-distribution is given by $9+\frac{p^2}{\sqrt{1-p}}$, we obtain the Lyapunov condition for $\delta=2$ since
	\begin{equation*}
		\sum_{i=1}^{k_N}\IE\big[|\widetilde Y^N_i|^4\big]
		= k_N^{-1}\cdot(9+\tfrac{q_N^2}{\sqrt{1-q_N}})
		\xrightarrow[]{N\to\infty} 0.
	\end{equation*}
	Now, observe that
	\begin{equation*}
		\IP\big( X^N \geq \tfrac{N^2}{2} \big)= \IP \Big( \tfrac{X^N-\mu_N}{\sigma_N} \geq \tfrac{\frac{N^2}{2}-\mu_N}{\sigma_N}\Big),
	\end{equation*}
	where the right hand side in the last event equals
	\begin{equation*}
		k_N^{-\frac{1}{2}}\bigg(\frac{q_N}{\sqrt{1-q_N}}\frac{N^2}{2}-\frac{1-q_N}{\sqrt{1-q_N}}k_N\bigg)
		\sim N\cdot\frac{q_N-r\alpha_N}{\sqrt{2r\alpha_N}}
		\leq N\cdot (1-r)\sqrt{\alpha_N}
		\xrightarrow{N\to\infty} -\infty,
	\end{equation*}
	since $r>1$. Thus, the desired probability converges to $1$ as $N \to \infty$.
\end{proof}
\begin{proof}[Proof of Theorem~\ref{thm:split2}]
	Let us recall that we chose $c>c'$, $Z_0^{N, \textnormal{min}}>cN$ and $q_N\to 0$ as $N\to \infty$. Now it follows in the exact same way as in the proof of Theorem~\ref{thm:split} that
	\begin{align*}
		&\liminf_{N \to \infty} \IP \big( Z_\infty^{N, \textnormal{min}} > c'N \big)\\
		&\geq \liminf_{N \to \infty} \IP \left(Y_i+ Z_0^N \in ( c'N ,  (1-c')N ) \text{ for } i=1,\ldots,\Big\lceil r\tfrac{\alpha_N}{1-q_N}\tfrac{N^2}{2} \Big\rceil \right),
	\end{align*}
	where $(Y_i)_{i\geq 0}$ is a simple symmetric random walk starting at $0$. Note that, since $q_N,\alpha_N\to 0$, for every $r^*>0$ there exists an $N_{*}$ such that $\tfrac{r}{2} \tfrac{\alpha_N}{1-q_N}\leq r^*$ for all $N\geq N^*$. Thus, it follows for all $r^*>0$ that
	\begin{align*}
		&\liminf_{N \to \infty} \IP \big( Z_\infty^{N, \textnormal{min}} > c'N \big)\\
		&\geq \liminf_{N \to \infty} \IP \left(Y_i+ Z_0^N \in ( c'N ,  (1-c')N ) \text{ for } i=1,\ldots,\big\lceil r^*N^2 \big\rceil \right)\\
		&\geq\beta \Big( \frac{r^*}{4 (c-c')^2}\Big),
	\end{align*}
	where we used monotonicity of the probability measure $\IP$ in the first inequality. Analogously as in Theorem~\ref{thm:split}, the second inequality follows by an application of Lemma~\ref{lem:spitzer}. Finally, by definition of $\beta$ (see \eqref{eq:spitzer}) it follows that $\beta(x)\to 1$ as $x\to 0$. Thus, by letting $r^*\to 0$ we obtain the claim.
\end{proof}
\begin{proof}[Proof of Proposition~\ref{prop:almost_consensus}]
	The dynamics can only terminate in a state with final minority opinion $Z_\infty^{N, \textnormal{min}}> \varepsilon N$, if at least $\varepsilon (1-\varepsilon) N^2$ edges have been deleted. This is because at absorption there cannot be any edges between opposing opinions. That means that none of the $Z_\infty^{N, \textnormal{min}}$ vertices with minority opinion can be connected to any of the $N-Z_\infty^{N, \textnormal{min}}$ vertices with opposing opinion, so
	\begin{equation*}
		Z_\infty^{N, \textnormal{min}} \bigl(N-Z_\infty^{N, \textnormal{min}} \bigr) 
		\geq \varepsilon N (N- \varepsilon N)
	\end{equation*}
	edges must have been deleted.
	
	But then, by exchanging $q$ for $1-q$ and the roles of $S^{N,\textnormal{op}}$ and $S^{N,\textnormal{del}}$ in Lemma~\ref{lem:vot_del_moves}, with high probability $\lceil r \frac{q}{1-q} N^2 \varepsilon (1-\varepsilon) \rceil$ opinion updates must have happened for any $r \in (0,1)$.
	Lemma~\ref{lem:non_centered_hitting_times} shows that the probability that the random walk of the opinions hits $\{0,N\}$ earlier than that is asymptotically bounded from below by
	\begin{equation*}
		1 - \beta \left(r\frac{q}{1-q}\varepsilon (1-\varepsilon)\right) .
	\end{equation*}
	Letting $r \to 1$, one obtains the desired result.
\end{proof}
We finish this subsection with a sketch of the proof that segregation occurs in an offended voter model with multiple opinions with positive probability for $q$ small enough.
\begin{proof}[Proof sketch of Proposition~\ref{prop:more_opinions}]
	Note that
	\begin{align*}
		\IP \left(\{Z_\infty^{N, \textnormal{min}} > \delta N\}\right) =
		\IP &\Bigg( \bigcap_{k \in [K]} \{Z_\infty^{N,k} \in (\delta N, (1-\delta)N) \}\Bigg)\\
		\geq 1&- \sum_{k \in [K]} \IP \left( Z_\infty^{N,k} \notin (\delta N, (1-\delta)N) \right).
	\end{align*}
	Fix $r > 1$.
	By a similar argument as in Subsection~\ref{sec:ProofsSegregation}, the delayed random walk $Z_n^{N,k}$ w.h.p.\ gets to jump no more than $\big\lceil r \frac{q}{1-q} \frac{N^2}{2}\big\rceil$ times before absorption. To see this, note that no more than $\frac{N^2}{2}$ deletions can occur before absorption.
	Since the ratio of deletions and opinion updates concentrates, at most $\big\lceil r \frac{q}{1-q} \frac{N^2}{2}\big\rceil$ opinion updates happen during this time with high probability.
	For any $k$, the delayed random walk $Z_n^{N,k}$ can only jump when there is an opinion update.
	Hence, for any $k$, the total numbers of jumps in $Z_n^{N,k}$ is w.h.p.\ bounded from above by $\big\lceil r \frac{q}{1-q} \frac{N^2}{2}\big\rceil$.
	Therefore,
	\begin{align*}
		\limsup_{N \to \infty} \IP &\left( Z_\infty^{N,k} \notin (\delta N, (1-\delta)N )\right)\\
		\leq \lim_{N \to \infty} \IP &\left( Z_0^{N,k} + X_n \notin (\delta N, (1-\delta)N) \text{ for some } n \leq \big\lceil r \tfrac{q}{1-q} \tfrac{N^2}{2}\big\rceil\right)
	\end{align*}
	For $q>0$ small enough, this limit is smaller than $K^{-1}$ by Lemma~\ref{lem:spitzer}.
	Thus, there exists a $q_K^*>0$ such that for all $q \leq q_K^*$
	\begin{equation*}
		\liminf_{N \to \infty} \IP \left(\{Z_\infty^{N, \textnormal{min}} > \delta N\}\right) > 1 - K\cdot K^{-1} = 0. \qedhere
	\end{equation*}
\end{proof}

\subsection{Proof of connected consensus}\label{sec:ProofConsensus}

In this section, we prove Theorem~\ref{thm:consensus}. We start by giving an outline of the proof.

The proof uses the dynamical deletion graph defined in Section~\ref{sec:construction} to prove Theorem~\ref{thm:consensus} for the delayed OV-model, which implies the theorem to hold for the OV-model.
Recall that at all times the dynamical deletion graph is a subgraph of the delayed OV-Model.
The general idea for showing (dense) connectedness of the final graph is to study the connectedness of the dynamical deletion graph and argue that the delayed random walk performed by the opinions hits $0$ or $N$ while the graph is still (densely) connected.

Bounds for the time until which the dynamical deletion graph is still densely connected can be obtained via comparison with an \ER graph, see Lemmas~\ref{lem:er_degrees} and~\ref{lem:ConnectivityAndMinimalDegree}.
Furthermore, absorption times of a non-delayed random walk are - as can be seen in Lemma~\ref{lem:spitzer} - well studied.
The main difficulty lies in controlling the delay of the random walk.

The delay depends directly on the number of discordant edges left in the graph:
If this number decreases, the probability that a randomly sampled edge is a discordant edge decreases.
An opinion update is only performed if the sampled edge is a discordant edge.
Thus, less opinion updates are performed if the number of discordant edges gets small.
This is equivalent to an increased time delay in the opinion random walk.
The proportion of discordant edges, however, depends heavily on the underlying graph structure of the model and is therefore difficult to control.

In the setting $1-q_N \in O(N^{-\delta})$ of Theorem~\ref{thm:consensus} this can be achieved in the following way:
First, subdivide the domain of the random walk $\{0,\dots,N\}$ into finitely many subdomains or ``levels'', as visualised in Figure~\ref{fig:levels}. These levels were chosen in such a way that if level $k$ is hit then with high probability level $k+1$ is hit instead of returning to level $k-1$.
Next note that for the number of discordant edges it holds that
\begin{equation*}
	\hE_n^{N,d} \geq \hZ_n^{N, \textnormal{min}} \Bigl( \hD_n^{N, \textnormal{min}} - \hZ_n^{N, \textnormal{min}}\Bigr).
\end{equation*}
This is because each of the $\hZ_n^{N, \textnormal{min}}$ vertices that hold the minority opinion has at least $\hD_n^{N, \textnormal{min}}$ neighbours out of which at least $\bigl( \hD_n^{N, \textnormal{min}} - \hZ_n^{N, \textnormal{min}}\bigr)$ have the other opinion.
By definition of the levels we have control over $\hZ_n^{N, \textnormal{min}}$, and $\hD_n^{N, \textnormal{min}}$ can be bounded by using a comparison result with an \ER graph.
Therefore, for each level the number of discordant edges $\hE_n^{N,d}$ and with it the time delay of the random walk can be bounded from below.
Knowing the lower bound of the delay, one can bound the time it takes to reach level $k+1$ from level $k$ for all $k$.
Since only finitely many levels have to be crossed until absorption, the total time to absorption can then be asymptotically bounded from above.

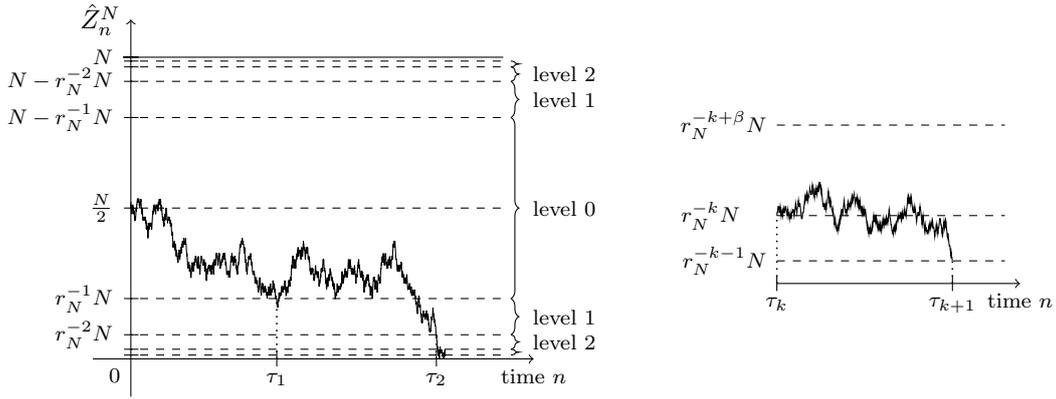
\begin{figure}[htb]
    \centering
    \begin{tikzpicture}[every node/.style={font=\scriptsize}]
    \draw[->] (-0.5,0) -- (5.3,0) node[anchor=north] {\scriptsize time $n$};
    \draw[->] (0,-0.5) -- (0,4.5) node[anchor=east] {\small{$\Hat{Z}_n^N$}};

        

    \draw (0.1,4) -- (-0.1,4) node[anchor=east] {$N$} -- ++(5,0);
    \draw (0.1,2) -- (-0.1,2) node[anchor=east] {$\frac{N}{2}$} [dashed] -- ++(5,0);;
    \draw (0.1,0.8) -- (-0.1,0.8) node[anchor=east] {$r_N^{-1} N$} [dashed] -- ++(5,0);;
    \draw (0.1,0.32) -- (-0.1,0.32) node[anchor=east] {$r_N^{-2} N$} [dashed] -- ++(5,0);;
    \draw (0.1,0.128) -- (-0.1,0.128) node[anchor=east] {} [dashed] -- ++(5,0);;
    \draw (0.1,0.0512) -- (-0.1,0.0512) node[anchor=east] {} [dashed] -- ++(5,0);;

    \draw (0.1,4-0.8) -- (-0.1,4-0.8) node[anchor=east] {$N-r_N^{-1} N$} [dashed] -- ++(5,0);;
    \draw (0.1,4-0.32) -- (-0.1,4-0.32) node[anchor=east] {$N-r_N^{-2} N$} [dashed] -- ++(5,0);;
    \draw (0.1,4-0.128) -- (-0.1,4-0.128) node[anchor=east] {} [dashed] -- ++(5,0);;
    \draw (0.1,4-0.0512) -- (-0.1,4-0.0512) node[anchor=east] {} [dashed] -- ++(5,0);;



    \draw[dotted, line width=0.6pt] (1.92,0) -- ++(0,0.8);
    \draw (1.92,-0.1) -- ++(0,0.2) node[below, yshift=-0.15cm] {$\tau_1$};

    \draw[dotted, line width=0.6pt] (4.02,0) -- ++(0,0.32);
    \draw (4.02,-0.1) --++ (0,0.2) node[below, yshift=-0.15cm] {$\tau_2$};

    \node[anchor=north east] at (0,0) {$0$};

    \draw [decorate, decoration={brace, amplitude=3pt}] (5,4-0.8) -- (5,0.8) 
        node [midway, xshift=20pt] {level $0$};
    \draw [decorate, decoration={brace, amplitude=3pt, mirror}] (5,4-0.8) -- (5,4-0.32) 
        node [midway, xshift=20pt] {level $1$};
    \draw [decorate, decoration={brace, amplitude=3pt, mirror}] (5,4-0.32) -- (5,4-0.128) 
        node [midway, xshift=20pt] {level $2$};

    \draw [decorate, decoration={brace, amplitude=3pt, mirror}] (5,4-0.128) -- (5,4-0.0512) 
        node [midway, xshift=20pt] {};

     \draw [decorate, decoration={brace, amplitude=3pt}] (5,0.8) -- (5,0.32) 
         node [midway, xshift=20pt] {level $1$};
     \draw [decorate, decoration={brace, amplitude=3pt}] (5,0.32) -- (5,0.128) 
         node [midway, xshift=20pt] {level $2$};
     \draw [decorate, decoration={brace, amplitude=3pt}] (5,0.128) -- (5,0.0512) 
         node [midway, xshift=20pt] {};


    \draw[black, xshift=-0.0432cm, yshift=-0.2cm] plot file {plots/raw_data/plot_rw1b};

    \begin{scope}[xshift=6.5cm]
    \draw[dashed] (2,3.1) node[anchor=east] {\scriptsize{$r_N^{-k+\beta}N$}} -- ++(3,0) ;
    \draw[dashed] (2,1.9) node[anchor=east] {\scriptsize{$r_N^{-k}N^{\phantom{-1}}$}} -- ++(3,0);
    \draw[dashed] (2,1.3) node[anchor=east] {\scriptsize{$r_N^{-k-1}N$}} -- ++(3,0);
    \draw[black, xshift=2cm] plot file {plots/raw_data/plot_rw2};
    \draw[->] (2,1) -- ++(3.2,0) node[anchor=north] {\scriptsize time $n$};
    
    \draw[dotted, line width=0.6pt] (4.31,1) -- ++(0,0.3);
    \draw (4.31,0.9) --++ (0,0.2) node[below, yshift=-0.15cm] {$\tau_{k+1}$};
    
    \draw[dotted, line width=0.6pt] (2,1) -- ++(0,0.9);
    \draw (2,0.9) -- ++(0,0.2) node[below, yshift=-0.15cm] {$\tau_k$};

    \end{scope}

\end{tikzpicture}
    \caption{Left: Subdivision of the domain of the random walk into finitely many ``levels''. Right: Starting from level $k$, the level $k+1$ is hit w.h.p.\ before returning to $r_N^{-k+\eta}N$, $\eta\in(0,1)$, (or $N- r_N^{-k+\eta}N$) and hence before returning to level $k-1$.
    }\label{fig:levels}
\end{figure}

\medskip

We say that the random walk is on the $k$-th level if $r_N^{-(k+1)} N\leq \hZ_n^{N,\textnormal{min}} \leq r_N^{-k} N$ and we denote the first hitting time of the $k$-th level by
\begin{equation*}
	\tau_k \defeq \min \{n \geq 0 : \hZ_n^{N,\textnormal{min}} \leq r_N^{-k} N\},
\end{equation*}
where $r_N = (1-q_N)^{-\frac{1}{2}}$. Note that for our specific choice of $q_N$ and $r_N$, the levels are chosen such that there exists a $k_0  \in \N$  such that $r_N^{-k} N \to 0$ for all $k > k_0$ and $\liminf_{N \to \infty} r_N^{-k} N > 0$ for all $k \leq k_0$.
Also note that level $0$ has linear size in $N$ while all higher levels are sublinear.

First we need some control over the minimal degree of a dense \ER graph. This is provided by the following result in the form of an asymptotic lower bound. 
\begin{lemma}[\ER Degrees]\label{lem:er_degrees}
	Let $ \bigl( \Tilde{G}^N \bigr)_{N \in \N_0}= \bigl(\Lambda^N, \Tilde{\cE}^N\bigr)_{N \in \N_0}$ be a sequence of \ER graphs with edge probabilities $p \in (0,1)$. Denote by
	\begin{equation*}
		\cN_x^N = \big\{ y \in \Lambda^N : \{x, y\} \in \Tilde{\cE}^N\big\}
	\end{equation*}
	the neighborhood of vertex $x \in \Lambda^N$ in $\Tilde{G}^N$. Then, for any $\varepsilon > 0$, it holds that
	\begin{equation*}
		\IP\Big(\min_{x \in \Lambda^N} \big\lvert \cN_x^N \big\lvert \geq (p-\varepsilon)N \Big)\to 1 \quad \text{ as } \quad N\to \infty.
	\end{equation*}
\end{lemma}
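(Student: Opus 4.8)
The plan is to reduce the statement to a one-vertex concentration estimate combined with a union bound, which is the standard route for minimum-degree bounds in dense random graphs. Fix a vertex $x \in \Lambda^N$. Since each potential edge is present independently with probability $p$, the degree $\lvert \cN_x^N \rvert$ is a sum of $N-1$ i.i.d.\ $\text{Ber}(p)$ random variables and hence binomially distributed with mean $(N-1)p$.

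First I would invoke a Chernoff-type lower-tail bound: for every $\varepsilon > 0$ there is a constant $c = c(p,\varepsilon) > 0$ such that $\IP\bigl( \lvert \cN_x^N \rvert < (p-\varepsilon)N \bigr) \leq e^{-cN}$ for all sufficiently large $N$. This holds because $(p-\varepsilon)N \leq (N-1)p - \tfrac{\varepsilon}{2}N$ once $N$ is large, so the event in question is a linear-in-$N$ downward deviation of a binomial from its mean, and e.g.\ the bound $\IP\bigl( \text{Bin}(m,p) \leq (1-t)mp \bigr) \leq \exp(-t^2 m p/2)$ applied with $m = N-1$ and a suitable fixed $t > 0$ gives the claim. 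Crucially, the rate $c$ does not depend on $x$, since all one-vertex degrees have the same distribution.

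A union bound over the $N$ vertices then finishes the argument:
\[
	\IP\Bigl( \min_{x \in \Lambda^N} \lvert \cN_x^N \rvert < (p-\varepsilon)N \Bigr) \leq \sum_{x \in \Lambda^N} \IP\bigl( \lvert \cN_x^N \rvert < (p-\varepsilon)N \bigr) \leq N e^{-cN} \longrightarrow 0
\]
as $N \to \infty$, which is precisely the assertion. I do not expect any genuine obstacle here: the degrees of distinct vertices are dependent, but since the argument only uses subadditivity of $\IP$ this is irrelevant, and each individual step is a textbook estimate. The only point worth keeping explicit is the uniformity of the exponential rate $c$ in the vertex $x$, which is immediate from the identical distribution of the degrees.
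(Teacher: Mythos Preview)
Your proof is correct and is in fact the textbook route. The paper takes a slightly different path: after the same single-vertex Chernoff/large-deviation estimate, instead of bounding the complement via a union bound it observes that the events $\{|\cN_x^N|\geq (p-\varepsilon)N\}$ are increasing in the edge configuration, applies the FKG inequality to lower-bound the probability of the intersection by the product of the marginals, and then uses Bernoulli's inequality $(1-a)^N\geq 1-Na$ to reach the same $1-Ne^{-cN}$ conclusion. The FKG detour is unnecessary here---your union bound on the complements yields the identical final estimate in one line and avoids invoking positive association altogether---so your argument is the more elementary of the two; the paper's version would only buy something extra if one cared about a multiplicative rather than additive error, which is not the case for this lemma.
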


\begin{proof}
	Note that for any vertex $x \in \Lambda^N$, its number of neighbours follows a Binomial distribution, that is $|\cN_x^N| \sim \text{Bin} (N-1, p)$.
	Furthermore, by the large deviations result given in Theorem 2.19 of \cite{van2016random}, it holds that
	\begin{equation}\label{eq:intersect}
		\IP \big(|\cN^N_x|\geq (p-\varepsilon)N\big)\geq 1-e^{-(N -1) I(\varepsilon)} - \rho_N
	\end{equation}
	for any $x \in \Lambda^N$, where $I(\cdot)$ is the rate function of a Binomial random variable and
	\begin{equation*}
		\rho_N \defeq \IP \big((p-\varepsilon)(N-1) \leq |\cN^N_x| \leq (p-\varepsilon)N \big).
	\end{equation*}
	Now we would like to consider the event that each vertex has more than $(p-\varepsilon)N$ neighbours, i.e. 
	\begin{equation*}
		\bigcap_{x\in \Lambda^N} \Big\{|\cN_x^N|\geq (p-\varepsilon)N \Big\}.
	\end{equation*}
	The events $\{|\cN_x^N|\geq (p-\varepsilon)N\}, x \in \Lambda^N$ are not independent so their probability is not given by the product of Binomial probabilities.
	
	However, in the vocabulary of percolation (see \cite{Grimmett1999}, p.\ 32), the events $\{|\cN_x^N|\geq (p-\varepsilon)N\}$ are increasing. To see this, note that $\IP^N$, i.e.\ the distribution of the \ER graph on $N$ vertices, can be written as the product measure $\IP^N = \bigotimes_{e \in E^N} \pi_p$ where $\pi_p \sim \text{Ber}(p)$. Then, $E'' \subset E' \subset E^N$ implies $|\cN^N_x (E'')| \leq |\cN^N_x (E')|$ where $|\cN^N_x (E')|$ is the number of neighbours of $x$ in the configuration $E'$. For increasing events the FKG-inequality holds (see \cite{Grimmett1999}, p.\ 34), yielding
	\begin{equation*}
		\IP^N\bigg(\bigcap_{x\in \Lambda^N}\{|\cN^N_x|\geq (p-\varepsilon)N\}\bigg) \geq \prod_{x \in \Lambda^N} \IP^N(|\cN^N_x|\geq (p-\varepsilon)N) \geq \left( 1-e^{- (N-1) I(\varepsilon)} - \rho_N\right)^N.
	\end{equation*}
	By Theorem 2.21 in \cite{van2016random}, the rate function is given by $I(\varepsilon) = \frac{\varepsilon^2}{2p}$.
	The De-Moivre-Laplace limit theorem implies that $\rho_N \leq c_1 N^{-\frac{1}{2}}\exp{(-c_2 N)}$ for appropriate $c_1, c_2 > 0$.
	Together with the Bernoulli inequality, we get that the above term is greater than or equal to
	$1-N\exp(-\tfrac{(N-1)\varepsilon^2}{2p}) - N \rho_N$, which tends to $1$ as $N\to\infty$.
\end{proof}
In the next result we couple the dynamical deletion graph with an \ER graph. This allows us to obtain a sufficient criterion on the number of time steps $n$ to guarantee connectedness of the induced graph. In combination with Lemma~\ref{lem:er_degrees} we can obtain lower bounds on the minimal degree.
\begin{lemma}[Connectivity of $\cE^N$]\label{lem:ConnectivityAndMinimalDegree}
	Let $\cE^N_0 = E^N$ and $q_N < 1$ for all $N \in \N$. Let $\varepsilon>0$. The dynamical deletion graphs $(\Lambda^N,\cE_{n}^N )_{n\in\N_0}$ are connected with high probability for all 
	\begin{equation*}
		n\leq n_1(N,\varepsilon) \defeq \binom{N}{2}\frac{\log(N)-\log((1+\varepsilon)\log(N))}{2(1-q_N)}.
	\end{equation*}
	That is,
	\begin{equation*}
		\lim_{N \to \infty} \IP \Bigl( \bigl(\Lambda^N,\cE^N_n \bigr) \text{ is connected for all } n \leq n_1 (N, \varepsilon) \Bigr) = 1
	\end{equation*}
	Furthermore, for $\kappa\in (0,1)$ let $\varepsilon=\varepsilon(\kappa)>0$ be chosen such that $\kappa+\varepsilon<1$ and set
	\begin{equation*}
		n_2(N,\kappa,\varepsilon)\defeq \binom N2\frac{\log(\frac1{\kappa+\varepsilon})}{2(1-q_N)}.
	\end{equation*}
	Then it follows that
	\begin{align*}
		\lim_{N \to \infty} \IP\Big(\min_{x\in \Lambda^N}|\{y\in \Lambda^N: \{x,y\}\in \cE_{n}\}|\geq \kappa N \text{ for all } n \leq n_2 (N, \kappa, \varepsilon)\Big) = 1.
	\end{align*}
	In particular, the graph $(\Lambda^N,\cE^N_n)$ is connected and the minimal degree is bounded from below by $\kappa N$ for all $n\leq n_2(N,\kappa,\varepsilon)$ with high probability as $N\to \infty$.
\end{lemma}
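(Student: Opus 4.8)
The plan is to control the dynamical deletion graph $(\Lambda^N,\cE^N_n)$ by comparison with an \ER graph whose edge probability equals the edge-survival probability of the deletion dynamics, and then to invoke the classical \ER connectivity threshold together with Lemma~\ref{lem:er_degrees}. From the construction in Section~\ref{sec:construction}, an edge $e\in E^N$ is still present in $\cE^N_n$ precisely if $e$ was never the sampled pair in a step with $\hU^N_i=0$; since the sampled pair is uniform on $E^N$ and $\{\hU^N_i=0\}$ has probability $1-q_N$ independently of it, we have
\begin{equation*}
	\IP\bigl(e\in\cE^N_n\bigr)=\Bigl(1-\tfrac{1-q_N}{\binom{N}{2}}\Bigr)^n=:p_n .
\end{equation*}
Since $(1-q_N)/\binom{N}{2}=o(N^{-2})$, one checks that $p_{n_1(N,\varepsilon)}=\bigl(\tfrac{(1+\varepsilon)\log N}{N}\bigr)^{1/2+o(1)}$ and $p_{n_2(N,\kappa,\varepsilon)}=(\kappa+\varepsilon)^{1/2+o(1)}$. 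As $\cE^N_n$ is decreasing in $n$ (edges are only removed), it suffices to prove connectedness at $n=n_1(N,\varepsilon)$ and the minimal-degree bound at $n=n_2(N,\kappa,\varepsilon)$; the statements for smaller $n$ then follow by monotonicity.

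The key step is to make the \ER comparison precise, which I would do by Poissonization. If the deterministic number of steps $n$ is replaced by an independent $\text{Poi}(n)$, the steps with $\hU=0$ form a Poisson number, and splitting this Poisson stream according to the uniform sampled pair shows that the numbers of times the distinct edges get targeted are \emph{independent} Poisson variables with parameter $\tfrac{(1-q_N)n}{\binom{N}{2}}$. Hence in the Poissonized dynamics the edges survive independently, so $\cE^N_{\text{Poi}(n)}$ has exactly the law of $G\bigl(N,\exp(-\tfrac{(1-q_N)n}{\binom{N}{2}})\bigr)$. Since $\text{Poi}((1+\eta)n)\geq n$ with high probability (both $n_1$ and $n_2$ are of order $N^2\to\infty$) and $\cE^N_m$ is monotone in $m$, this produces, up to an event of vanishing probability, an \ER subgraph of $\cE^N_n$ with parameter $\exp(-\tfrac{(1+\eta)(1-q_N)n}{\binom{N}{2}})$ for an arbitrarily small $\eta>0$. (Alternatively, one may avoid Poissonization by observing that the deletion indicators $(\1\{e\notin\cE^N_n\})_{e\in E^N}$ are negatively associated, being nondecreasing functions of the occupancy counts of a multinomial-type allocation; this already yields the \ER upper bounds used below.)

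For connectedness, taking $\eta$ small, the \ER subgraph just produced at $n=n_1(N,\varepsilon)$ has parameter $p_N'=\bigl(\tfrac{(1+\varepsilon)\log N}{N}\bigr)^{(1+\eta)/2+o(1)}$ with $p_N'N\gg\log N$, so it is connected with high probability: the standard first-moment bound $\sum_{k=1}^{\lfloor N/2\rfloor}\binom{N}{k}(1-p_N')^{k(N-k)}\to0$ applies, since the $k=1$ term is at most $N\exp(-p_N'(N-1))\to0$ and the terms with larger $k$ are summably smaller (use $\binom{N}{k}\leq N^k$ and $k(N-k)\geq kN/2$). Hence $\cE^N_n$ is connected for all $n\leq n_1(N,\varepsilon)$ with high probability. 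For the minimal-degree bound, the \ER subgraph produced at $n=n_2(N,\kappa,\varepsilon)$ has parameter $p_N''=(\kappa+\varepsilon)^{(1+\eta)/2+o(1)}$, and since $\sqrt{\kappa+\varepsilon}>\kappa$ for $\kappa+\varepsilon\in(0,1)$ one may choose $\eta$ (and the $\varepsilon$ in Lemma~\ref{lem:er_degrees}) small enough that eventually $p_N''>\kappa$; then Lemma~\ref{lem:er_degrees} shows this \ER subgraph, and a fortiori $\cE^N_{n_2(N,\kappa,\varepsilon)}$, has minimal degree at least $\kappa N$ with high probability, and the claim for all $n\leq n_2(N,\kappa,\varepsilon)$ follows by monotonicity.

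The main obstacle is the \ER comparison itself. The graph $\cE^N_n$ is not an \ER graph: its edge-survival events are negatively correlated (fewer distinct edges are deleted than if deletions acted independently), so there is no naive stochastic domination, and the crude union bound is of no use because by time $n_1$ far more than $\binom{N}{2}$ deletion attempts have already occurred. Capturing the correct dependence --- via Poissonization with a careful de-Poissonization, or through the negative-association machinery --- is the one genuinely non-routine point; once it is in place, everything reduces to textbook \ER estimates and Lemma~\ref{lem:er_degrees}. A minor additional check is that $n(1-q_N)\asymp\binom{N}{2}\to\infty$ at both endpoints, which is exactly what legitimizes the de-Poissonization (and the Chernoff control on the number of deletion steps).
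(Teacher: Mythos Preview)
Your proposal is correct and follows essentially the same approach as the paper: the paper embeds the discrete dynamics in continuous time via i.i.d.\ $\text{Exp}(|E^N|)$ waiting times, observes that the resulting continuous-time deletion graph is exactly an \ER graph with edge probability $e^{-(1-q_N)t}$, applies the \ER connectivity threshold and Lemma~\ref{lem:er_degrees}, and then translates back to discrete time via a Poisson Chernoff bound---precisely your Poissonization/de-Poissonization scheme in different clothing. The only cosmetic difference is that the paper builds in a fixed factor-of-two slack (writing $n_1=\tfrac12|E^N|t_1$) rather than your $(1+\eta)$, so its \ER parameter at the relevant time comes out as $\tfrac{(1+\varepsilon)\log N}{N}$ rather than your $\sqrt{\tfrac{(1+\varepsilon)\log N}{N}}$, but both are comfortably above the connectivity threshold and the conclusions are identical.
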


\begin{proof}
	Let $(T_n^N)_{n \in \N_0}$ be i.i.d.\ $\text{Exp}(|E^N|)$ random variables and define $\Tilde{\cE}_t^N \defeq \cE^N_n$ for $t\in [t_n^N, t_{n+1}^N)$ where $t_k^N = \sum_{i=1}^{k}T_i^N$ and $t_0^N=0$. Now $\Tilde{\cE}_t^N$ is an \ER graph for all $t \geq 0$ with edge probability $p^N_t=\exp\big(-(1-q_N)t\big)$.
	To see this, note that instead of having a single $\text{Exp}(|E^N|)$ clock and selecting a random pair of vertices whenever the clock rings, one can equip each pair of vertices with an $\text{Exp}(1)$ clock.
	Whenever a clock rings, the corresponding edge is deleted independently with probability $1-q_N$.
	So, effectively, every edge has an independent rate $1-q_N$ deletion clock.
	By the properties of the exponential distribution, the probability that a given edge is still present at time $t$ is $\exp\big(-(1-q_N)t\big)$, independently of all other edges.
	
	Note that for all
	\begin{equation*}
		t\leq t_1=t_1^{N,\varepsilon}:=\frac{\log N-\log((1+\varepsilon)\log(N))}{1-q_N}
	\end{equation*}
	it holds that $p^N_t\geq \frac{(1+\varepsilon)\log(N)}{N}$ and for all
	\begin{equation*}
		t\leq t_2=t_2^{N, \varepsilon,\kappa}:=\frac{\log(\frac1{\kappa+\varepsilon})}{1-q_N},
	\end{equation*}
	we get that $p_t^N \geq \kappa+\varepsilon$.
	
	It is well known that a sequence of \ER graphs with $N \to \infty$ vertices is connected with high probability if the edge probability is greater than or equal to
	$\frac{(1+\varepsilon)\log(N)}{N}$, see \cite[Theorem~5.8]{van2016random}.
	Therefore, $\Tilde{\cE}_t^N$ is connected w.h.p.\ until time $t_1$.
	By Lemma \ref{lem:er_degrees} we get that $\Tilde{\cE}_t^N$ w.h.p.\ has minimal degree greater $\kappa N$ until time $t_2$.
	
	It remains to translate the conditions ``$t\leq t_1$'' and ``$t \leq t_2$'' back to discrete time.
	Specifically, we show that $t^N_{n_1} \leq t_1$ and $t^N_{n_2} \leq t_2$ hold w.h.p.:
	Denote by $(P_t)_{t \geq 0}$ a Poisson process on $\IR^{+}$ with rate $|E^N|$. Then,
	\begin{equation*}
		\IP \Bigl(t_{n_1}^N \leq t_1 \Bigr)
		= \IP \left( \sum_{i=1}^{n_1} T_i^N \leq t_1\right)
		= \IP(P_{t_1} \geq n_1).
	\end{equation*}
	Next we see that $n_1=n_1(N,\varepsilon)=\frac{1}{2}|E_N|t_1$, and thus it follows that
	\begin{align*}
		\IP(P_{t_1} < n_1)
		&= \IP(P_{t_1} < \tfrac12|E^N|t_1)
		\leq e^{-\frac{1}{8} {|E^N|t_1}}
		\to 0,
	\end{align*}
	where we used the Chernoff bound for Poisson random variables given in \cite{Alon2008}, Theorem~A.1.15. An analogous calculation holds for $t_2$. Lastly, note that $n\leq n_2(N,\kappa,\varepsilon)$ implies $n\leq n_1(N,\varepsilon)$ for $N$ large enough since $n_1$ is of higher order.
\end{proof}

Lemma~\ref{lem:ConnectivityAndMinimalDegree} provides us with some control over connectedness and size of the minimal degree of the dynamical deletion graph.
The next step is to get control over the delay of the random walk. This is done by the following lemma.

\begin{lemma}\label{lem:iidcoupling}
	Let $f: \N \mapsto \IR^+$ be such that $f(N) \leq \frac{N(N-1)}{2}$. Then, for any $N \in \N$ there exists a sequence of i.i.d.\ Bernoulli random variables $(Y_n^N)_{n \in \N} = (Y^{N, f}_n)_{n \in \N}$ such that $\IP \bigl( Y^N_n = 1\bigr) = \frac{2 q_N f(N)}{N(N-1)}$ and
	\begin{equation*}
		X_n^{N, \textnormal{op}} \geq Y_n^N \quad\text{if}\quad \widehat{E}_{n-1}^{N,d} \geq f(N) .
	\end{equation*}
\end{lemma}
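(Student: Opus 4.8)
The plan is to construct $(Y_n^N)_{n\in\N}$ directly on the probability space carrying the delayed OV-Model, by ``thinning'' the opinion-update indicators $X_n^{N,\textnormal{op}}$, but carrying out the thinning at the level of the sampled pair of vertices rather than at the level of $X_n^{N,\textnormal{op}}$ itself.

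Recall from the construction in Section~\ref{sec:construction} and Remark~\ref{rem:delayed_ov_model} that the delayed OV-Model is driven by the i.i.d.\ sequences $\hU^N$, $\hV^N$ together with an i.i.d.\ sequence $(W_n)_{n\in\N}$ of uniformly chosen pairs $W_n\in E^N$, that $X_n^{N,\textnormal{op}}=1$ holds precisely when $\hU_n^N=1$ and $W_n\in\bfhE_{n-1}^{N,d}$, and hence that $\IP\bigl(X_n^{N,\textnormal{op}}=1\mid\bfhZ_{n-1}^N\bigr)=\tfrac{2q_N\hE_{n-1}^{N,d}}{N(N-1)}$. I would enlarge this space by an independent i.i.d.\ sequence $(\xi_n)_{n\in\N}$ of $\text{Unif}([0,1])$ variables, write $m\defeq\lfloor f(N)\rfloor$ and $\theta\defeq f(N)-m\in[0,1)$ (so $m\leq\binom N2$ by assumption), and, for each $n$, select by a deterministic rule depending only on $\bfhE_{n-1}^{N,d}$ a set $D_{n-1}\subseteq E^N$ of exactly $m$ edges which is contained in $\bfhE_{n-1}^{N,d}$ whenever $\hE_{n-1}^{N,d}\geq m$ (padded arbitrarily from $E^N$ otherwise), together with one further edge $e_{n-1}^\ast\in E^N\setminus D_{n-1}$ which lies in $\bfhE_{n-1}^{N,d}$ whenever $\hE_{n-1}^{N,d}\geq m+1$ (the choice of $e_{n-1}^\ast$ being irrelevant when $\theta=0$). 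Then I would set
\[
 Y_n^N\defeq\hU_n^N\Bigl(\1\{W_n\in D_{n-1}\}+\1\{W_n=e_{n-1}^\ast\}\1\{\xi_n\leq\theta\}\Bigr).
\]

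To verify the distribution, condition on $\cG_{n-1}\defeq\sigma\bigl(\hU_i^N,\hV_i^N,W_i,\xi_i:i<n\bigr)$: the objects $D_{n-1}$ and $e_{n-1}^\ast$ are $\cG_{n-1}$-measurable while $\hU_n^N$, $W_n$ and $\xi_n$ are independent of $\cG_{n-1}$, so since $|D_{n-1}|=m$, $e_{n-1}^\ast\notin D_{n-1}$ and $\IP(W_n=e)=\binom N2^{-1}$ for each $e\in E^N$, one gets $\IP(Y_n^N=1\mid\cG_{n-1})=q_N\binom N2^{-1}(m+\theta)=\tfrac{2q_Nf(N)}{N(N-1)}=:p$, a deterministic constant; in particular $Y_n^N$ is a genuine $\{0,1\}$-variable. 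Being $(\cG_n)_n$-adapted with constant conditional success probability $p$, the sequence $(Y_n^N)_n$ has, by the tower property, joint law equal to a product of $\text{Ber}(p)$ marginals, hence is i.i.d.\ $\text{Ber}(p)$. For the domination bound, on the event $\{\hE_{n-1}^{N,d}\geq f(N)\}$ one has $\hE_{n-1}^{N,d}\geq m$, and $\hE_{n-1}^{N,d}\geq m+1$ if $\theta>0$, so $D_{n-1}$ and (when relevant) $e_{n-1}^\ast$ are subsets of $\bfhE_{n-1}^{N,d}$; therefore $Y_n^N=1$ forces $\hU_n^N=1$ and $W_n\in\bfhE_{n-1}^{N,d}$, which is exactly the event $\{X_n^{N,\textnormal{op}}=1\}$, and thus $Y_n^N\leq X_n^{N,\textnormal{op}}$.

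The one genuinely delicate point---the step I expect to be the main obstacle---is the \emph{independence} of the $Y_n^N$. The naive thinning $Y_n^N=X_n^{N,\textnormal{op}}\,\1\{\xi_n\leq p/\IP(X_n^{N,\textnormal{op}}=1\mid\bfhZ_{n-1}^N)\}$ has the right marginals and the right domination but fails to be i.i.d., because the thinning ratio is a function of the random state $\bfhZ_{n-1}^N$. Selecting a fixed number $m$ of discordant edges, plus a fractional weight $\theta$, and letting the state-blind variable $W_n$ decide membership is precisely what removes this dependence; once this is set up, the remaining verifications are routine. The hypothesis $f(N)\leq\binom N2$ enters only to ensure that a subset $D_{n-1}$ of size $m$ exists.
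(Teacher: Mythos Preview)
Your construction is correct and in fact a bit cleaner than the paper's. Both arguments rest on the same principle---build $Y_n^N$ so that it is $\cG_n$-adapted with deterministic conditional success probability $\IP(Y_n^N=1\mid\cG_{n-1})=p$, whence the sequence is automatically i.i.d.\ $\text{Ber}(p)$---but they realise this differently. The paper works at the level of the indicators: it thins $X_n^{N,\textnormal{op}}$ by an auxiliary factor $Y'_n=\1\{W'_n\leq f(N)/\hE_{n-1}^{N,d}\}$ on $\{\hE_{n-1}^{N,d}\geq f(N)\}$, and on the complementary event it ``tops up'' using $X_n^{N,\textnormal{e}}$ and a second auxiliary variable $Y''_n$, checking in each case that the conditional probability comes out to $p$. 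Your approach instead exploits the uniformity of the sampled pair $W_n$: by fixing a subset $D_{n-1}\subseteq E^N$ of size exactly $m$ (plus a fractional edge), the event $\{W_n\in D_{n-1}\}$ has conditional probability $m/\binom{N}{2}$ regardless of \emph{which} edges are in $D_{n-1}$, so no case split is needed and the computation is immediate. What you gain is transparency; what the paper's route gains is that it never needs an explicit enumeration rule for $D_{n-1}$.

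One small remark on your discussion of the ``naive thinning'': the real obstruction there is not that the ratio $f(N)/\hE_{n-1}^{N,d}$ is state-dependent---a state-dependent ratio can still yield a constant conditional probability, and indeed does so when $\hE_{n-1}^{N,d}\geq f(N)$---but that the ratio can exceed $1$ on $\{\hE_{n-1}^{N,d}<f(N)\}$, where thinning alone cannot reach probability $p$. This is precisely why the paper needs the second branch of its construction, and why your fixed-size-subset idea sidesteps the issue entirely.
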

\begin{proof}
	Recall that $\hE_n^{N,d} = \lvert \bfhE_n^{N,d} \rvert$ denotes the number of discordant edges in the delayed model. Let $(W'_n)_{n\in \N}$ and $(W''_n)_{n\in \N}$ be two independent sequences of uniformly distributed random variables with values on $[0,1]$ such that they are independent of the sequences $(\hU^N_n)_{n\in\N}$, $(\hV^N_n)_{n\in\N}$ and the uniformly sampled edges for all time steps $n\in \N$. In other words, these two sequences are independent of all randomness used to construct the delayed OV-Model.
	Now we set
	\begin{equation*}
		Y'_n:=\1_{\big\{W_n'\leq \frac{f(N)}{ \hE_{n-1}^{N,d}}\big\}}\quad \text{ and } \quad Y''_n:=\1_{\Big\{W_n''\leq \frac{2(f(N)-\hE_{n-1}^{N,d})}{N(N-1)-2 \hE_{n-1}^{N,d}}\Big\}}
	\end{equation*}
	for all $n\in\N$. As a consequence of this definition $(Y'_n)_{n\in \N}$ and $(Y''_n)_{n\in \N}$ are two sequences of random variables with values in $\{0,1\}$ such that, for any $n\in\N$,
	$Y'_n$ and $Y''_n$ are conditionally on $\bfhZ_{n-1}^{N,d}$ independent of $(\bfhZ_k^{N,d})_{k\leq n-2}$, $X_n^{N, \textnormal{op}}$, $X_n^{N,\textnormal{e}}$ and of each other. Their conditional distributions are given by
	\begin{equation*}
		\IP(Y'_n=1 | \bfhZ_{n-1}^{N})=\frac{f(N)}{ \hE_{n-1}^{N,d}}\wedge 1
		\quad\text{and}\quad
		\IP(Y''_n=1 | \bfhZ_{n-1}^{N})=\frac{2(f(N)-\hE_{n-1}^{N,d})}{N(N-1)-2 \hE_{n-1}^{N,d}}\vee 0.
	\end{equation*}
	Now define the sequence
	\begin{align*}
		Y_n:=X_n^{N, \textnormal{op}}Y'_n\1_{\{ \hE_{n-1}^{N,d}\geq f(N)\}}+ \hU_n^N(1-(1-X_n^{N,\textnormal{e}})(1-Y''_n))\1_{\{ \hE_{n-1}^{N,d}< f(N)\}}.
	\end{align*}
	By definition and Remark~\ref{rem:delayed_ov_model} it follows that
	\begin{align*}
		\IP(&Y_n=1|\bfhZ_{n-1}^{N})\\
		&=  \IP(X_n^{N, \textnormal{op}}=1, Y'_n=1|\bfhZ_{n-1}^{N})\1_{\{ \hE_{n-1}^{N,d}\geq f(N)\}}\\
		&\quad +   \IP(\hU_n^N=1)(1-\IP(X_n^{N, \textnormal{op}}=0,Y''_n=0|\bfhZ_{n-1}^{N})\1_{\{ \hE_{n-1}^{N,d}< f(N)\}}\\
		&= \frac{2 q_N \hE_{n-1}^{N,d}}{N(N-1)}\frac{f(N)}{ \hE_{n-1}^{N,d}}\1_{\{ \hE_{n-1}^{N,d}\geq f(N)\}}\\
		&\quad +q_N\bigg(1-\Big(1-\frac{2 \hE_{n-1}^{N,d}}{N(N-1)}\Big)\frac{N(N-1)-2f(N)}{N(N-1)-2 \hE_{n-1}^{N,d}}\bigg) \1_{\{\hE_{n-1}^{N,d}< f(N)\}}\\
		&= \frac{2q_Nf(N)}{N(N-1)}\1_{\{ \hE_{n-1}^{N,d}\geq f(N)\}}+\frac{2q_Nf(N)}{N(N-1)}\1_{\{ \hE_{n-1}^{N,d}< f(N)\}}
		=\frac{2q_Nf(N)}{N(N-1)}.
	\end{align*}
	Since the right hand side does no longer depend on $\bfhZ_{n-1}^{N}$ it follows that $(Y_n)_{n\geq 0}$ is a sequences of Bernoulli variables with success probability $\frac{2q_Nf(N)}{N(N-1)}$. 
	We only show pairwise independence, that is for $m \neq n$. Analogously it follows that arbitrary finite subsequences of $(Y_n)_{n\geq 0}$ are independent. Thus, consider $m <n$ and see that
	\begin{align*}
		\IP &\left( Y_n = 1, Y_m = 1 \right)\\  
		&=   \sum_{A\subset \Lambda^N\cup E^N} \IP ( Y_n = 1 ,Y_m = 1 | \bfhZ_{n-1}^{N} = A) \mathbb{P} (\bfhZ_{n-1}^{N} = A)\\
		&=   \sum_{A\subset \Lambda^N\cup E^N} \IP ( Y_n = 1|\bfhZ_{n-1}^{N} = A) \IP (  Y_m = 1 | \bfhZ_{n-1}^{N} = A ) \mathbb{P} (\bfhZ_{n-1}^{N} = A)\\
		&=   \tfrac{2q_Nf(N)}{N(N-1)}\sum_{A\subset \Lambda^N\cup E^N}  \IP (  Y_m = 1 | \bfhZ_{n-1}^{N} = A )\mathbb{P} ( \bfhZ_{n-1}^{N} = A)\\
		&=   \IP \left( Y_n = 1\right) \IP \left(Y_m = 1 \right),
	\end{align*}
	where we used in the second equality that given $\bfhZ_{n-1}^{N}$ the random variable $Y_n$ only depends on the uniformly chosen edge at time $n$, $\hV^N_n$ and $\hU^N_n$. Since these three objects are independent
	of everything before time $n-1$ the claimed conditional independence follows.

	Finally note that by definition
	\begin{equation*}
		X^{N,\textnormal{op}}_n\geq Y^f_n \quad\text{ if }\quad \hE_{n-1}^{N,d} \geq f(N).\qedhere
	\end{equation*}
\end{proof}

We continue by introducing some further notation.
For fixed $\kappa\in(0,1)$ let $\varepsilon=(1-\kappa)/2$ and recall $n_2:=n_2(N,\kappa,\varepsilon)$ from Lemma~\ref{lem:ConnectivityAndMinimalDegree}.
Let us denote the event that for all times $n\leq n_2$ the minimal degree in the graph $\bfhG_n^N$ is at least $\kappa N$, where $\kappa\in(0,1)$, by
\begin{equation*}
	B^N(\kappa)
	:= \bigcap_{n\leq n_2} \big\{ \hD_n^{N, \textnormal{min}} \geq \kappa N\big\}.
\end{equation*}
By definition, the dynamical deletion graph $(\Lambda^N,\cE^N_n)$ is a subgraph of $\bfhG_n^N$, and thus Lemma~\ref{lem:ConnectivityAndMinimalDegree} implies that $B^N(\kappa)$ holds with high probability for all $\kappa \in (0,1)$.
Furthermore, on $B^N(\kappa)$ the graph $\bfhG_n^N$ is fully connected for $n\leq n_2$ with high probability.

Recall that the number of opinion updates happening between times $\ell$ and $m$ was
\begin{equation*}
	\hS_{[\ell,m]}^{N, \textnormal{op}} = \sum_{n=\ell}^m X_n^{N, \textnormal{op}}
\end{equation*}
and $\hS_{m}^{N, \textnormal{op}} =\hS_{[1,m]}^{N, \textnormal{op}}$. Recalling that $r=(1-q_N)^{-1/2}$, we define
\begin{equation*}
	f_k(N)= \kappa r_N^{-k} N^2-r_N^{-2k} N^2 .
\end{equation*}
and let $(Y_n^{N, k})_{n \geq 0}$ be the random variables constructed in Lemma~\ref{lem:iidcoupling} with respect to $f_k$.
Furthermore, set
\begin{equation*}
	\hT_{[\ell,m]}^{N, k} \defeq \sum_{n=\ell}^m Y_n^{N, k}
\end{equation*}
and for fixed $\alpha>1$ set $M_N=\lceil r_N^{\alpha} N^2\rceil$.
Note that $\hT_{[r,r+m]}^{N, k} \stackrel{d}{=} \hT_{[1,m]}^{N, k}$. Thus, we write $\hT_{M_N}^{N, k} \defeq \hT_{[1,M_N]}^{N, k}$.

\begin{remark}\label{rem:lower_bound_iid}
	Lemma \ref{lem:iidcoupling} implies that if $\{\hE_m^{N,d} \geq f^k(N) \text{ for all } m \leq n_N\}$ holds with high probability, then with high probability $\hS_{[\ell,n_N]}^{N, \textnormal{op}} \geq \hT_{[\ell,n_N]}^{N, k}$, where $\ell\leq n_N$.
\end{remark}

Recall that $k_0\in \N$ was chosen such that $r_N^{-k}N^2\to 0$ as $N\to \infty$ for all $k>k_0$.
The next lemma provides a useful lower bound for $\widehat T^{N,k}$ and hence for $\widehat S^{N,\textnormal{op}}$.

\begin{lemma}\label{NumberOfJumpsLowerBound}
	Let $c\in (0,1)$. Then it follows for every $k\leq k_0+1$ that
	\begin{equation*}
		\lim_{N \to \infty} \IP \Big(\hT_{M_N}^{N, k} \geq c \kappa r^{\alpha-k}_N N^2 \Big) = 1.
	\end{equation*}
\end{lemma}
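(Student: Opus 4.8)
The plan is to recognise $\hT_{M_N}^{N,k}$ as a genuine (non-delayed) binomial random variable and then conclude by a one-line second-moment estimate. By construction $\hT_{M_N}^{N,k}=\sum_{n=1}^{M_N}Y_n^{N,k}$, where the $Y_n^{N,k}$ are the i.i.d.\ Bernoulli variables furnished by Lemma~\ref{lem:iidcoupling} for the function $f_k$, so that $\IP\bigl(Y_n^{N,k}=1\bigr)=p_{N,k}:=\frac{2q_Nf_k(N)}{N(N-1)}$. For $k\geq 1$ (the only relevant range, since $f_0(N)=(\kappa-1)N^2<0$) we may write $f_k(N)=r_N^{-k}N^2\bigl(\kappa-r_N^{-k}\bigr)$, which is positive and of order $o(N^2)$ because $r_N^{-1}=(1-q_N)^{1/2}\to 0$; in particular $f_k(N)\leq\tfrac{N(N-1)}{2}$ for $N$ large, the hypothesis of Lemma~\ref{lem:iidcoupling} is met, and $\hT_{M_N}^{N,k}\sim\mathrm{Bin}(M_N,p_{N,k})$.

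The first step is to estimate the mean $\mu_N:=\IE\bigl[\hT_{M_N}^{N,k}\bigr]=M_Np_{N,k}$. Using $M_N=\lceil r_N^{\alpha}N^2\rceil\sim r_N^{\alpha}N^2$, $\tfrac{N^2}{N(N-1)}\to 1$, $q_N\to 1$ and $r_N^{-k}\to 0$, one obtains
\begin{equation*}
	\mu_N=\lceil r_N^{\alpha}N^2\rceil\;\frac{2q_N\bigl(\kappa r_N^{-k}-r_N^{-2k}\bigr)N^2}{N(N-1)}\;\sim\;2\kappa\,r_N^{\alpha-k}N^2 .
\end{equation*}
The second step is to check that $\mu_N\to\infty$ for every $1\leq k\leq k_0+1$. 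For $k\leq k_0$ this is immediate from $\liminf_N r_N^{-k}N^2>0$ and $r_N^{\alpha}\to\infty$. For the top level $k=k_0+1$ one invokes the defining property of $k_0$ — equivalently $r_N^{k_0}\in O(N^2)$, hence $r_N=O(N^{2/k_0})$ — together with the standing hypothesis $\alpha>1$; a short case analysis according to whether $\alpha\geq k_0+1$ or $\alpha<k_0+1$ then shows $\mu_N\to\infty$ in this case as well.

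The third step is Chebyshev's inequality. Since $c<1$ and $q_N\to 1$, the ratio of the threshold to the mean satisfies $c\kappa r_N^{\alpha-k}N^2/\mu_N\to c/2<1$, so for $N$ large $c\kappa r_N^{\alpha-k}N^2\leq\bigl(1-\tfrac{c}{3}\bigr)\mu_N$; using moreover $\mathrm{Var}\bigl(\mathrm{Bin}(M_N,p_{N,k})\bigr)=M_Np_{N,k}(1-p_{N,k})\leq\mu_N$ we get
\begin{equation*}
	\IP\Bigl(\hT_{M_N}^{N,k}<c\kappa r_N^{\alpha-k}N^2\Bigr)\leq\IP\Bigl(\bigl|\hT_{M_N}^{N,k}-\mu_N\bigr|>\tfrac{c}{3}\mu_N\Bigr)\leq\frac{\mu_N}{\bigl(\tfrac{c}{3}\mu_N\bigr)^2}=\frac{9}{c^2\,\mu_N}\;\xrightarrow[N\to\infty]{}\;0 ,
\end{equation*}
and passing to complements yields the claim. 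I expect the only genuinely delicate point to be the verification that $\mu_N\to\infty$ at the top level $k=k_0+1$, where the quantitative decay assumption $(1-q_N)\in O(N^{-\delta})$ (which is what makes the choice of $k_0$ available) and the hypothesis $\alpha>1$ must be balanced against each other; everything else — identifying the law of $\hT_{M_N}^{N,k}$ and the concentration bound — is routine.
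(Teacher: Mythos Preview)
Your proof is correct and follows essentially the same outline as the paper's: recognise $\hT_{M_N}^{N,k}$ as a $\mathrm{Bin}\bigl(M_N,\tfrac{2q_Nf_k(N)}{N(N-1)}\bigr)$ random variable, estimate its mean as being of order $\kappa r_N^{\alpha-k}N^2$, and apply a concentration inequality. The only difference is that the paper invokes a Chernoff-type bound (Theorem~2.21 in \cite{van2016random}) yielding an exponential error $\exp\bigl(-\tfrac{(1-c)^2}{2}\kappa r_N^{\alpha-k}N^2\bigr)$, whereas you use Chebyshev; since only convergence to~$1$ is required and not a rate, your more elementary second-moment argument is entirely adequate, and your explicit verification that $\mu_N\to\infty$ at the top level $k=k_0+1$ is in fact more careful than the paper's version, which leaves that step implicit.
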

\begin{proof}
	First we remark that $\hT_{M_N}^{N, k}=\sum_{n=1}^{M_N}Y^{N, k}_n\sim \text{Bin}\big(M_N,\frac{2q_Nf_k(N)}{N(N-1)}\big)$, i.e.\ a binomial random variable with expected value $\frac{2q_Nf_k(N)M_N}{N(N-1)}$. Note that there exists an $N_0\geq 1$ such that $q_N(\kappa-r_N^{-1})\geq \frac{\kappa}{2}$ for all $N\geq N_0$. This implies as well that for any $k\geq 1$ it holds that $q_N(\kappa-r_N^{-k})\geq \frac{\kappa}{2}$ for all $N\geq N_0$. Recalling $M_N=\lceil r_N^\alpha N^2\rceil$, we see for $N\geq N_0$ that 
	\begin{equation*}
		\frac{q_Nf_k(N)M_N}{N(N-1)}
		= \frac{q_N(\kappa-r_N^{-k}) r_N^{-k} N^2 \lceil r_N^\alpha N^2\rceil}{N(N-1)}\geq \tfrac\kappa2 r_N^{\alpha-k}N^2.
	\end{equation*}
	Thus, letting $c\in(0,1)$, it follows for $N$ large enough that
	\begin{align*}
		\IP\Big(\hT_{M_N}^{N, k}\leq c \frac{2q_Nf_k(N)M_N}{N(N-1)}\Big)
		&\leq \exp\Big(-(1-c)^2 \frac{q_Nf_k(N)M_N}{N(N-1)}\Big)\\
		&\leq \exp\Big(-\frac{(1-c)^2}{2}\kappa r_N^{\alpha-k}N^2\Big)
	\end{align*}
	where we used \cite[Theorem 2.21]{van2016random}.
\end{proof}

The next two propositions will be key to the proof of our final result. By induction, we obtain upper bounds on the hitting times $\tau_k$. The induction base is given in the first proposition.

\begin{proposition}\label{prop:Aone}
	Let $\alpha \in (1,2)$. Then it follows that
	\begin{equation*}
		\lim_{N \to \infty} \IP \bigl( \tau_1 \leq r_N^\alpha N^2 \bigr) = 1
	\end{equation*}
\end{proposition}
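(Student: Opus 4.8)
The plan is to bound the complementary event and show $\IP(\tau_1 > M_N) \to 0$, where $M_N = \lceil r_N^{\alpha} N^2\rceil$ is the quantity fixed before the proposition (the discrepancy between $M_N$ and $r_N^\alpha N^2$ is cosmetic and I would ignore it; also, if $\hZ_0^{N,\textnormal{min}} \le r_N^{-1}N$ then $\tau_1 = 0$ and there is nothing to prove, so assume otherwise). Recall that we are in the regime $1-q_N \in O(N^{-\delta})$ of Theorem~\ref{thm:consensus}, so $q_N \to 1$ and $r_N = (1-q_N)^{-1/2}\to\infty$. I would fix a convenient $\kappa \in (\tfrac12,1)$ with $\varepsilon = (1-\kappa)/2$ and work on the event $B^N(\kappa) = \bigcap_{n\le n_2}\{\hD_n^{N,\textnormal{min}} \ge \kappa N\}$, which has probability tending to $1$ by Lemma~\ref{lem:ConnectivityAndMinimalDegree} applied to the subgraph $\cE^N$ of $\bfhG^N$. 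Since $n_2 = n_2(N,\kappa,\varepsilon)$ is of order $r_N^2 N^2$ while $M_N$ is of order $r_N^\alpha N^2$ with $\alpha < 2$, one has $M_N = o(n_2)$, so on $B^N(\kappa)$ the minimal-degree bound $\hD_n^{N,\textnormal{min}} \ge \kappa N$ is in force for all $n \le M_N$ once $N$ is large. I would also fix an arbitrary $c \in (0,1)$ and set $L_N := \lfloor c\kappa r_N^{\alpha-1}N^2\rfloor$, which is $\omega(N^2)$ because $\alpha > 1$.

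The heart of the argument is a deterministic inclusion. Writing $\tilde\tau$ for the first exit time of the \emph{undelayed} opinion walk $m \mapsto \hZ_0^N + Y_m$ (with $Y$ a simple symmetric random walk started at $0$) from the interval $(r_N^{-1}N,\, N - r_N^{-1}N)$, the time-change description of $\hZ^N$ identifies $\tau_1$ as the index of the $\tilde\tau$-th opinion update, so $\{\tau_1 > M_N\} = \{\hS_{M_N}^{N,\textnormal{op}} < \tilde\tau\}$. I would then establish, for $N$ large,
\begin{equation*}
	B^N(\kappa) \cap \{\tau_1 > M_N\} \cap \{\hT_{M_N}^{N,1} \ge L_N\} \ \subseteq\ \{\tilde\tau > L_N\}.
\end{equation*}
On the left-hand event, $\tau_1 > M_N$ forces $\hZ_n^{N,\textnormal{min}} > r_N^{-1}N$ for all $n \le M_N$; combining this with $\hD_n^{N,\textnormal{min}} \ge \kappa N$, the inequality $\hE_n^{N,d} \ge \hZ_n^{N,\textnormal{min}}\bigl(\hD_n^{N,\textnormal{min}} - \hZ_n^{N,\textnormal{min}}\bigr)$, and the elementary fact that the downward parabola $z \mapsto z(\kappa N - z)$ attains its minimum over $z \in (r_N^{-1}N,\, N/2]$ at $z = r_N^{-1}N$ (for large $N$, since $\kappa > \tfrac12$ and $r_N^{-1}\to0$), one gets $\hE_n^{N,d} \ge \kappa r_N^{-1}N^2 - r_N^{-2}N^2 = f_1(N)$ for all $n \le M_N$. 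Lemma~\ref{lem:iidcoupling} with $f = f_1$ then yields $X_n^{N,\textnormal{op}} \ge Y_n^{N,1}$ for all $n \le M_N$, hence $\hS_{M_N}^{N,\textnormal{op}} \ge \hT_{M_N}^{N,1} \ge L_N$; together with $\hS_{M_N}^{N,\textnormal{op}} < \tilde\tau$ this gives $\tilde\tau > L_N$, proving the inclusion.

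With the inclusion in hand I would conclude by a union bound,
\begin{equation*}
	\IP(\tau_1 > M_N) \le \IP\bigl((B^N(\kappa))^c\bigr) + \IP\bigl(\hT_{M_N}^{N,1} < L_N\bigr) + \IP(\tilde\tau > L_N).
\end{equation*}
The first term vanishes as noted. The second vanishes by Lemma~\ref{NumberOfJumpsLowerBound} applied with $k = 1 \le k_0 + 1$ and the chosen $c$ (recalling $L_N \le c\kappa r_N^{\alpha-1}N^2$). For the third, $\tilde\tau$ is dominated by the first hitting time of $\{0,N\}$ by the simple symmetric random walk started at $\hZ_0^N$, i.e.\ by $\tau_N^{1}(\hZ_0^N - \lfloor N/2\rfloor)$ in the notation of Lemma~\ref{lem:AsymptoticsForDifferentScale} (up to shifting the endpoints by at most $1$, which is irrelevant); since $L_N/N^2 \to \infty$, that lemma makes this term vanish. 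Hence $\IP(\tau_1 > M_N) \to 0$, which is the claim.

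The step I expect to require the most care is the passage to the undelayed walk: making precise that $(\hZ_n^N)_n$ is distributed as the time change $(\hZ_0^N + Y_{\hS_n^{N,\textnormal{op}}})_n$ of a genuine simple symmetric random walk $Y$, so that the identity $\{\tau_1 > M_N\} = \{\hS_{M_N}^{N,\textnormal{op}} < \tilde\tau\}$ holds rigorously. This rests on the observation (visible in the delayed construction in Section~\ref{sec:construction}) that the fair coin $\hV_n^N$ is never consulted when deciding whether step $n$ is an opinion update, so the increments accumulated at successive update times form an i.i.d.\ fair $\pm1$ sequence. Everything else — the parabola estimate for $f_1(N)$, the ordering $M_N = o(n_2)$ (which is exactly where $\alpha < 2$ is used) and the growth $L_N = \omega(N^2)$ (where $\alpha > 1$ is used), and the three invocations of Lemmas~\ref{lem:ConnectivityAndMinimalDegree}, \ref{lem:iidcoupling}, \ref{NumberOfJumpsLowerBound} and \ref{lem:AsymptoticsForDifferentScale} — is routine.
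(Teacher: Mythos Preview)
Your proof is correct and follows essentially the same route as the paper's: both arguments work on $B^N(\kappa)$, use that $\tau_1>M_N$ forces $\hZ_n^{N,\textnormal{min}}>r_N^{-1}N$ and hence $\hE_n^{N,d}\ge f_1(N)$, invoke Lemma~\ref{lem:iidcoupling} to dominate $\hS^{N,\textnormal{op}}$ from below by $\hT^{N,1}$, apply Lemma~\ref{NumberOfJumpsLowerBound} to make $\hT^{N,1}_{M_N}$ large w.h.p., and close with Lemma~\ref{lem:AsymptoticsForDifferentScale}. Your presentation is in fact a bit more explicit than the paper's (you spell out the time-change identity $\{\tau_1>M_N\}=\{\hS_{M_N}^{N,\textnormal{op}}<\tilde\tau\}$ and the union bound, and you note the constraint $\kappa>\tfrac12$ needed for the parabola estimate, which the paper uses tacitly).
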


\begin{proof}
	Recall $M_N=\lceil r_N^{\alpha} N^2\rceil$ and that $\hS_{M_N}^{N, \textnormal{op}}=\sum_{n=1}^{M_N}X_n^{N, \textnormal{op}}$ denotes the total number of jumps of the simple random walk associated to the opinions $\hZ_n^N$ made until time step $M_N$. First we see 
	that for any $c > 0$
	\begin{equation}\label{eq:Aone1}
		\IP\big(\{\hS_{M_N}^{N, \textnormal{op}} \geq c \kappa r_N^{\alpha-1} N^{2}\}\cap \{\tau_1 > r_N^\alpha N^2\}\big)\to 0 \quad \text{as} \quad N\to\infty,
	\end{equation}
	which follows as a consequence of Lemma~\ref{lem:AsymptoticsForDifferentScale}. This lemma implies that a simple symmetric random walk started in an arbitrary initial state whose total number of jumps is of order $r_N^{\alpha-1}N^{2} \in \omega(N^2)$ must have hit the boundary $0$ or $N$ with high probability as $N\to \infty$. Therefore, it must have passed by $r_N^{-1}N$ or $N-r_N^{-1}N$.
	
	Next note that if $\tau_1 > r_N^{\alpha} N^2$ then it must hold that $\hZ^{N, \textnormal{min}}_n \geq r_N^{-1} N$ for all $n\leq r_N^{\alpha} N^2$.
	Thus, we can conclude that on the event $B^N(\kappa)\cap \{\tau_1 > r_N^{\alpha} N^2\}$ it holds for all $n\leq r_N^{\alpha} N^2$ that
	$$
	\hE_n^{N,d}
	\geq r_N^{-1}N(\kappa N - r_N^{-1}N)
	= f_1(N).
	$$
	Now, Lemma~\ref{lem:iidcoupling} (and the corresponding Remark~\ref{rem:lower_bound_iid}) implies that
	\begin{align*}
		&\IP\big(\{ \hS_{M_N}^{N, \textnormal{op}} \geq c \kappa r_N^{\alpha-1} N^{2}\}\cap B^N(\kappa)\cap \{\tau_1> r_N^{\alpha} N^2\}\big)\\
		&\quad \geq\IP\big(\{\hT_{M_N}^{N, 1}\geq c \kappa r_N^{\alpha-1} N^{2}\}\cap B^N(\kappa)\cap \{\tau_1>r_N^{\alpha} N^2\}\big).
	\end{align*}
	Additionally, Lemma~\ref{NumberOfJumpsLowerBound} with $k=1$ implies that
	\begin{equation*}
		\IP\big( \hT_{M_N}^{N, 1} \geq c \kappa r_N^{\alpha-1} N^{2}\big)\to 1
	\end{equation*}
	and we know that $\IP(B^N(\kappa))\to 1$. These two facts yield that
	\begin{equation*}
		0
		= \lim_{N\to \infty}\IP\big(\{\hS_{M_N}^{N, \textnormal{op}} \geq c \kappa r_N^{\alpha-1} N^{2}\}\cap \{\tau_1> r_N^{\alpha} N^2 \}\big)
		\geq \limsup_{N\to \infty}\IP\big(\tau_1> r_N^{\alpha} N^2\big).\qedhere
	\end{equation*}
\end{proof}

Continuing the induction we prove a uniform bound on the increments of the $\tau_k$.

\begin{proposition}\label{prop:Ak}
	Let $\alpha \in (1,2)$. Define
	\begin{equation*}
		A_\alpha^N (k) \defeq \bigcap_{\ell=1}^k \{ |\tau_\ell - \tau_{\ell-1} | \leq r_N^\alpha N^2\} .
	\end{equation*}
	Then, for any $k \in \N$, it holds that $\lim_{N \to \infty} \IP (A_\alpha^N (k))=1$.
\end{proposition}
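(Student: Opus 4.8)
The plan is to prove Proposition~\ref{prop:Ak} by induction on $k$, where the case $k=1$ is exactly Proposition~\ref{prop:Aone} (since $\tau_0 = 0$). So assume $\IP(A_\alpha^N(k)) \to 1$ and aim to show $\IP(A_\alpha^N(k+1)) \to 1$. Since $A_\alpha^N(k+1) = A_\alpha^N(k) \cap \{|\tau_{k+1} - \tau_k| \leq r_N^\alpha N^2\}$, it suffices to show that
\begin{equation*}
	\IP\bigl( A_\alpha^N(k) \cap \{\tau_{k+1} - \tau_k > r_N^\alpha N^2\} \bigr) \to 0
\end{equation*}
as $N \to \infty$ (note $\tau_{k+1} \geq \tau_k$ by monotonicity of first hitting times of decreasing levels, so the absolute value is automatic). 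The key point is that on $A_\alpha^N(k)$ we have $\tau_k \leq k \, r_N^\alpha N^2$, so the time window $[\tau_k, \tau_k + M_N]$ with $M_N = \lceil r_N^\alpha N^2 \rceil$ lies within $[0, (k+1) r_N^\alpha N^2]$; since $\alpha < 2$ this is still $o(n_2(N,\kappa,\varepsilon))$ (recall $n_2$ is of order $N^2/(1-q_N) = r_N^2 N^2$, and $\alpha < 2$ makes $r_N^\alpha N^2 = o(r_N^2 N^2)$), so the event $B^N(\kappa)$ — minimal degree at least $\kappa N$ throughout — still applies on this window with high probability.

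The core of the induction step mirrors the proof of Proposition~\ref{prop:Aone}. First, on the event $B^N(\kappa) \cap \{\tau_{k+1} - \tau_k > r_N^\alpha N^2\}$, for every $n \in [\tau_k, \tau_k + r_N^\alpha N^2]$ we have $r_N^{-(k+1)} N \leq \hZ_n^{N,\textnormal{min}} \leq r_N^{-k} N$ (the walk has reached level $k$ but not level $k+1$), hence
\begin{equation*}
	\hE_n^{N,d} \geq \hZ_n^{N,\textnormal{min}}\bigl(\hD_n^{N,\textnormal{min}} - \hZ_n^{N,\textnormal{min}}\bigr) \geq r_N^{-(k+1)} N \bigl(\kappa N - r_N^{-k} N\bigr).
\end{equation*}
Here I should be slightly careful matching this to $f_k(N) = \kappa r_N^{-k} N^2 - r_N^{-2k} N^2 = r_N^{-k} N(\kappa N - r_N^{-k} N)$: the lower bound I get uses $r_N^{-(k+1)}$ in front, which is smaller, so it is dominated by $f_{k+1}(N)$ rather than $f_k(N)$ — so the right object to couple against is $(Y_n^{N,k+1})$ and $\hT^{N,k+1}$, and I invoke Lemma~\ref{lem:iidcoupling}/Remark~\ref{rem:lower_bound_iid} with $f_{k+1}$ on the shifted window. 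By Lemma~\ref{NumberOfJumpsLowerBound} (valid for $k+1 \leq k_0 + 1$; for larger $k$ the relevant level has sublinear — in fact shrinking — size and is reached trivially, see below), $\hT_{[\tau_k, \tau_k + M_N]}^{N,k+1} \geq c\kappa r_N^{\alpha-(k+1)} N^2 \in \omega(N^2)$ with high probability, using stationarity of the increments $\hT_{[\ell,\ell+m]}^{N,k+1} \stackrel{d}{=} \hT_{[1,m]}^{N,k+1}$. Thus with high probability the opinion random walk makes $\omega(N^2)$ jumps during this window; but by Lemma~\ref{lem:AsymptoticsForDifferentScale} a simple symmetric random walk making $\omega(N^2)$ jumps hits $\{0,N\}$ — and in particular passes $r_N^{-(k+1)} N$ or $N - r_N^{-(k+1)} N$ — with high probability, contradicting $\tau_{k+1} - \tau_k > r_N^\alpha N^2$. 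Combining: the probability of $B^N(\kappa) \cap A_\alpha^N(k) \cap \{\tau_{k+1}-\tau_k > r_N^\alpha N^2\}$ tends to $0$, and since $\IP(B^N(\kappa)) \to 1$ and (inductively) $\IP(A_\alpha^N(k)) \to 1$, the claim follows.

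One wrinkle to handle cleanly: for $k \geq k_0 + 1$ the level $r_N^{-k} N \to 0$, so once the walk is below $1$ it is at $0$ (consensus already reached) and $\tau_k = \tau_{k+1} = \cdots = \tau_{k_0+1}$ eventually; more precisely, once $\hZ^{N,\textnormal{min}}_n < 1$ the walk is absorbed, so all $\tau_k$ for $k$ past the absorption level coincide and the increments are $0$, trivially satisfying the bound. So it suffices to run the induction up to $k = k_0 + 1$, and Lemma~\ref{NumberOfJumpsLowerBound} covers exactly that range. The main obstacle is bookkeeping: carefully verifying that the shifted time window $[\tau_k, \tau_k + M_N]$ stays inside the regime $n \leq n_2$ where $B^N(\kappa)$ gives the degree bound (this is where $\alpha < 2$ and the finiteness of the number of levels $k_0$ are essential, since $k_0$ may grow with $N$ — one must check $k_0 r_N^\alpha N^2 = o(r_N^2 N^2)$, i.e.\ $k_0 = o(r_N^{2-\alpha})$, which holds because $r_N^{-k_0} N \gtrsim 1$ forces $k_0 \lesssim \log N / \log r_N$ while $r_N^{2-\alpha} = (1-q_N)^{-(2-\alpha)/2}$ grows polynomially in $N$ under $1-q_N \in O(N^{-\delta})$), and applying the coupling lemma on a shifted window using the stationarity of increments and the strong Markov property at $\tau_k$.
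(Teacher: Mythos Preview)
Your induction scheme and coupling strategy match the paper's, but the induction step contains a genuine gap that the paper's proof addresses with an extra ingredient you are missing.

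First, a minor point: on the window $[\tau_k,\tau_k+M_N]$ you assert $\hZ_n^{N,\textnormal{min}}\le r_N^{-k}N$, justifying it by ``the walk has reached level $k$ but not level $k+1$''. This upper bound is false: after time $\tau_k$ nothing prevents $\hZ^{N,\textnormal{min}}$ from climbing back above $r_N^{-k}N$. Fortunately this does not break the discordant-edge bound, since $z\mapsto z(\kappa N-z)$ is minimised on $[r_N^{-(k+1)}N,\,N/2]$ at the lower endpoint (for $\kappa$ close to $1$), so $\hE_n^{N,d}\ge f_{k+1}(N)$ still holds whenever $\hZ_n^{N,\textnormal{min}}\ge r_N^{-(k+1)}N$. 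But the same oversight is symptomatic of the real problem.

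The fatal error is the claim that $c\kappa r_N^{\alpha-(k+1)}N^2\in\omega(N^2)$. Since $\alpha\in(1,2)$, for every $k\ge1$ the exponent $\alpha-(k+1)$ is negative, so $r_N^{\alpha-(k+1)}N^2=o(N^2)$, and Lemma~\ref{lem:AsymptoticsForDifferentScale} does \emph{not} allow you to conclude that the walk hits $\{0,N\}$ (or even level $k+1$) in the window. In other words, the number of opinion updates you can guarantee on level $k$ is far too small to force the walk across a macroscopic distance; the walk might simply drift back toward the centre and stay there.

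The paper repairs this with a two-step argument you omit. It introduces an intermediate barrier at $r_N^{-(k-\eta)}N$ for some small $\eta\in(0,\alpha/2)$ and shows (i) that $r_N^{\alpha-(k+1)}N^2$ jumps \emph{are} enough for the walk to exit the short interval $[\,r_N^{-(k+1)}N,\,r_N^{-(k-\eta)}N\,]$ w.h.p.\ (since its width is only $\sim r_N^{-(k-\eta)}N$, and $r_N^{\alpha-(k+1)}N^2\in\omega\bigl((r_N^{-(k-\eta)}N)^2\bigr)$ for $\eta<\alpha/2$); and (ii) via a gambler's ruin computation, that the walk exits this interval through the \emph{lower} barrier $r_N^{-(k+1)}N$ with probability tending to $1$, since the starting point $r_N^{-k}N$ sits asymptotically at the bottom of the interval. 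Step (ii) is the missing idea; without it, nothing rules out the walk retreating toward $N/2$ before reaching level $k+1$.

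A side remark: your worry about $k_0$ growing with $N$ is unnecessary here. Under the hypothesis $1-q_N\in O(N^{-\delta})$ of Theorem~\ref{thm:consensus}, one has $k_0=\lceil 2/\delta\rceil$, a fixed constant, so only finitely many induction steps are needed.
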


\begin{proof}
	We prove the statement by induction.
	The case $k=1$ was proved in Proposition~\ref{prop:Aone}. 
	Now we assume that $\lim_{N \to \infty} \IP (A_\alpha^N (k))=1$ holds for some $k$ and show that then also $\lim_{N \to \infty} \IP (A_\alpha^N (k+1))=1$.
	
	Define the following two stopping times 
	\begin{equation*}
		\tau^{\downarrow}_{k}:=\inf \{n\geq 0: \hZ_n^N=\lfloor r_N^{-k}N \rfloor\} \text{ and }
		\tau^{\uparrow}_{k}:=\inf \{n\geq 0: \hZ_n^N= N-\lfloor r_N^{-k}N \rfloor \}
	\end{equation*}
	and see that $\tau_k=\tau^{\downarrow}_{k}\wedge \tau^{\uparrow}_{k}$. Furthermore we fix $\eta\in(0,1\wedge\tfrac\alpha2)$ and define
	\begin{align*}
		\overline{\sigma}^{\uparrow}_{k}&:=\inf\{n>\tau^{\uparrow}_{k}: \hZ_n^N= N-\lfloor r_N^{-(k-\eta)}N\rfloor\}-\tau^{\uparrow}_{k},\\
		\underline{\sigma}_{k}^{\uparrow}&:=\inf\{n>\tau^{\uparrow}_{k}: \hZ_n^N= N-\lfloor r_N^{-(k+1)}N\rfloor\}-\tau^{\uparrow}_{k},\\
		\overline{\sigma}^{\downarrow}_{k}&:=\inf\{n>\tau^{\downarrow}_{k}: \hZ_n^N= \lfloor r_N^{-(k-\eta)}N\rfloor\}-\tau^{\downarrow}_{k}\text{ and }\\
		\underline{\sigma}_{k}^{\downarrow}&:=\inf\{n>\tau^{\downarrow}_{k}: \hZ_n^N= \lfloor r_N^{-(k+1)}N\rfloor\}-\tau^{\downarrow}_{k},
	\end{align*}
	which are not necessarily stopping times.
	
	To give more intuition to this notation, note that we strive to control trajectories to the boundary $\{0,N\}$, either upwards $(\uparrow)$ or downwards $(\downarrow)$. $\tau_k^\bullet$ marks the hitting time of the random walk corresponding to opinion $1$ of the $k$th level in the respective direction, while the $\sigma_k^\bullet$ denote the waiting times from $\tau_k^\bullet$ to the hitting time of the next \emph{good} level closer to the boundary (underline) or the last \emph{bad} level, closer to the center (overline).
	
	We further set
	\begin{align*}
		\overline{\sigma}_{k}:=\overline{\sigma}^{\uparrow}_{k}\1_{\{\tau_k=\tau_k^{\uparrow}\}}+\overline{\sigma}^{\downarrow}_{k}\1_{\{\tau_k=\tau_k^{\downarrow}\}} \text{ and } \underline{\sigma}_{k}:=\underline{\sigma}^{\uparrow}_{k}\1_{\{\tau_k=\tau_k^{\uparrow}\}}+\underline{\sigma}^{\downarrow}_{k}\1_{\{\tau_k=\tau_k^{\downarrow}\}}.
	\end{align*}
	Technically, all of the defined times depend on $N$ as well. However, we leave out the superscript for ease of notation.
	
	Note that $\underline{\sigma}_{k}\leq \overline{\sigma}_{k}$ and $\underline{\sigma}_{k}\wedge \overline{\sigma}_{k}\leq \lceil r_N^{\alpha}N^{2}\rceil$ together imply that $|\tau_{k+1}-\tau_k|\leq \lceil r_N^{\alpha}N^{2}\rceil$, and thus
	\begin{equation*}
		\IP(A_\alpha^N(k+1))\geq \IP\big(\{\underline{\sigma}_{k}\leq \overline{\sigma}_{k},\underline{\sigma}_{k}\wedge \overline{\sigma}_{k}\leq \lceil r_N^{\alpha}N^{2} \rceil\}\cap A_\alpha^N(k)\big).
	\end{equation*}
	Since, by the induction hypothesis, $A_\alpha^N (k)$ holds w.h.p., it suffices to show that the events $\{\underline{\sigma}_{k}\leq \overline{\sigma}_{k}\}, \{\underline{\sigma}_{k}\wedge \overline{\sigma}_{k}\leq \lceil r_N^{\alpha}N^{2} \rceil \}$ hold w.h.p.\ to complete the induction.\\
	
	First we prove $\lim_{N\to \infty}\IP\big(\underline{\sigma}_{k}\wedge \overline{\sigma}_{k}\leq \lceil r_N^{\alpha}N^{2} \rceil\big)= 1$. It holds that $|\tau_{k+1}-\tau_{k}|\geq \underline{\sigma}_{k}\wedge \overline{\sigma}_{k}$, and thus similar as in the proof of Proposition~\ref{prop:Aone} we see that
	\begin{align*}
		\IP\big(\big\{\underline{\sigma}_{k}\wedge \overline{\sigma}_{k}>  \lceil r_N^{\alpha}N^{2}\rceil \big\}\cap  \big\{ \hS_{[\tau_k,M_N+\tau_k]}^{N,\textnormal{op}} 
		\geq c\kappa r_N^{\alpha-(k+1)}N^{2} \big\} \cap B^N(\kappa)\cap A_\alpha^N(k)\big)\to 0
	\end{align*}
	as $N\to 0$, since $r_N^{\alpha-(k+1)}N^{2}$ is of higher order than $ (r_N^{-(k-\eta)}N)^2$, and thus the delayed random walk  $(\hZ_n^N)_{n\geq 0}$ will hit one of the two boundaries with high probability before time step $\lceil r_N^{\alpha}N^{2} \rceil+\tau_k$ is reached. This follows again by Lemma~\ref{lem:AsymptoticsForDifferentScale}.
	
	Note that on the event $A_\alpha^N(k)$ it holds that $\tau_k\leq k  r_N^{\alpha}N^{2}$, since $|\tau_l-\tau_{l-1}|\leq r_N^{\alpha}N^{2}$ for all $l\leq k$. Therefore, we can conclude that
	\begin{equation*}
		\hE^{N,d}_n\geq f_{k+1}(N) \quad\text{on}\quad B^N(\kappa)\cap A_\alpha^N(k)\cap \{\underline{\sigma}_{k}\wedge \overline{\sigma}_{k}>  r_N^{\alpha}N^{2} \}
	\end{equation*}
	for all $\tau_k\leq n\leq \lceil r_N^{\alpha}N^{2} \rceil+\tau_k$. Again by Lemma~\ref{lem:iidcoupling} and Remark~\ref{rem:lower_bound_iid} it holds that
	\begin{align*}
		\IP & \big(\{\underline{\sigma}_{k}\wedge \overline{\sigma}_{k}> \lceil r_N^{\alpha}N^{2}\rceil\}\cap \big\{\hS_{[\tau_k,M_N+\tau_k]}^{N,\textnormal{op}}
		\geq c\kappa r_N^{\alpha-(k+1)}N^{2} \big\}\cap B^N(\kappa)\cap A_\alpha^N(k) \big)\\
		&\geq \IP\big(\{\underline{\sigma}_{k}\wedge \overline{\sigma}_{k}> \lceil r_N^{\alpha}N^{2} \rceil\}\cap \big\{\hT_{[\tau_k,M_N+\tau_k]}^{N, k+1} 
		\geq c\kappa r_N^{\alpha-(k+1)}N^{2} \big\}\cap B^N(\kappa)\cap A_\alpha^N(k) \big).
	\end{align*}
	Since $(Y^{k+1}_l)_{l\in \N}$ are i.i.d.\ by applying Lemma~\ref{NumberOfJumpsLowerBound} we get that
	\begin{equation*}
		\IP\big(\hT_{[\tau_k,M_N+\tau_k]}^{N,k}
		\geq c\kappa r_N^{\alpha-(k+1)}N^{2}  \big)
		= \IP\big(\hT_{M_N}^{N, k+1}
		\geq c\kappa r_N^{\alpha-(k+1)}N^{2}\big)\to 1
	\end{equation*}
	as $N\to \infty$. This fact together with the fact that $\IP(A_\alpha^N(k)\cap B^N(\kappa))\to 1$  implies that
	\begin{equation}\label{AsymptoticBoundForHittingTimes}
		\lim_{N\to \infty}\IP\big(\underline{\sigma}_{k}\wedge \overline{\sigma}_{k}\leq \lceil r_N^{\alpha}N^{2} \rceil\big)= 1 .
	\end{equation}
	
	Now we prove that
	\begin{equation*}
		\lim_{N\to \infty}\IP\big(\underline{\sigma}_{k}\leq \overline{\sigma}_{k},\underline{\sigma}_{k}\wedge \overline{\sigma}_{k}\leq \lceil r_N^{\alpha}N^{2}  \rceil\big)= 1 .
	\end{equation*}
	One problem is that for finite $N$ the waiting time $\underline{\sigma}_{k}\wedge \overline{\sigma}_{k}$ is not necessarily almost surely finite, i.e.\ $(\hZ_n^N)_{n\geq 0}$ might not hit the defined boundaries because the number of discordant edges may reach zero before that. Thus, we need to construct an auxiliary process $(\widetilde{Z}_n^N)_{n\geq 0}$ which will hit the boundaries. 
	To this end let $\theta:=\inf\{n\geq 0: \widehat E^{N,d}_n=0\}$ and let $(Y'_k)_{k\in\N}$ be independent Bernoulli random variables with $\IP(Y'_k=1)=\frac{1}{2}$, then set
	\begin{equation*}
		\widetilde{Z}_n^N:= \hZ_n^N+\sum_{k=1}^{n-\theta}(2Y'_k-1),
	\end{equation*}
	where we use the convention that if $n-\theta\leq 0$ the sum is $0$. This new process $(\widetilde{Z}_n^N)_{n\geq 0}$ will hit the boundaries almost surely. Furthermore, on the event $\{\underline{\sigma}_{k}\wedge \overline{\sigma}_{k}\leq \lceil r_N^{\alpha}N^{2}\rceil\}$ we know for all $n<\underline{\sigma}_{k}\wedge \overline{\sigma}_{k}+\tau_k$ that $\widehat E^{N,d}_n>0$ and thus, on this event $\widetilde{Z}_n^N=\hZ_n^N$. This yields that
	\begin{equation*}
		\begin{aligned}
			&\IP\big(\widetilde{Z}^N_{\underline{\sigma}_{k}\wedge \overline{\sigma}_{k}+\tau_k}\in \{N-\lfloor r_N^{-(k+1)}N\rfloor,\lfloor r_N^{-(k+1)}N\rfloor \},\underline{\sigma}_{k}\wedge \overline{\sigma}_{k}\leq \lceil r_N^{\alpha}N^{2}  \rceil\big)\\
			&\qquad \qquad =\IP\big(\underline{\sigma}_{k}\leq \overline{\sigma}_{k},\underline{\sigma}_{k}\wedge \overline{\sigma}_{k}\leq \lceil r_N^{\alpha}N^{2}  \rceil\big)   
		\end{aligned}
	\end{equation*}
	Using this equality and subadditivity we can see that it holds
	\begin{align*}
		&\IP\big(\underline{\sigma}_{k}\leq \overline{\sigma}_{k},\underline{\sigma}_{k}\wedge \overline{\sigma}_{k}\leq \lceil r_N^{\alpha}N^{2} \rceil\big)\\
		&\qquad\quad \geq  1-\frac{\lfloor r_N^{-k}N\rfloor-\lfloor r_N^{-(k+1)}N\rfloor}{\lfloor r_N^{-(k-\eta)}N\rfloor-\lfloor r_N^{-(k+1)}N\rfloor} - \IP\big(\underline{\sigma}_{k}\wedge \overline{\sigma}_{k}> \lceil r_N^{\alpha}N^{2} \rceil\big),
	\end{align*}
	Now we note that the convergences in \eqref{AsymptoticBoundForHittingTimes} in particular implies that
	\begin{equation*}
		\IP\big(\underline{\sigma}_{k}\wedge \overline{\sigma}_{k}> \lceil r_N^{\alpha}N^{2}\rceil)\to0
	\end{equation*}
	as $N\to \infty$, and therefore we see that
	\begin{equation*}
		\IP\big(\underline{\sigma}_{k}\leq \overline{\sigma}_{k},\underline{\sigma}_{k}\wedge \overline{\sigma}_{k}\leq \lceil r_N^{\alpha}N^{2}  \rceil\big)\to 1.\qedhere
	\end{equation*}    
\end{proof}

\begin{proof}[Proof of Theorem \ref{thm:consensus}]
	Note that for $1-q_N \in \mathcal{O} (N^{-\delta})$ with $\delta > 0$ it holds that
	\begin{equation*}
		r_N^{-k} N = \mathcal{O} \bigl(N^{-\frac{k \delta}{2}}\cdot N \bigr) = o (1)
	\end{equation*}
	for $k \geq k_0 = \big\lceil \frac{2}{\delta} \big\rceil$.  Therefore, for $N$ large enough, $\tau_{k_0}$ coincides with the hitting time of the boundary,
	\begin{equation*}
		\tau_{k_0} \defeq \min \{n \geq 0 : \widehat{Z}_n^N = 0 \text{ or } \widehat{Z}_n^N = N \}.
	\end{equation*}
	By Proposition~\ref{prop:Ak}, we get that with high probability $\tau_{k_0} \leq k_0 r_N^\alpha N^2$ and by Lemma \ref{lem:ConnectivityAndMinimalDegree} we know that the delayed OV-Model is with high probability connected with minimal degree $\kappa N$ until time 
	$\binom N2\frac{\log(1/(\kappa+\varepsilon))}{2(1-q_N)} > \tau_{k_0}$.
	Therefore, with high probability, the voting dynamics reach consensus while the graph is still connected with a minimal degree of at least $\kappa N$.
\end{proof}

\textbf{Acknowledgement.}
We cordially thank Sascha Franck, Marius A. Schmidt and the audience of the Stochastics Seminar at Haus Bergkranz in Riezlern for their valuable remarks. The authors would like to thank the anonymous referees for their detailed comments and suggestions which helped us to improve this article.\\
MS was supported by the German Research Foundation (DFG)  
Project ID: 531542160.\\
RE was supported by the DFG under Project ID: 443227151.

\bibliographystyle{abbrv}
\bibliography{references} 

\begin{thebibliography}{10}

\bibitem{Alon2008}
N.~Alon and J.~H. Spencer.
\newblock {\em The probabilistic method}.
\newblock Wiley Series in Discrete Mathematics and Optimization.
  Wiley-Blackwell, Hoboken, NJ, 3 edition, Aug. 2008.

\bibitem{astoquillca2024stationary}
J.~Astoquillca.
\newblock On the stationary measures of two variants of the voter model.
\newblock {\em arXiv preprint arXiv:2409.16064}, 2024.

\bibitem{basu2017evolving}
R.~Basu and A.~Sly.
\newblock Evolving voter model on dense random graphs.
\newblock {\em Ann. Appl. Probab.}, 27(2):1235--1288, 2017.

\bibitem{gil2006covelution}
S.~Gil and D.~H. Zanette.
\newblock Coevolution of agents and networks: Opinion spreading and community
  disconnection.
\newblock {\em Phys. Lett. A}, 356(2):89--94, 2006.

\bibitem{Grimmett1999}
G.~Grimmett.
\newblock {\em Percolation}, volume 321 of {\em Fundamental Principles of
  Mathematical Sciences}.
\newblock Springer-Verlag, Berlin, second edition, 1999.

\bibitem{holley1975ergodic}
R.~A. Holley and T.~M. Liggett.
\newblock Ergodic theorems for weakly interacting infinite systems and the
  voter model.
\newblock {\em Ann. Probab.}, 3(4):643--663, 1975.

\bibitem{huo2019zealot}
R.~Huo and R.~Durrett.
\newblock The zealot voter model.
\newblock {\em Ann. Appl. Probab.}, 29(5):3128--3154, 2019.

\bibitem{kenke2020probability}
A.~Klenke.
\newblock {\em Probability theory---a comprehensive course}.
\newblock Universitext. Springer, Cham, third edition, 2020.

\bibitem{liggett1994coexistence}
T.~M. Liggett.
\newblock Coexistence in threshold voter models.
\newblock {\em Ann. Probab.}, 22(2):764--802, 1994.

\bibitem{Spitzer1964}
F.~Spitzer.
\newblock {\em Principles of Random Walk}, volume Vol. 34 of {\em Graduate
  Texts in Mathematics}.
\newblock Springer-Verlag, New York-Heidelberg, second edition, 1964.

\bibitem{sturm2008heterozy}
A.~Sturm and J.~Swart.
\newblock Voter models with heterozygosity selection.
\newblock {\em Ann. Appl. Probab.}, 18(1):59--99, 2008.

\bibitem{van2016random}
R.~van~der Hofstad.
\newblock {\em Random graphs and complex networks. {V}ol. 1}, volume~43 of {\em
  Cambridge Series in Statistical and Probabilistic Mathematics}.
\newblock Cambridge University Press, Cambridge, 2017.

\bibitem{zanette2006opinion}
D.~H. Zanette and S.~Gil.
\newblock Opinion spreading and agent segregation on evolving networks.
\newblock {\em Physica D}, 224(1):156--165, 2006.

\end{thebibliography}
\end{document}